\newtheorem{thm}{Theorem}[section]
\newtheorem{prop}[thm]{Proposition}
\newtheorem{lem}[thm]{Lemma}
\newtheorem{cor}[thm]{Corollary}
\newtheorem{prop-def}{thm}[section]
\theoremstyle{definition}
\newtheorem{defn}[thm]{Definition}
\newtheorem{remark}[thm]{Remark}
\newtheorem{exam}[thm]{Example}
\newcommand{\nc}{\newcommand}
 \nc{\mbibitem}[1]{\bibitem{#1}} 
 \nc{\mrm}[1]{{\rm #1}}
\nc{\ac}{\mathrm{\textup{!`}}}
\nc{\bs}{\bar{S}}
\nc{\pl}{\cdot}
 \nc{\la}{\longrightarrow}
\nc{\ot}{\otimes}
 \nc{\rar}{\rightarrow}
 \nc{\btr}{\blacktriangleright}
 \nc{\btl}{\blacktriangleleft}
\nc{\PLA}{{\mathrm{RBS}}}
\nc{\bfk}{{\bf k}}
\nc{\Alg}{\mathrm{Alg}}
\nc{\C}{{\mathrm{C}}}
\nc{\RBA}{{\mathrm{RBA}_\lambda}}
\nc{\RBO}{\mathsf{RBSO}}
\nc{\End}{\mrm{End}}
\nc{\Ext}{\mrm{Ext}}
\nc{\Fil}{\mrm{Fil}}
\nc{\Fr}{\mrm{Fr}}
\nc{\Frob}{\mrm{Frob}}
\nc{\Gal}{\mrm{Gal}}
\nc{\GL}{\mrm{GL}}
\nc{\Hom}{\mrm{Hom}}
\nc{\Hoch}{\mrm{Hoch}}
\nc{\hsr}{\mrm{H}}
\nc{\hpol}{\mrm{HP}}
\nc{\im}{\mrm{im}}
\nc{\Id}{\mrm{Id}}
\nc{\id}{\mrm{Id}}
\nc{\h}{\mrm{H}}
\nc{\Alt}{\mrm{Alt}}
\nc{\Irr}{\mrm{Irr}}
\nc{\incl}{\mrm{incl}}
\nc{\length}{\mrm{length}}
\nc{\NLSW}{\mrm{NLSW}}
\nc{\Lie}{\mrm{Lie}}
\nc{\mchar}{\rm char}
\nc{\mpart}{\mrm{part}}
\nc{\ql}{{\QQ_\ell}}
\nc{\qp}{{\QQ_p}}
\nc{\rank}{\mrm{rank}}
\nc{\rcot}{\mrm{cot}}
\nc{\rdef}{\mrm{def}}
\nc{\rdiv}{{\rm div}}
\nc{\rmH}{ {\mathrm{H}}}
\nc{\rtf}{{\rm tf}}
\nc{\rtor}{{\rm tor}}
\nc{\res}{\mrm{res}}
\nc{\Sh}{{\mathrm{Sh}}}
\nc{\sh}{\mathrm{\overline{Sh}}}
\nc{\SL}{\mrm{SL}}
\nc{\Spec}{\mrm{Spec}}
\nc{\sgn}{{\mathrm{sgn}}}
\nc{\tor}{\mrm{tor}}
\nc{\Tr}{\mrm{Tr}}
\nc{\tr}{\mrm{tr}}
\nc{\wt}{\mrm{wt}}
\nc{\op}{\mrm{op}}
\nc{\s}{\mrm{S}}
\nc{\ra}{\rightarrow}
\nc{\rH}{\mathrm{H}}
\nc{\RBL}{\mathfrak{RBS}}
\nc{\RBLinfty}{{\mathfrak{RBS}_\infty}}
\nc{\Int}{\mathbf{Int}}
\nc{\Lea}{\mathbf{Leaves}}
\nc{\Pare}{\mathbf{Parent}}
\def\lbb{\{\kern -.19em\{}
\def\rbb{\}\kern -.19em\}}
\nc{\BA}{{\mathbb A}}   \nc{\CC}{{\mathbb C}}
\nc{\DD}{{\mathbb D}}   \nc{\EE}{{\mathbb E}}
\nc{\FF}{{\mathbb F}}   \nc{\GG}{{\mathbb G}}
\nc{\HH}{ \mathrm{HH}}   \nc{\LL}{{\mathbb L}}
\nc{\NN}{{\mathbb N}}   \nc{\PP}{{\mathbb P}}
\nc{\QQ}{{\mathbb Q}}   \nc{\RR}{{\mathbb R}}
\nc{\TT}{{\mathbb T}}   \nc{\VV}{{\mathbb V}}
\nc{\ZZ}{{\mathbb Z}}   \nc{\TP}{\widetilde{P}}
\nc{\m}{{\mathbbm m}}
\nc{\cala}{{\mathcal A}}    \nc{\calc}{{\mathcal C}}
\nc{\cald}{\mathcal{D}}     \nc{\cale}{{\mathcal E}}
\nc{\calf}{{\mathcal F}}    \nc{\calg}{{\mathcal G}}
\nc{\calh}{{\mathcal H}}    \nc{\cali}{{\mathcal I}}
\nc{\call}{{\mathcal L}}    \nc{\calm}{{\mathcal M}}
\nc{\caln}{{\mathcal N}}    \nc{\calo}{{\mathcal O}}
\nc{\calp}{{\mathcal P}}    \nc{\calr}{{\mathcal R}}
\nc{\cals}{{\mathcal S}}    \nc{\calt}{{\Omega}}
\nc{\calv}{{\mathcal V}}    \nc{\calw}{{\mathcal W}}
\nc{\calx}{{\mathcal X}}
\nc{\fraka}{{\mathfrak a}}
\nc{\frakb}{\mathfrak{b}}
\nc{\frakg}{{\frak g}}
\nc{\frakl}{{\frak l}}
\nc{\fraks}{{\frak s}}
\nc{\frakt}{{\mathfrak{T}}}
\nc{\frakm}{{\frak m}}
\nc{\frakM}{{\frak M}}
\nc{\frakp}{{\frak p}}
\nc{\frakW}{{\frak W}}
\nc{\frakX}{{\frak X}}
\nc{\frakS}{{\frak S}}
\nc{\frakA}{{\frak A}}
\nc{\frakC}{{\frak{C}}}
\nc{\frakx}{{\frakx}}
\nc{\red}{\color{red}}
\nc{\kai}[1]{\textcolor{blue}{\underline{Kai:}#1 }}
\begin{document}

\title[ From homotopy RB-algebras to pre-CY algebras and double Poisson algebras]{From homotopy Rota-Baxter algebras to Pre-Calabi-Yau and homotopy double Poisson algebras}

\author{Yufei Qin }
\address{Department of Mathematics and Data Science\\ Vrije Universiteit Brussel\\ Pleinlaan 2, 1050 Brussels\\ Belgium  School of Mathematical Sciences\\ Key Laboratory of Mathematics and Engineering Applications (Ministry of Education)\\ Shanghai Key laboratory of PMMP\\	East China Normal University\\	Shanghai 200241,	China}
 \email{Yufei.Qin@vub.be}

\author{Kai Wang}
\address{ School of Mathematical Sciences\\ University of Science and Technology of China\\ Hefei, Anhui Provience 230026, China}
  
   \email{wangkai17@ustc.edu.cn }

\date{\today}

\begin{abstract}

In this paper, we investigate pre-Calabi-Yau algebras and homotopy double Poisson algebras arising from homotopy Rota-Baxter structures. We introduce the notion of cyclic homotopy Rota-Baxter algebras, a class of homotopy Rota-Baxter algebras endowed with additional cyclic symmetry, and present a construction of such structures via a process called cyclic completion. We further introduce the concept of interactive pairs, consisting of two differential graded  algebras—designated as the acting algebra and the base algebra—interacting through compatible module structures. We prove that if the acting algebra carries a suitable cyclic homotopy Rota-Baxter structure, then the base algebra inherits a natural pre-Calabi-Yau structure. Using the correspondence established by Fernández and Herscovich between pre-Calabi-Yau algebras and homotopy double Poisson algebras, we describe the resulting homotopy Poisson structure on the base algebra in terms of homotopy Rota-Baxter algebra structure. In particular, we show that a module over an ultracyclic (resp. cyclic) homotopy Rota-Baxter algebra admits a (resp. cyclic) homotopy double Lie algebra structure.

\end{abstract}

\subjclass[2020]{
	17B38  	
16E45 
17B63  	
18N70  	
14A22 
}

\keywords{  Rota-Baxter algebra; pre-Calabi-Yau algebra; double Poisson algebra. }

\maketitle

\tableofcontents

\allowdisplaybreaks

\section*{Introduction}

The concept of double Poisson algebras was introduced by Van den Bergh, who used it to develop a foundational framework for noncommutative Poisson geometry \cite{VdB08}. He demonstrated that the representation scheme of such an algebra naturally inherits a classical Poisson structure. From this perspective, double Poisson algebras provide a natural and robust setting for formulating noncommutative Poisson geometry, aligning with the Kontsevich-Rosenberg principle.

In a parallel development, Kontsevich, Takeda, and Vlassopoulos \cite{KTV25} introduced the notion of pre-Calabi-Yau algebras (or more generally, pre-Calabi-Yau categories), which can also be considered as a framework for noncommutative Poisson geometry.  Iyudu, Kontsevich, and Vlassopoulos \cite{IKV21} showed that the representation spaces of a certain class of pre-Calabi-Yau algebras naturally carry classical Poisson structures. More generally, Yeung \cite{Yeun22} demonstrated that the derived moduli stack of a pre-Calabi-Yau algebra admits a shifted Poisson structure. Thus, pre-Calabi-Yau algebras offer an equally compelling and versatile framework for developing noncommutative Poisson geometry.

Both double Poisson algebras and pre-Calabi-Yau algebras provide frameworks for noncommutative Poisson geometry, suggesting an intrinsic connection between the two concepts. Iyudu, Kontsevich, and Vlassopoulos established a bijection between double Poisson algebras and  a special type of  pre-Calabi-Yau algebras \cite{IKV21},  this correspondence was later given a conceptual interpretation via higher cyclic Hochschild cohomology in \cite{IK}. Subsequently,  Fern\'andez and Herscovich extended the bijection to differential graded (dg) setting and further  to homotopy double Poisson algebras \cite{FH21}. Specially, they proved that there is a bijection between a particular class of pre-Calabi-Yau algebra (called good manageable special pre-Calabi-Yau algebras) and homotopy double Poisson algebras. Later, they also proved that double quasi-Poisson algebra are also pre-Calabi-Yau algebras \cite{FH22}. Recently, using the methods of properad theory, Leray and Vallette proved the equivalence between curved pre-Calabi-Yau algebras and curved double Poisson algebras by showing that the differential graded Lie algebras governing their deformation theories are quasi-isomorphic \cite{LV}.

The concept of homotopy double Poisson algebra, was introduced by Schedler \cite{Sched09}. In the same work,  he also formulated the associative Yang-Baxter-infinity equation and studied the relationship between homotopy double Poisson algebras and associative Yang-Baxter-infinity equation. In particular, he proved that there is a bijection between the skew-symmetric solutions of associative Yang-Baxter-infinity equation and homotopy double Lie algebras--that is, homotopy double Poisson algebras with the multiplication forgotten.  Leray introduced the concept of protoperads (an analogue of operads) and showed that the protoperad governing double Poisson algebras is Koszul \cite{Leray22, Leray20}, leading to a natural construction of the minimal model of protoperad governing double Poisson algebras, which  generalizes Schedler's homotopy double Poisson algebras.

In this paper, we focus on constructing pre-Calabi-Yau algebras and homotopy double Poisson algebras from representations of homotopy Rota-Baxter algebras.

Rota-Baxter algebras, originally introduced by G. Baxter in the context of probability theory \cite{Bax60}, were later developed by Rota \cite{Rot69}, Cartier \cite{Car72}, and others. This led to the now widely used term “Rota-Baxter algebras.” The theory saw a revival through the work of Guo and collaborators \cite{Agu01, GK00a, GK00b}. Today, Rota-Baxter algebras are connected to numerous areas of mathematics, including combinatorics \cite{Rot69}, renormalization in quantum field theory \cite{CK00}, multiple zeta values in number theory \cite{GZ08}, operad theory \cite{BBGN13}, Hopf algebras \cite{CK00}, and Yang-Baxter equations \cite{Bai07}. For an accessible overview, see Guo’s introduction \cite{Guo09}; for a comprehensive treatment, refer to his monograph \cite{Guo12}. Das and Misha \cite{DM20} studied deformations of relative Rota-Baxter associative algebras and introducing the notion of homotopy relative Rota-Baxter algebras. Building on operadic methods, Wang and Zhou \cite{WZ24} constructed the minimal model of the operad governing Rota-Baxter associative algebras of arbitrary weight. From this, they derived the corresponding $L_\infty$-algebra governing deformations and introduced the concept of homotopy Rota-Baxter algebras of arbitrary weight. The deformation and homotopy theories of Rota-Baxter structures on Lie algebras have also been studied by Tang, Bai, Guo, and Sheng \cite{TBGS19}, as well as by Lazarev, Sheng, and Tang \cite{LST21}.

  In 1983, Semenov-Tian-Shansky \cite{STS83} showed that a solution to the classical Yang-Baxter equation in a Lie algebra induces a Rota-Baxter operator on that Lie algebra. Later, Kupershmidt \cite{Kup99} demonstrated that a skew-symmetric solution yields a relative Rota-Baxter operator. On the associative side, Aguiar introduced the associative Yang-Baxter equation \cite{Agu00b} and showed that its solutions naturally endow associative algebras with Rota-Baxter operators \cite{Agu00a}. Subsequently, Gubarev \cite{Gub21}, and independently Zhang, Gao, and Zheng \cite{ZGZ18p}, established a one-to-one correspondence between solutions of the associative Yang-Baxter equation and Rota-Baxter algebra structures on matrix algebras. Building on this, Goncharov and Kolesnikov \cite{GK18} introduced the notion of a skew-symmetric Rota-Baxter operator, and proved that such operators on $M_n(\mathbf{k})$ are equivalent to double Lie algebra structures on the n-dimensional vector space over $\mathbf{k}$.
  
  These results reveal a deep interplay among Rota-Baxter algebras, double Poisson algebras, and pre-Calabi-Yau algebras. In this paper, we explore these connections within a more general homotopical framework. We introduce the notions of cyclic and ultracyclic homotopy relative Rota-Baxter algebras, where the homotopy Rota-Baxter structures satisfy certain cyclic invariance conditions. These notions generalize the skew-symmetric Rota-Baxter operators studied by Goncharov and Kolesnikov in \cite{GK18}. To investigate the pre-Calabi-Yau and double Poisson structures arising from homotopy Rota-Baxter algebras, we also define the concept of interactive pairs: pairs of differential graded algebras $(A, B)$, referred to as the acting algebra $A$ and the base algebra $B$, which act on each other in a compatible manner. In particular, we consider interactive pairs in which the acting algebra $ A $ is equipped with a suitable homotopy relative Rota–Baxter structure. Such pairs will be called homotopy Rota–Baxter interactive pairs. We then show that  the base algebra $B$ of a homotopy Rota-Baxter naturally acquires a pre-Calabi-Yau algebra structure. More precisely, we establish the following result (see Theorem~\ref{Thm: Pre-Calabi-Yau structure on homotopy RB dg algebras}):
\begin{thm}\label{Theorem:1}
	Let $(A, B)$ be a homotopy Rota-Baxter interactive pair, where the acting algebra $A$ and the base algebra  $B$   are locally finite-dimensional. Let $\{T_n:(A^\vee)^{\otimes n}\rightarrow A\}_{n\geqslant 1}$ be the homotopy relative Rota-Baxter operator.
	\begin{itemize}
		\item[\rm(i)] If each $T_n$ is cyclic, then $B$ admits a good manageable pre-Calabi-Yau algebra structure.
		\item[\rm(ii)] If each $T_n$ is ultracyclic, then $B$ admits a good manageable special pre-Calabi-Yau algebra structure.
	\end{itemize}
\end{thm}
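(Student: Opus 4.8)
The plan is to realize the desired structure on $B$ as a Maurer--Cartan element in the differential graded Lie algebra that controls pre-Calabi--Yau structures, and then to check the three structural adjectives. Recall that a pre-Calabi--Yau structure of the appropriate dimension $d$ on $B$ is a solution $M_B$ of $[M_B,M_B]=0$ in the DGLA $\frakC(B)$ of cyclically invariant multilinear maps on the double $\widehat{B}:=B\oplus B^\vee[d-1]$, equipped with the necklace bracket, whose purely $B$-valued part recovers the given differential graded multiplication $m_B$ on $B$. In this language, ``manageable'' requires each homogeneous component of $M_B$ to be a locally finite sum, ``good'' requires the operations to vanish on the summands excluded by the Fern\'andez--Herscovich dictionary, and ``special'' requires the operations to satisfy the additional invariance that this dictionary isolates as corresponding to homotopy double Poisson algebras. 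Thus the proof divides into (a) writing down the higher operations, (b) verifying the Maurer--Cartan equation, and (c) checking the adjectives.

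For step (a) I would use the interactive pair to transport the Rota--Baxter data onto $\widehat{B}$. The compatible actions of the pair $(A,B)$ endow both $A$ and $A^\vee$ with $B$-bimodule structures, and local finite-dimensionality makes the evaluation pairing $B\ot B^\vee\to\bfk$ and the identifications $(A^\vee)^\vee\cong A$ and $(B^\vee)^\vee\cong B$ available. Each operator $T_n\colon(A^\vee)^{\ot n}\to A$ then assembles into a cyclic operation $\mu_n$ on $\widehat{B}$ by interleaving the two bimodule actions with a single application of $T_n$ and contracting through the pairing: the $B^\vee$-legs of a necklace word are fed into $T_n$, whose output is routed back into $\widehat{B}$, producing the unique ``turnaround'' of the word. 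The cyclicity of each $T_n$ is precisely the invariance needed for $\mu_n$ to descend to the cyclically invariant subspace, so that $M_B:=m_B+\sum_{n\geqslant 1}\mu_n$ lies in $\frakC(B)$.

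For step (b) I would expand $[M_B,M_B]=0$ into its hierarchy of quadratic identities and sort the terms by the number of factors of $T$ they contain. The part with no $T$-factor reduces to the associativity and Leibniz relations already satisfied by $m_B$ and the differential; the part linear in $T$ is governed by the module-compatibility axioms of the interactive pair together with the differential graded structure of $A$; and the part quadratic in $T$ is exactly absorbed by the defining identities of the homotopy relative Rota--Baxter operator $\{T_n\}$. The step I expect to be the main obstacle is reconciling the full cyclic summation built into the necklace bracket with the cyclic-invariance of the $T_n$ and with the Koszul signs introduced by the shift $[d-1]$ and by dualization: one must verify that each cyclic rotation of a necklace word is matched by exactly one instance (or one rotation) of the Rota--Baxter relation, with the signs agreeing. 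I would dispatch this by fixing a normal form for necklace words, rotating every term into that form using the invariance of the $\mu_n$, and then comparing coefficients.

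For step (c), manageability is immediate from local finite-dimensionality, which forces each $\mu_n$ to be a finite sum of elementary operations; goodness holds because, by construction, $\mu_n$ populates only the summands of $\frakC(B)$ permitted by the correspondence. For part (i), the plain cyclicity of each $T_n$ already yields a good manageable pre-Calabi--Yau structure. For part (ii), the stronger ultracyclic hypothesis imposes the extra invariance on the $\mu_n$ that is exactly the specialness condition of Fern\'andez and Herscovich; this is the sole point at which the two cases diverge. Having produced the announced structures, the Fern\'andez--Herscovich correspondence may then be invoked to interpret $M_B$ in terms of a homotopy double Poisson structure.
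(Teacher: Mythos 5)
Your overall strategy coincides with the paper's: both build an explicit cyclic $A_\infty$-structure on the double $\partial_{-1}B=B\oplus s^{-1}B^\vee$ whose odd higher operations are the $T_n$'s contracted through the pairing $\kappa:B\ot B^\vee\to A^\vee$ and the two actions of the interactive pair, verify the structure identities using the homotopy Rota--Baxter relations, and then read off the adjectives; your Maurer--Cartan/necklace-bracket packaging is only a change of language. However, there is a genuine gap in your step (b), precisely in the stratum you describe as ``governed by the module-compatibility axioms of the interactive pair.'' The Stasheff identities on words containing two adjacent copies of $B$ (i.e.\ those in which the nontrivial product $m_2$ of $B$ interacts with an $m_{2n+1}$) do \emph{not} follow from the single compatibility axiom $(b_1\btr a)\rhd b_2=b_1*(a\rhd b_2)$. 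They require, first, that each $T_n$ be an $n$-derivation relative to $B$ (Equation~\eqref{Relation:Leibniz identity for homotopy RB}, which is part of the definition of a homotopy Rota--Baxter interactive pair and handles the case where the product sits in the last slot), and, second, the \emph{strong} derivation identities \eqref{Eq:Cyclic Leibniz identity for homotopy RB at the first componet} and \eqref{Eq:Cyclic Leibniz identity for homotopy RB}, which handle the cases where the product sits in the first or a middle slot. These last identities are not among the hypotheses; the paper derives them from the cyclicity of $T_n$ via Proposition~\ref{Prop: Cyclic Leibniz identity for homotopy RB} (cyclic $n$-derivations are automatically strong). In your write-up cyclicity is used only to descend to the cyclically invariant subspace, so this second, essential use of cyclicity is missing, and without it the quadratic identities involving $m_2$ simply fail.

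Two smaller inaccuracies: your gloss of ``manageable'' as a local-finiteness condition does not match the definition in force here, which is a condition on the explicit form of $m_2$ (it must restrict to the associative product on $B$ and act by the dual bimodule actions on the mixed components); this holds by construction, but not for the reason you give. Also, the sorting of the Maurer--Cartan equation ``by the number of $T$-factors'' is not clean: the defining identity \eqref{Eq: homotopy dg RB-operator} of a dg homotopy relative Rota--Baxter operator itself mixes terms linear in $T$ (the $d\circ T_n$ and $T_n\circ d$ terms) with terms quadratic in $T$, so each Stasheff identity on a fixed word type reduces to one full Rota--Baxter identity rather than to a homogeneous piece of it.
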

Then, using the correspondence between pre-Calabi-Yau algebras and homotopy double Poisson algebras established by Fernández and Herscovich in \cite{FH21}, we describe the induced homotopy double Poisson structures on base algebras in terms of homotopy Rota-Baxter algebra structures. This description is given in an explicit and streamlined form (see Theorem~\ref{Thm:From RB infinity alegbra to double Poisson infinity}):
\begin{thm} \label{Theorem: 2}
	Let $(A, B)$ be a homotopy Rota-Baxter interactive pair, where the acting algebra $A$ is finite-dimensional and the base algebra $B$ is locally finite-dimensional. Let $\{T_n : (A^\vee)^{\otimes n} \to A\}_{n \geqslant 1}$ be a relative differential graded homotopy Rota-Baxter operator on $A$.
	
	Define a sequence of maps $\{\lbb -, \ldots, - \rbb_n : B^{\otimes n} \to B^{\otimes n} \}_{n \geqslant 1}$ by setting $\lbb - \rbb_1 = d_B$, and for all $n \geqslant 1$,
	\begin{equation*}
		\lbb -, \ldots, - \rbb_{n+1} := \Psi^n(\id_{A^{\otimes n}}),
	\end{equation*}
	where the map $\Psi^n$ is the composition:
	\[
	\Psi^n : \End(A^{\otimes n}) \cong A^{\otimes n} \otimes (A^\vee)^{\otimes n} 
	\xrightarrow{\id^{\otimes n} \otimes T_n} A^{\otimes (n+1)} 
	\xrightarrow{\Phi^{\otimes (n+1)}} \End(B)^{\otimes (n+1)} 
	\to \End(B^{\otimes (n+1)}),
	\]
	and $\Phi : A \to \End(B)$ denotes the left $A$-action on $B$, i.e., $\Phi(a)(b) := a \rhd b$.
	
	Then, 
	\begin{itemize}
		\item[\rm (i)]If each $T_n$ is  cyclic, the collection $\{\lbb -, \ldots, - \rbb_n\}_{n \geqslant 1}$ defines a cyclic homotopy double Poisson algebra structure on $B$.
		\item[\rm (ii)] If each $T_n$ is ultracyclic, the collection $\{\lbb -, \ldots, - \rbb_n\}_{n \geqslant 1}$ defines a homotopy double Poisson algebra structure on $B$.
		\end{itemize}
\end{thm}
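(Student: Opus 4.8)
The plan is to obtain Theorem~\ref{Theorem: 2} as a corollary of Theorem~\ref{Theorem:1} together with the Fernández–Herscovich correspondence \cite{FH21}: the pre-Calabi-Yau structure already produced on $B$ is transported across that correspondence to a (cyclic) homotopy double Poisson structure, so that the homotopy double Poisson \emph{axioms} hold for free, and the only genuine content is to identify the resulting brackets with the closed formula $\lbb-,\ldots,-\rbb_{n+1}=\Psi^n(\id_{A^{\otimes n}})$. The finite-dimensionality of $A$ is exactly what makes the canonical isomorphism $\End(A^{\otimes n})\cong A^{\otimes n}\ot (A^\vee)^{\otimes n}$ available, so that $\id_{A^{\otimes n}}$ can be written as the copairing $\sum_I e_I\ot e^I$ and fed into $T_n$, while the local finite-dimensionality of $B$ guarantees that $\Phi^{\otimes(n+1)}$ and the comparison map $\End(B)^{\otimes(n+1)}\to\End(B^{\otimes(n+1)})$ are well behaved in each internal degree.

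First I would recall, from the proof of Theorem~\ref{Theorem:1}, the explicit description of the pre-Calabi-Yau operations on $B\oplus B^\vee[d-1]$: each mixed component is assembled from the action $\Phi$ and the operators $T_n$, the number of $B^\vee$-legs being governed by the arity of $T_n$. Next I would invoke the Fernández–Herscovich dictionary (and its cyclic refinement): a good manageable \emph{special} pre-CY structure corresponds to a genuine homotopy double Poisson structure, whereas a good manageable pre-CY structure corresponds to a \emph{cyclic} homotopy double Poisson structure. This is the step that separates the two cases: by Theorem~\ref{Theorem:1} the ultracyclic hypothesis yields a special pre-CY structure and hence lands in case (ii), while the cyclic hypothesis yields a possibly non-special pre-CY structure and lands in case (i).

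The core computation is to trace the FH extraction map through this explicit pre-CY structure. The recipe produces the arity-$(n+1)$ bracket $\lbb-,\ldots,-\rbb_{n+1}$ by reading off the pre-CY component with $n+1$ inputs in $B$ and $n$ outputs paired against $B^\vee$-legs; under $\Hom(B^{\otimes(n+1)},B^{\otimes(n+1)})\cong\big(B^{\otimes(n+1)}\ot(B^\vee)^{\otimes(n+1)}\big)^\vee$ this component is precisely the image of the copairing $\id_{A^{\otimes n}}$ under $\id^{\otimes n}\ot T_n$ followed by $\Phi^{\otimes(n+1)}$, which is by definition $\Psi^n(\id_{A^{\otimes n}})$. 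I would verify this by evaluating both sides on a generic $b_1\ot\cdots\ot b_{n+1}$ in a fixed basis and matching terms, the identity $\lbb-\rbb_1=d_B$ emerging from the $k=1$ component recording the differential of $B$.

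The hard part will be the bookkeeping of Koszul signs and degree shifts. The pre-CY operations live on the shifted space $B\oplus B^\vee[d-1]$ and are cyclically invariant for a graded-symmetric pairing, whereas the double brackets are stated without shifts; reconciling the two forces one to pin down the sign the correspondence attaches to each reordering of inputs and to each pairing against a $B^\vee$-leg, and to check that these agree with the Koszul signs implicit in $\Phi^{\otimes(n+1)}$ and in the comparison $\End(B)^{\otimes(n+1)}\to\End(B^{\otimes(n+1)})$. A secondary subtlety is confirming that the cyclic symmetry of each $T_n$ transcribes exactly into the cyclic-invariance axiom of a cyclic homotopy double Poisson algebra in case (i); I expect this to collapse to the cyclic invariance already built into the pre-CY operations via Theorem~\ref{Theorem:1}, so that no relation beyond what Theorem~\ref{Theorem:1} and \cite{FH21} supply need be checked.
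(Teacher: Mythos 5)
Your proposal follows essentially the same route as the paper: the paper likewise derives the result by combining Theorem~\ref{Thm: Pre-Calabi-Yau structure on homotopy RB dg algebras} with the Fernández--Herscovich bijections (Theorems~\ref{Theorem: Bijection between ultracyclic pre-Calabi-Yau and homotopy double Poisson} and~\ref{Theorem: Bijection between pre-Calabi-Yau and cyclic homotopy double Poisson}), and the sole computation is expanding $\id_{A^{\otimes n}}$ as the copairing $\sum e_{i_1}\otimes\cdots\otimes e_{i_n}\otimes e^{i_n}\otimes\cdots\otimes e^{i_1}$ and matching $\Psi^n(\id_{A^{\otimes n}})$ against the FH extraction of $m_{2n+1}$, which the paper finds agree up to a uniform sign $(-1)^n$ per arity (harmless for the axioms). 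Your assignment of the special/non-special cases to the two conclusions also matches the paper exactly.
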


%
%
%
%

\smallskip

The paper is organized as follows:
 
 In Section~\ref{Section: Rota-Baxter algebras and double Poisson algebras}, we recall the definitions of Rota-Baxter algebras and double Lie algebras, along with their known connections.

In Section~\ref{ Cyclic homotopy relative Rota-Baxter algebras}, we begin by reviewing cyclic $A_\infty$-algebras and pre-Calabi-Yau structures. Building on this framework, we introduce cyclic Rota-Baxter algebras, as well as cyclic and ultracyclic homotopy relative Rota-Baxter algebras. We also present a cyclic completion construction for homotopy Rota-Baxter algebras.
 
 In Section~\ref{Section: Pre-Calabi-Yau structures arising from cyclic homotopy Rota-Baxter algebras}, we introduce the notion of interactive pairs. We study homotopy Rota-Baxter structures on the acting algebra of such pairs under certain compatibility conditions, leading to the construction of pre-Calabi-Yau structures on the base algebra. This leads to the proof of Theorem~\ref{Theorem:1} (see Theorem~\ref{Thm: Pre-Calabi-Yau structure on homotopy RB dg algebras}). In particular, we prove that a dg module over a dg algebra equipped with a cyclic homotopy relative Rota–Baxter structure naturally carries a pre-Calabi–Yau algebra structure.
 
 In Section~\ref{Section:Homotopy   Rota-Baxter algebras and double  Poisson structures}, we recall the definitions of homotopy double Lie algebras and homotopy double Poisson algebras. We generalize the correspondence between pre-Calabi-Yau algebras and homotopy double Poisson algebras established by Fernández and Herscovich. Using the constructions from Section~\ref{Section: Pre-Calabi-Yau structures arising from cyclic homotopy Rota-Baxter algebras}, we prove Theorem~\ref{Theorem: 2} (see Theorem~\ref{Thm:From RB infinity alegbra to double Poisson infinity}). As a special case, we show that a dg module over an ultracyclic homotopy relative Rota–Baxter algebra naturally inherits a homotopy double Lie algebra structure. Moreover, we prove that the symmetric algebra of a homotopy double Lie algebra naturally carries a homotopy Poisson algebra structure. This yields a method for constructing homotopy Poisson structures from representations of dg homotopy Rota–Baxter algebras. As an application, we establish an equivalence between skew-symmetric solutions of the associative Yang–Baxter-infinity equations, ultracyclic homotopy Rota–Baxter algebra structures, a certain class of pre-Calabi–Yau algebras, and homotopy double Lie algebras, thus extending the results of Goncharov and Kolesnikov to the homotopical realm.

\newpage

\section{Preliminaries}\

\subsection{Notations} \ 

Let $\bf{k}$ be a field of characteristic $0$.
A (homologically) { graded space} is a $\mathbb{Z}$-indexed family of $\bf{k}$-vector spaces \( V = \{V_n\}_{n \in \mathbb{Z}} \). Elements of \( \bigcup_{n \in \mathbb{Z}} V_n \) are called homogeneous and have a degree \( |v| = n \) if \( v \in V_n \).

Given two graded spaces \( V \) and \( W \), a graded map of degree \( r \) is a linear map \( f: V \to W \) such that \( f(V_n) \subseteq W_{n+r} \) for all \( n \), and we denote the degree of \( f \) by \( |f| = r \). Define 
\[
\mathrm{Hom}(V, W)_r = \prod_{p \in \mathbb{Z}} \Hom_\mathbf{k}(V_p, W_{p+r})
\] 
as the space of graded maps of degree \( r \). The graded space \( \mathrm{Hom}(V, W) \) is then given by \( \{\mathrm{Hom}(V, W)_r\}_{r \in \mathbb{Z}} \).

The tensor product \( V \otimes W \) of two graded spaces \( V \) and \( W \) is defined by  
\[
(V \otimes W)_n = \bigoplus_{p+q=n} V_p \otimes W_q.
\]
We adopt Sweedler’s notation for elements in tensor products of graded spaces. Let \( V^1 \otimes \cdots \otimes V^n \) be the tensor product of graded spaces \( V^1, \cdots, V^n \). An element \( r \) in this tensor product can be expressed as 
\[
r = \sum_{i_1, \cdots, i_n} r^{[1]}_{i_1} \otimes \cdots \otimes r^{[n]}_{i_n},
\] 
where \( r^{[k]}_{i_k} \in V^k \). For simplicity, we omit the subscripts \( i_k \) and write:
\[
r = \sum r^{[1]} \otimes \cdots \otimes r^{[n]}.
\]
If \( V \) is a finite-dimensional graded space, there is an isomorphism of graded spaces:
\[
\mathrm{Hom}(V, W) \cong W \otimes V^\vee.
\]
Moreover, if both \( V \) and \( W \) are finite-dimensional graded spaces, we have the isomorphism:
\[
\End(V \otimes W) \cong V \otimes W \otimes W^\vee \otimes V^\vee.
\]

The suspension of a graded space \( V \) is the graded space \( sV \), defined by \( (sV)_n = V_{n-1} \) for all \( n \in \mathbb{Z} \). For any \( v \in V_{n-1} \), we denote the corresponding element in \( (sV)_n \) by \( sv \). The map \( s: V \to sV \), defined by \( v \mapsto sv \), is a graded map of degree \( 1 \).

Similarly, the desuspension of \( V \), denoted \( s^{-1}V \), is defined by \( (s^{-1}V)_n = V_{n+1} \). For \( v \in V_{n+1} \), the corresponding element in \( (s^{-1}V)_n \) is written as \( s^{-1}v \). The map \( s^{-1}: V \to s^{-1}V \), given by \( v \mapsto s^{-1}v \), is a graded map of degree \( -1 \).

To determine signs in expressions involving graded objects, we use the Koszul sign rule, which states that exchanging the positions of two graded elements introduces a factor of \( (-1)^{|a||b|} \), where \( |a| \) and \( |b| \) are their respective degrees.

Let \( \mathfrak{S}_n \) denote the symmetric group on \( n \) elements, and let \( V \) be a graded space. The left action of \( \mathfrak{S}_n \) on \( V^{\otimes n} \) is defined as follows: for \( \sigma \in \mathfrak{S}_n \) and any \(r= \sum r^{[1]} \otimes \cdots \otimes r^{[n]} \in V^{\otimes n} \),
\[
\sigma  \cdot r  = \sum \varepsilon(\sigma; r^{[1]}, \dots, r^{[n]}) r^{[\sigma^{-1}(1)]} \otimes \cdots \otimes r^{[\sigma^{-1}(n)]},
\]
where \( \varepsilon(\sigma; r^{[1]}, \dots, r^{[n]}) \) is the Koszul sign obtained from permuting the graded elements \( r^{[1]}, \dots, r^{[n]} \). We write $
\sigma^{-1}\cdot  r  \text{ as } r^{\sigma(1), \dots, \sigma(n)}.$

For \( 0 \leqslant i_1, \dots, i_r \leqslant n \) with \( i_1 + \cdots + i_r = n \),  let \( \Sh(i_1, i_2, \dots, i_r) \) denote the set of  \((i_1, \dots, i_r)\)-shuffles, i.e., permutations \( \sigma \in \mathfrak{S}_n \) such that:
\[
\sigma(1) <   \cdots <   \sigma(i_1),
\sigma(i_1+1) <   \cdots <   \sigma(i_1 + i_2),  \dots, 
\sigma(i_1 + \cdots + i_{r-1} + 1) <   \cdots <   \sigma(n).
\]

\medskip

\section{Rota-Baxter algebras and double Poisson algebras}\label{Section: Rota-Baxter algebras and double Poisson algebras}

\subsection{ Rota-Baxter algebras, double Lie algebras and Yang-Baxter Equations }\

In this section, we will first recall some basic notions on Rota-Baxter algebras, double Lie algebras and associative Yang-Baxter equations. Then we will recall the connections among these three objects introduced by Schedler \cite{Sched09}, Goncharov and Kolesnikov \cite{GK18}. 

\begin{defn}\label{Def: Rota-Baxter   algebra}
Let \( (A, \mu = \cdot) \) be an associative algebra over a field \( \mathbf{k} \), and let \( M \) be a bimodule over \( A \). A linear operator \( T: M \to A \) is called a \textbf{relative Rota-Baxter operator on \( M \)} if it satisfies the following relation:
\begin{equation}\label{Eq: Rota-Baxter relation}
	T(a) \cdot T(b) = T\big(a\cdot  T(b) + T(a)\cdot  b\big),
\end{equation}
for all \( a, b \in A \). In this case, the triple \( (A, M, T) \) is called a \textbf{relative Rota-Baxter algebra}.

In particular, if we take \( M = A \), then \( T \) is simply called a \textbf{Rota-Baxter operator}, and \( (A, \cdot, T) \) is called a \textbf{Rota-Baxter algebra}.
	
\end{defn}

\begin{defn}\label{Def:Double Lie alg}\cite{Sched09,VdB08}
	A {\bf double Lie algebra} is a linear space $V$ equipped with a linear map $$ \lbb-, -\rbb:V\otimes V\rightarrow V\otimes V$$ satisfying the following identities	 for all $a,b,c\in V$
	\begin{itemize}
		\item [(i)] Skew-symmetry:
		\begin{equation}\label{Eq: antisymmetric condition of double Lie system}
			\lbb a, b\rbb=-\sigma_{(12)}\lbb b, a\rbb;
		\end{equation}
		\item [(ii)]  Double Jacobi identity:
		\begin{equation}\label{Eq: cyclically condition of double Lie system}
			\lbb -,\lbb -, -\rbb\rbb_L+\sigma_{(123)}\lbb -,\lbb -, -\rbb\rbb_L\sigma_{(123)}^{-1}+\sigma_{(123)}^2\lbb -,\lbb -, -\rbb\rbb_L\sigma_{(123)}^{-2}=0.
		\end{equation}
		where $\lbb-,-\rbb_L(x_1\otimes x_2\otimes x_3):=\lbb x_1, x_2\rbb\otimes x_3.$
	\end{itemize}

\end{defn}

\begin{defn}\label{Def:Double Poisson alg}\cite{VdB08}
	A {\bf double Poisson algebra} is an associative algebra $(A,\cdot)$ equipped with a double Lie algebra structure $\lbb-,-\rbb$ satisfying the  Leibniz rule: for all $a,b,c\in A$
	\begin{eqnarray}
		\lbb a,b\cdot c\rbb =  \lbb a,b\rbb\cdot c+ b\cdot\lbb a,c\rbb,
	\end{eqnarray}
	where
	\[\lbb a,b\rbb \cdot c= \lbb a,b\rbb ^{[1]} \otimes (\lbb a,b\rbb^{[2]}\cdot c), \]
	\[b\cdot\lbb a,c\rbb =(b\cdot \lbb a,c\rbb^{[1]})\otimes   \lbb a,c\rbb^{[2]}.\]
\end{defn}

\smallskip

 Goncharov and Kolesnikov \cite{GK18} proved that double Lie algebra structures on a finite-dimensional vector space V are equivalent to cyclic Rota-Baxter operators (referred to as a skew-symmetric Rota-Baxter operators in their work) on the associative algebra $\mathrm{End}(V)$. We briefly recall this correspondence below. 
 
For a finite-dimensional vector space \( V \), there is a natural nondegenerate bilinear form $\langle  -,-\rangle $ on $\End(V)$ which is given as: \[\langle  f,g\rangle :=\mathrm{tr}(f\circ g), \forall f,g\in \End(V).\] Thus we have an isomorphism \[\End(V)\cong \End(V)^\vee,\] which induces the following isomorphisms:
\[
\End(V \otimes V) \cong \End(V) \otimes \End(V) \cong \End(V)\ot \End(V)^\vee \cong \End(\End(V)).
\]
In this way, any double bracket \[ \lbb -,-\rbb : V \otimes V \to V \otimes V \] can be uniquely determined by a linear operator \[ T : \End(V) \to \End(V).\]
Conversely, given a linear operator \( T \) on \( \End(V) \), the corresponding bracket \( \lbb -,-\rbb : V \otimes V \to V \otimes V \) can be expressed in terms of \( T \) as follows:
\begin{equation}\label{double bracket}
	\lbb a, b \rbb = \sum_{i=1}^N T^\vee(e^i)(a) \otimes e_i(b) = \sum_{i=1}^N e^i(a) \otimes T(e_i)(b), \quad a, b \in V,
\end{equation}
where $ \{e_1, \cdots, e_N\}$ is a basis of \( \End(V) \), and $\{e^1, \cdots, e^N \}$ is the corresponding dual basis with respect to the trace form, i.e., $\langle  e^i, e_j\rangle =\delta^i_j$. Here, \( T^\vee\) denotes the adjoint (or conjugate) operator of \( T \) on \( \End(V) \) with respect to the trace form. 

Goncharov and Kolesnikov proved that the bracket $\lbb-,-\rbb$ defines a double Lie algebra structure if and only if the operator $T$ is a cyclic Rota-Baxter operator  on $\End(V)$, that is, $T$ is a Rota-Baxter operator satisfying $T = -T^\vee$.

On the other hand, Schedler \cite{Sched09} established a correspondence between skew-symmetric solutions of the associative Yang-Baxter equation (AYBE) in $\End(V)$ and double Lie algebra structures on $V$.

\begin{defn}\cite{Agu00b}
	Let $A$ be a unital associative algebra. An element $r = \sum_i a_i \otimes b_i \in A \otimes A$ is called a solution to the \textbf{associative Yang-Baxter equation (AYBE)} in $A$ if
	\[
	\operatorname{AYBE}(r) := r_{12} \cdot r_{13} - r_{23} \cdot r_{12} + r_{13} \cdot r_{23} = 0 
	\]
	in $A \otimes A \otimes A$, where the tensors $r_{12}$, $r_{13}$, and $r_{23}$ are given by
	\[
	r_{12} = \sum_i a_i \otimes b_i \otimes 1,\quad 
	r_{13} = \sum_i a_i \otimes 1 \otimes b_i,\quad 
	r_{23} = \sum_i 1 \otimes a_i \otimes b_i.
	\]
	A solution $r$ is said to be \emph{skew-symmetric} if $r = -r^{21}$, where $r^{21} = \sum_i b_i \otimes a_i$.
\end{defn}

Now let $A = \End(V)$ for a vector space $V$. Then there is a canonical isomorphism
\[
\End(V) \otimes \End(V) \cong \End(V \otimes V),
\]
under which each element $r = \sum_i a_i \otimes b_i$ corresponds to a unique bilinear operation
\[
\lbb - , - \rbb : \End(V) \otimes \End(V) \rightarrow \End(V) \otimes \End(V).
\]
Schedler proved that an element $r \in \End(V) \otimes \End(V)$ is a skew-symmetric solution of the associative Yang-Baxter equation in $\End(V)$ if and only if the associated double bracket $\lbb - , - \rbb$ defines a double Lie algebra structure on $V$.

In summary, we have the following equivalence:

\begin{thm}\cite{GK18,Sched09}
	Let $V$ be a finite-dimensional vector space. The following data are equivalent:
	\begin{itemize}\label{RBS=Double Lie system}
		\item[\rm (i)] A double Lie algebra structure $\lbb - , - \rbb$ on $V$.
		
		\item[\rm (ii)] A linear operator $T : \End(V) \rightarrow \End(V)$ that is a Rota-Baxter operator with respect to composition (i.e., on $(\End(V), \circ)$), and is cyclic, meaning $T^\vee = -T$.
		
		\item[\rm (iii)] A skew-symmetric solution $r \in \End(V \otimes V)$ of the associative Yang-Baxter equation, i.e., $r = -r^{21}$ and $\operatorname{AYBE}(r) = 0$.
	\end{itemize}
\end{thm}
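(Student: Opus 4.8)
The plan is to realize all three structures as different incarnations of one and the same linear datum and then to match the three defining axioms one layer at a time. First I would fix a basis $\{e_1,\dots,e_N\}$ of $\End(V)$ together with its trace-dual basis $\{e^1,\dots,e^N\}$ (so $\langle e^i,e_j\rangle=\delta^i_j$), and make the chain of isomorphisms
\[
\End(V\otimes V)\cong \End(V)\otimes\End(V)\cong \End(V)\otimes\End(V)^\vee\cong\End(\End(V))
\]
explicit, so that it identifies a double bracket $\lbb-,-\rbb$, a tensor $r\in\End(V)\otimes\End(V)$, and an operator $T\in\End(\End(V))$. Equation~\eqref{double bracket} already records the dictionary between $\lbb-,-\rbb$ and $T$; I would complement it by reading off the tensor directly from that formula as $r=\sum_i e^i\otimes T(e_i)$, and by checking that, under these identifications, the flip $r\mapsto r^{21}$ corresponds exactly to passage to the adjoint $T\mapsto T^\vee$ with respect to the trace form. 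Once the dictionary is pinned down, the theorem becomes a matter of matching axioms, and I would route everything through the associative Yang--Baxter equation as a hub, proving (i)$\Leftrightarrow$(iii) and (ii)$\Leftrightarrow$(iii).

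The skew-symmetry layer is the routine one. I would show that the three conditions $\lbb a,b\rbb=-\sigma_{(12)}\lbb b,a\rbb$, $r=-r^{21}$, and $T=-T^\vee$ are mutually equivalent. The equivalence $r=-r^{21}\Leftrightarrow T=-T^\vee$ is immediate from the previous paragraph. For the remaining one, I would observe that the axiom~\eqref{Eq: antisymmetric condition of double Lie system} is precisely the statement that the operator $\beta\in\End(V\otimes V)$ defined by $\lbb-,-\rbb$ satisfies $\beta=-\sigma_{(12)}\circ\beta\circ\sigma_{(12)}$, and that conjugation by the swap on $\End(V\otimes V)$ transports to the flip $r\mapsto r^{21}$ on $\End(V)\otimes\End(V)$ (for an elementary tensor, $\sigma_{(12)}\circ(f\otimes g)\circ\sigma_{(12)}=g\otimes f$). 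No signs intervene since $V$ is an ordinary, ungraded vector space.

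The main work is the relation layer: assuming skew-symmetry, I must show that the double Jacobi identity~\eqref{Eq: cyclically condition of double Lie system}, the equation $\operatorname{AYBE}(r)=0$, and the Rota--Baxter identity~\eqref{Eq: Rota-Baxter relation} for $T$ on $(\End(V),\circ)$ are equivalent. For (i)$\Leftrightarrow$(iii), I would expand $\lbb-,\lbb-,-\rbb\rbb_L$ using~\eqref{double bracket}, conjugate by $\sigma_{(123)}$ and its square, and sum the three cyclic terms; transporting the expansion of $\operatorname{AYBE}(r)=r_{12}r_{13}-r_{23}r_{12}+r_{13}r_{23}$ into $\End(V^{\otimes 3})$ then exhibits a term-by-term correspondence, the three cyclic summands of the Jacobi identity matching $r_{12}r_{13}$, $-r_{23}r_{12}$, and $r_{13}r_{23}$, so that the two expressions vanish simultaneously. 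For (ii)$\Leftrightarrow$(iii), I would pair $\operatorname{AYBE}(r)=0$ against a test element in the third slot via the trace form and use the completeness relations of the dual bases to convert products $\sum_i a_i\,x\,b_i$ into expressions in $T$; the three tensor legs $r_{12},r_{13},r_{23}$ then turn into the three occurrences of $T$ in $T(a)\circ T(b)=T\big(a\circ T(b)+T(a)\circ b\big)$.

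The principal obstacle is exactly this last layer: the index and tensor-leg bookkeeping required to move the double Jacobi identity across the isomorphisms and to read off the Rota--Baxter relation from $\operatorname{AYBE}(r)=0$. The conjugations $\sigma_{(123)}^{\pm1}$ reshuffle the three tensor slots, and keeping track of which basis vectors $e_i,e^i$ land in which slot—while repeatedly collapsing sums through the trace-form completeness identities—is where the genuine computation lies, even though there are no Koszul signs to manage. Since the statement is due to Goncharov--Kolesnikov and to Schedler, I would organize the argument to recover their two pairwise correspondences and then compose them through the $\operatorname{AYBE}$ hub.
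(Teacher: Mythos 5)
Your proposal is correct in outline, but note that the paper itself offers no proof of this statement: it is quoted from \cite{GK18,Sched09}, and the surrounding text merely assembles the two cited pairwise correspondences, with the double Lie algebra structure (i) serving as the hub --- Goncharov--Kolesnikov give (i)$\Leftrightarrow$(ii) and Schedler gives (i)$\Leftrightarrow$(iii). You instead take (iii) as the hub and prove (ii)$\Leftrightarrow$(iii) directly; this is the Aguiar/Gubarev-type computation (pair the third tensor leg of $\operatorname{AYBE}(r)$ against a test element via the trace form, use $\langle xy,z\rangle=\langle y,zx\rangle$ to collapse the sums), and it does go through, with the one point worth flagging being that skew-symmetry of $r$ is genuinely needed to convert the $r_{13}r_{23}$ term into $-T(T(y)\circ x)$ --- without $r=-r^{21}$ one only gets the Rota--Baxter identity for the pair $(T,-T^\vee)$ rather than for $T$ alone. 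Your identification of the flip $r\mapsto r^{21}$ with the adjoint $T\mapsto T^\vee$ is right, though you should reconcile your convention $r=\sum_i e^i\otimes T(e_i)$ with the paper's Equation~\eqref{double bracket}, which writes the same bracket in two ways (one leg carrying $T$, the other $T^\vee$); consistency here is exactly what makes the skew-symmetry layer tautological. What your organization buys is a self-contained proof in which the hardest index bookkeeping (the double Jacobi identity versus $\operatorname{AYBE}$) is isolated in a single leg, at the cost of redoing computations that the paper simply outsources to the references.
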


\bigskip

\section{ Pre-Calabi-Yau algebras and homotopy Rota-Baxter algebras}\label{ Cyclic homotopy relative Rota-Baxter algebras}

In this section, we begin by recalling key concepts related to pre-Calabi-Yau algebras, following the work of Fern\'andez and Herscovich \cite{FH21}, including cyclic $A_\infty$-algebras, good, manageable, and special pre-Calabi-Yau algebras. We then review the notions of homotopy Rota-Baxter algebras and homotopy relative Rota-Baxter algebras. Building on these, we introduce the concepts of absolute and relative cyclic and ultracyclic homotopy relative Rota-Baxter algebras—homotopy Rota-Baxter structures that satisfy certain cyclic invariance conditions. These structures will play a central role in the remainder of the paper. Finally, we present a construction method for cyclic homotopy Rota-Baxter algebras, referred to as cyclic completion.

	
	\subsection{Cyclic $A_\infty$-algebras and Pre-Calabi-Yau algebras}\

	We first recall some basics on $A_\infty$-algebras and  $A_\infty$-bimodules. 
	
	\begin{defn}
		Let  $A=\oplus_{n \in \mathbb{Z}} A_n$ be  a graded vector space. If $A$ is  equipped with a family of homogeneous
		linear maps $\{m_n:A^{\otimes n}\to A\}_{n\geqslant1}$, with $|m_n|=n-2$ satisfying the Stasheff identity: for all $n\geqslant1$,
		\begin{eqnarray}\label{Eq: stasheff-id}
			\sum\limits_{    i+j+k= n,\atop
				i, k\geqslant 0, j\geqslant 1 } (-1)^{i+jk}m_{i+1+k}\circ\Big(\id^{\ot i}\ot m_j\ot \id^{\ot k}\Big)=0,
		\end{eqnarray}
		then $(A,\{m_n\}_{n\geqslant1})$ is called an {\bf $A_\infty$-algebra}.
	\end{defn}

	
	\begin{defn}
		Let $(A,\{m_n\}_{n\geqslant1})$ be an $A_\infty$-algebra. An {\bf $A_\infty$-bimodule} over $A$ is a graded space $M=\bigoplus\limits_{n \in \mathbb{Z}} M_n$ equipped with a family of homogeneous maps $\{m_{p,q}:A^{\ot p}\ot M\ot A^{\ot q}\to M\}_{p,q\geq0}$ with $|m_{p,q}|=p+q-1$ satisfying: for all $p,q\geqslant 0 $,	
		\begin{align}\label{Eq: A-infty-bimodule}
			 &\sum\limits\limits_{1\leqslant j\leqslant p\atop 0\leqslant i\leqslant p-j} (-1)^{i+j(p-i-j+1+q)}m_{p-j+1,q}\circ\Big(\id^{\ot i}\ot m_j\ot \id^{\ot p-i-j}\ot \id_M\ot  \id^{\ot q}\Big)\\
			\nonumber &+\sum\limits\limits_{i+r=p,s+k=q\atop i,r,s,k\geqslant 0} (-1)^{i+(r+s-1)k+1}m_{i+1,k+1}\circ\Big(\id^{\ot i}\ot m_{r,s}\ot \id^{\ot k}\Big)\\
			\nonumber &+\sum\limits\limits_{ 1\leqslant j\leqslant q\atop 0\leqslant i\leqslant q-j } (-1)^{p+i+1+j(q-i-j)}m_{p,q-j+1}\circ\Big(\id^{\ot p}\ot \id_M\ot \id^{\ot i}\ot m_j\ot \id^{\ot q-i-j}\Big)\nonumber\\
		\nonumber	&=0.
		\end{align}
		
	\end{defn}

	\begin{defn}Let $d$ be an integer. Let $A$ be a graded space and $\gamma: A\times A \rightarrow \bf{k}$ be a graded symmetric bilinear form of degree $-d$ .
		An operation $m_n: A^{\ot n}\rightarrow A$ is called {\bf$d$-cyclic} with respect to $\gamma$ if it satisfies 
		$$
		\gamma\left(m_n\left(a_1\otimes \cdots\otimes a_n\right), a_0\right)=(-1)^{n+\left|a_0\right|(\sum\limits_{i=1}^n\left|a_i\right|)} \gamma\left(m_n\left(a_0\otimes\cdots\otimes  a_{n-1}\right), a_n\right),
		$$
		for all homogeneous elements $a_0, \cdots, a_n \in A$. 
	\end{defn}
	
	\begin{defn}
		Let $d \in \mathbb{Z}$. A \textbf{$d$-cyclic $A_\infty$-algebra} is an $A_\infty$-algebra $(A, \{m_n\}_{n \geqslant 1})$ equipped with a graded symmetric, nondegenerate bilinear form $\gamma: A \times A \to \bfk$, such that each $m_n$ is $d$-cyclic with respect to $\gamma$.
	\end{defn}
	
%
%
%
	
	\begin{remark}Actually, a $d$-cyclic $A_\infty$-structure on $A_\infty$-algebra $A$ is equivalent to a strict $A_\infty$-bimodule isomorphism from $A$ to $s^{d}A^\vee$.	\end{remark}

	Let $d\in \mathbb{Z}$.  Set \[\partial_d A=A \oplus  s^{d}A^{\vee} .\]  There is a natural bilinear form \[\zeta_A: \partial_d A\times \partial_d A\rightarrow \bf{k}\] of degree $-d$ on $\partial_d A$ defined as follows:
	$$
	\zeta_A(s^{d} f, a)=(-1)^{|a|(|f|+d)} \zeta_A(a, s^{d}f)=f(a), \ \  \zeta_A(a, b)=\zeta_A(s^{d} f, s^{d} g)=0,
	$$
	for all homogeneous $a, b \in A$ and $f, g \in A^{\vee}$. Note that $\zeta_A$ has degree $-d$. Moreover, if $A$ is an $A_\infty$-algebra, then $\partial_dA$ has a natural $A_\infty$-algebra structure, i.e., the trivial extension $A_\infty$-algebra.
	
	\begin{prop}Let $A$ be an $A_\infty$-algebra. Then $\partial_d A$ is a cyclic $A_\infty$-algebra of degree $d$ with respect to the bilinear form $\zeta_A$.
	\end{prop}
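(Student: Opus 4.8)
The plan is to write down the trivial-extension (square-zero) $A_\infty$-structure on $\partial_d A = A \oplus s^d A^\vee$ explicitly and then verify the three requirements of a $d$-cyclic $A_\infty$-algebra: graded symmetry and nondegeneracy of $\zeta_A$, which are essentially immediate, and $d$-cyclicity of each operation, which is where the real work lies. Writing $\widetilde{m}_n$ for the operations on $\partial_d A$, they are determined by the rule that $\widetilde{m}_n$ restricts to $m_n$ when all $n$ inputs lie in $A$; it is given by the dual $A_\infty$-bimodule operation when exactly one input lies in $s^d A^\vee$ and the rest in $A$; and it vanishes as soon as two or more inputs lie in $s^d A^\vee$. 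Here the bimodule structure on $s^d A^\vee$ is the one obtained from viewing $A$ as a bimodule over itself (with structure maps $m_{p+1+q}$) and dualizing: for homogeneous $a_0, \ldots, a_{p+q} \in A$ and $s^d f \in s^d A^\vee$,
\begin{equation*}
\big(m_{p,q}(a_1 \otimes \cdots \otimes a_p \otimes s^d f \otimes a_{p+1} \otimes \cdots \otimes a_{p+q})\big)(a_0) = \pm\, f\big(m_{p+q+1}(a_{p+1} \otimes \cdots \otimes a_{p+q} \otimes a_0 \otimes a_1 \otimes \cdots \otimes a_p)\big),
\end{equation*}
the Koszul sign being dictated by moving $a_0$ past the remaining inputs and by the suspension $s^d$. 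This is precisely the content of the Remark above: the dual bimodule is defined so that $\zeta_A$ becomes invariant.

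The graded symmetry and degree of $\zeta_A$ hold by construction, and nondegeneracy follows because $s^d f \mapsto \zeta_A(s^d f, -) = f(-)$ identifies $s^d A^\vee$ with the dual of $A$ (and symmetrically), so $\zeta_A$ restricts to the perfect evaluation pairing between $A$ and $s^d A^\vee$ and vanishes on the two diagonal blocks $A \times A$ and $s^d A^\vee \times s^d A^\vee$.

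The heart of the argument is the $d$-cyclicity of each $\widetilde{m}_n$. Because $\zeta_A$ pairs only $A$ with $s^d A^\vee$, both sides of the cyclic identity vanish unless exactly one of the $n+1$ arguments $x_0, \ldots, x_n$ lies in $s^d A^\vee$ and the remaining $n$ lie in $A$. In that case the identity reduces to comparing the value produced when the dual vector sits in one slot with the value produced when it sits in another. I would verify it by first checking the single elementary cyclic move, transporting the dual vector one position along, which is exactly the defining adjunction of the dual-bimodule operations displayed above, and then composing such moves to realize the full rotation sending $(x_1, \ldots, x_n \mid x_0)$ to $(x_0, \ldots, x_{n-1} \mid x_n)$. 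Equivalently, one may repackage this as the criterion of the Remark, checking that the strict map $\partial_d A \to s^d (\partial_d A)^\vee$ induced by the nondegenerate form $\zeta_A$ is a morphism of $A_\infty$-bimodules, which again comes down to the same adjunction.

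The main obstacle will be the sign bookkeeping: reconciling the prescribed cyclic sign $(-1)^{n + |x_0|(\sum_{i=1}^n |x_i|)}$ with the Koszul signs accumulated from the suspension (which shifts $|s^d f| = |f| + d$) and from permuting the graded inputs in the dual-bimodule formula. I expect that the signs defining the trivial-extension operations are chosen precisely to force this match, so the verification amounts to confirming that the two sign conventions are compatible; isolating the single-transposition case keeps the computation tractable and avoids an unwieldy global sign bookkeeping across all positions at once.
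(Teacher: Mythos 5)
The paper states this proposition without proof (it is presented as a standard fact about the trivial extension $A_\infty$-algebra), so there is nothing to compare against; your proposal supplies the argument that the authors leave implicit, and it is correct. Your key reduction is exactly right: since $\zeta_A$ pairs $A$ only with $s^dA^\vee$, both sides of the cyclicity identity vanish unless exactly one of the $n+1$ arguments is a dual vector, and in that case cyclicity is precisely the defining adjunction of the dual $A_\infty$-bimodule structure on $s^dA^\vee$ (compare the sign conventions in the paper's Proposition on dual homotopy Rota--Baxter modules, which dualizes in the same rotated fashion), with the general slot position obtained by composing elementary cyclic moves. The only caveat is the one you already flag: the suspension contributes $|s^df|=|f|+d$ to every Koszul sign, so the elementary-move verification must be done once carefully rather than waved at, but the structure of the argument is sound.
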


	\begin{defn}\cite{KTV25}
		Let  $d\in \mathbb{Z}$. A \textbf{$d$-pre-Calabi-Yau} structure on a graded space $A=\oplus_{n \in \mathbb{Z}} A_n$ consists of  a  $(d-1)$-cyclic   $A_{\infty}$-algebra structure $\{m_n\}_{n\geqslant1}$ on   $ \partial_{d-1} A=A \oplus s^{d-1}A^{\vee}  $  with respect to the natural bilinear form $\zeta_A: \partial_{d-1} A \otimes \partial_{d-1} A \rightarrow \bfk$ such that $m_n(A^{\otimes n})\subset A$ for all $n\geqslant 1$; that is, $\partial_{d-1}A$ contains  $A$ as an $A_\infty$-subalgebra.

		A 0-pre-Calabi-Yau algebra will be simply called a   {\bf pre-Calabi-Yau algebra} .
	\end{defn}
	
	\begin{remark}
		In the original definition of pre-Calabi-Yau algebras by Kontsevich, Takeda, and Vlassopoulos in \cite{KTV25}, a pre-Calabi-Yau algebra is defined as a space endowed with a complicated family of operations involving multiple inputs and outputs, subject to certain compatibility conditions. They proved that, on a finite-dimensional space, a pre-Calabi-Yau structure in this sense is equivalent to the one described above.
	\end{remark}

We now introduce certain pre-Calabi–Yau algebras satisfying specific desirable properties, following primarily \cite{FH21}.

\begin{defn} Let $A$ be a pre-Calabi-Yau algebra. We say that $A$ is
	\begin{itemize}
		\item[(i)] {\bf good} if the $A_{\infty}$-algebra structure $\{m_n\}_{n\geqslant1}$ on   $ \partial_{-1} A=A \oplus s^{-1}A^{\vee}  $ satisfies: 
		
		\begin{itemize}
			\item [(a)]for all $i> 1$, $m_{2i}=0$;
			\item  [(b)] for   all $i\geqslant 1$, 
			\begin{align*}
				& m_{2i-1}(A\otimes s^{-1}A^{\vee}\otimes\cdots\otimes s^{-1}A^{\vee}\otimes A)\subseteq A, \\
			&m_{2i-1}( s^{-1}A^{\vee}\otimes A\otimes\cdots\otimes A\otimes s^{-1}A^{\vee})\subseteq s^{-1}A^{\vee},
			\end{align*}
			and $m_{2i-1}$  vanishes in all other cases;
		\end{itemize}   
		
		\item[(ii)] \label{Def: fine} {\bf fine} if it is good and $m_2$ also vanishes.
		\item [(iii)]\label{Def: manageable} {\bf manageable} if $m_2$ restricted to $A$ is an associative multiplication, denoted by $``\cdot"$, and for $(a,s^{-1}f),(b,s^{-1}g)\in \partial_{-1} A$:
		\[m_2((a,s^{-1}f)\otimes (b,s^{-1}g)=(a\cdot b,(-1)^{|a|}s^{-1}a{\rhd}g+s^{-1}f{\lhd}b);\]
		where the symbols ``$\rhd$" and ``$\lhd$" denote the natural left and right actions of $A$ on $A^\vee$ respectively, induce by the multiplication on $A$.
		\item[(iv)] {\bf special}  if the $A_{\infty}$-algebra structure $\{m_n\}_{n\geqslant1}$ on   $ \partial_{-1} A=A \oplus s^{-1}A^{\vee}  $ satisfies:  for all $n>1$ $m_{2n-1}$ is ultracyclic, that is, $v_1,\cdots v_{2n}\in \partial_{-1} A$, and $\sigma\in \mathfrak{S}_{n}$,
		\[\zeta_A(m_{2n-1}(v_1\otimes\cdots\otimes v_{2n-1}), v_{2n})=\varepsilon(\widetilde{\sigma}; v_1,v_2, \dots, v_{2n-1}, v_{2n})\zeta_A(m_{2n-1}(v_{\widetilde{\sigma}(1)}\otimes \cdots\otimes v_{\widetilde{\sigma}(2n-1)}), v_{\widetilde{\sigma}(2n)}),\]
		where $\widetilde{\sigma}\in \mathfrak{S}_{2n}$ is defined as $\widetilde{\sigma}(2i)=2\sigma(i)$, $ \widetilde{\sigma}(2i-1)=2\sigma(i)-1$ for all $1\leqslant i\leqslant n$.
	\end{itemize}
	\end{defn}

	\medskip

%
%
	
	\subsection{Homotopy Rota-Baxter algebras}\ 
	
	In this subsection, we review the notions of homotopy Rota-Baxter algebras and relative Rota-Baxter algebras. We then introduce the concepts of homotopy Rota-Baxter modules, along with a trivial extension construction that produces homotopy Rota-Baxter algebras from such modules. Finally, we present the concept of cyclic homotopy Rota-Baxter algebras—a distinguished class of homotopy Rota-Baxter algebras endowed with a desirable cyclic invariance property—and provide a canonical construction of these structures.
	
\begin{defn} \cite{WZ24}\label{Def: absolute homotopy Rota-Baxter algebras}
	Let $(A,\{m_n\}_{n\geqslant 1})$ be an $A_\infty$-algebra. A homotopy Rota-Baxter operator consists of a family of operators $\{T_n: A^{\ot n}\rightarrow A\}_{n\geqslant 1}$ with $|T_n|=n-1$ subjecting to the following identities for all $n\geqslant 0$:
	\begin{eqnarray}\label{Equation: absolute homotopy Rota-Baxter algebras}
			&&\sum\limits\limits_{ l_1+\dots+l_k=n,\atop
			l_1, \dots, l_k\geqslant 1 } (-1)^{\delta}m_k\circ\Big(T_{l_1}\ot \cdots \ot T_{l_k}\Big)\\
		\nonumber &=&\sum\limits\limits_{1\leqslant j\leqslant p}\sum\limits\limits_{  r_1+\dots+r_p=n,\atop
			r_1, \dots, r_p\geqslant 1 }  (-1)^\eta
		T_{r_1}\circ\Big(\id^{\ot i}\ot m_{p}\circ( T_{r_2}\ot   \cdots\ot T_{r_j}\ot\id \ot T_{r_{j+1}}\ot \cdots\ot T_{r_p})\ot \id^{\ot k}\Big)
		\end{eqnarray}
		where
		\begin{small}
		\begin{align*}\delta=\frac{k(k-1)}{2}+\sum\limits_{j=1}^k(k-j)l_j,\ \  \eta&=i+(p+\sum\limits\limits_{j=2}^p(r_j-1))k+\sum\limits_{t=2}^j(r_t-1)+\sum\limits_{t=2}^p(r_t-1)(p-t).
		\end{align*}
		\end{small}
The triple $(A,\{m_n\}_{n\geqslant 1},\{T_n\}_{\geqslant 1})$ is called a \textbf{homotopy Rota-Baxter algebra}.
	\end{defn}

	We also need the concepts of modules over homotopy Rota-Baxter algebras.
	\begin{defn}\label{Definition: Rota-Baxter module}
		Let $(A,\{m_n\}_{n\geqslant 1},\{T_n\}_{n\geqslant 1})$ be a homotopy Rota-Baxter algebras. A Rota-Baxter module over $A$ is an $A_\infty$-bimodule $(M,\{m_{i,j}\}_{i,j\geqslant 0})$ over $A$ which is endowed with a family of graded maps $\{T^M_{i,j}:A^{\otimes i}\otimes M\otimes A^{\otimes j}\to M\}_{i,j\geqslant 0}$, with $|T^M_{i,j}|=i+j$, such that the following identities hold for any $m,n\geqslant 0$:
		\begin{small}
\begin{eqnarray}\label{Defn: homotopy Rota-Baxter module} 
			&&\sum\limits\limits_{{ i_1+\dots+i_p+l=m,\atop j_1+\cdots+j_q+k=n}\atop
			 p,q,l,k\geqslant 0 } (-1)^{\alpha}m_{p,q}\circ\Big(T_{i_1}\ot \cdots \ot T_{i_p}\ot T^M_{l,k} \ot T_{j_1}\otimes \cdots \ot T_{j_q}\Big)\\
			 &=&\nonumber\sum\limits\limits_{{{{i_1+\cdots+i_p+l=m,\atop j_1+\cdots+j_q+k=n,}}\atop i_1,\cdots,i_p,j_1,\cdots,j_q\geqslant 1}\atop p,\ q,\ l,\ k\geqslant0;} (-1)^{\beta_1}
			T^M_{l,k}\circ\Big(\id_A^{\ot l}\ot m_{p, q}\circ( T_{i_1}\ot   \cdots\ot T_{i_p}\ot\id_M \ot T_{j_{1}}\ot \cdots\ot T_{j_q})\ot \id_A^{\ot k}\Big)\\
			&&\nonumber+\sum\limits_{{{i_1+\cdots+i_p+l+r+1=m\atop j_1+\cdots+j_q+k+t=n} \atop i_1,\cdots,i_p,j_1,\cdots,j_q\geqslant0 }  \atop v,l,k,p,q\geqslant 0}  (-1)^{\beta_2}
			T_{l,k}^M\circ\Big(\id_A^{\ot l}\ot m_{p+1,q}\circ( T_{i_1}\ot \cdots \ot T_{i_v}\ot \id_A\ot T_{i_{v+1}}  \cdots\\
			&&\nonumber \quad\quad\quad\quad\quad\quad\quad   \cdots\ot T_{i_p}\ot T^M_{r,t} \ot T_{j_{1}}\ot \cdots\ot T_{j_q})\ot \id_A^{\ot k}\Big)\\
			&&\nonumber+\sum\limits_{{{i_1+\cdots+i_p+l+r=m\atop j_1+\cdots+j_q+k+t+1=n} \atop i_1,\cdots,i_p,j_1,\cdots,j_q\geqslant0 }  \atop v,l,k,p,q\geqslant 0}  (-1)^{\beta_3}
			T_{l,k}^M\circ\Big(\id_A^{\ot l}\ot m_{p, q+1}\circ( T_{i_1}\ot   \cdots\ot T_{i_p}\ot T^M_{r,t} \ot T_{j_{1}}\ot \cdots \\
			&&\nonumber \quad\quad\quad\quad\quad\quad\quad\cdots \ot T_{j_v}\ot \id_A\ot T_{j_{v+1}}\ot\cdots  T_{j_q})\ot \id_A^{\ot k}\Big),
		\end{eqnarray}
		\end{small}
		where 	
	\begin{small}	\begin{align*}
			\alpha =&\frac{(p+q)(p+q+1)}{2}+q(l+k)+\sum\limits_{t=1}^q(q-t)j_t+\sum\limits_{t=1}^p(p+q+1-t)i_t,\\
			 \beta_1=&  l+k(m+n-l)+\sum\limits_{t=1}^p(i_t-1)+\sum\limits_{t=1}^p(i_t-1)(p+q-t)+\sum\limits_{t=1}^q(j_t-1)(q-t),\\
			 \beta_2=&  l+k(m+n-l)+\sum\limits_{s=1}^v(i_s-1)+(r+t)q+\sum\limits_{s=1}^p(i_s-1)(p+q+1-t)+\sum\limits_{s=1}^q(j_s-1)(q-t),\\
			 \beta_3=&  l+k(m+n-l)+\sum\limits_{s=1}^p(i_s-1)+\sum\limits_{s=1}^v(j_s-1)+(r+t)(q-1)+\sum\limits_{s=1}^p(i_s-1)(p+q+1-t)+\sum\limits_{s=1}^q(j_s-1)(q-t).
		\end{align*}
		\end{small}
		\end{defn}	
		
\begin{defn}\label{Def:HRBA} \cite{DM20}
	Let $(A, \{m_i\}_{i \geqslant1})$ be an $A_\infty$-algebra and $(M, \{m_{p,q}\}_{p,q \geq 0})$ an $A_\infty$-bimodule over $A$. A \textbf{homotopy relative Rota-Baxter operator} on $M$ is a family of operators $\{T_n: M^{\otimes n} \to A\}_{n \geqslant1}$ of degree $|T_n| = n - 1$ satisfying the following identity for all $n \geqslant1$:
	\begin{eqnarray} \label{Eq: homotopy RB-operator-version-2}
		&&\sum\limits\limits_{ l_1+\dots+l_k=n,\atop
			l_1, \dots, l_k\geqslant 1 } (-1)^{\delta}m_k\circ\Big(T_{l_1}\ot \cdots \ot T_{l_k}\Big)\\
		\nonumber &=&\sum\limits\limits_{1\leqslant j\leqslant p}\sum\limits\limits_{  r_1+\dots+r_p=n,\atop
			r_1, \dots, r_p\geqslant 1 }  (-1)^\eta
		T_{r_1}\circ\Big(\id^{\ot i}\ot m_{j-1, p-j}\circ( T_{r_2}\ot   \cdots\ot T_{r_j}\ot\id \ot T_{r_{j+1}}\ot \cdots\ot T_{r_p})\ot \id^{\ot k}\Big),
	\end{eqnarray}
	where the signs $\delta$ and $\eta$ are as defined in Definition~\ref{Def: absolute homotopy Rota-Baxter algebras}. The triple $(A,M,\{T_i\}_{i\geqslant1})$ is called a {\bf homotopy relative Rota-Baxter algebra}.
	 
\end{defn}

In particular, when the underlying $A_\infty$-algebra and $A_\infty$-bimodule of a homotopy relative Rota-Baxter algebra are simply a differential graded (dg) algebra and a dg bimodule over the dg algebra, respectively, the notion simplifies as follows. This special case will be used in later sections.

\begin{defn}
	Let $(A, d, m)$ be a dg algebra, and let $(M, d_M, m^l, m^r)$ be a dg $A$-bimodule, where $m^l$ and $m^r$ denote the left and right actions of $A$, respectively. A \textbf{homotopy relative Rota-Baxter operator} on $M$ is a family of operations $\{T_n: M^{\otimes n} \to A\}_{n \geqslant 1}$ with $|T_n|=n-1$ satisfying the identity
	\begin{align} \label{Eq: homotopy dg RB-operator}
		d \circ T_n 
		&- \sum_{s + k + 1 = n} (-1)^{n-1} T_n \circ (\id^{\otimes s} \otimes d_M \otimes \id^{\otimes k}) \\
		\nonumber
		&= - \sum_{i + j = n} (-1)^{1+i} \, m \circ (T_i \otimes T_j) \\
		\nonumber
		&\quad + \sum_{i + j + k + 1 = n} (-1)^{i + (j-1)(k+1)} T_{i+k+1} \circ (\id^{\otimes i} \otimes m^l \circ (T_j \otimes \id) \otimes \id^{\otimes k}) \\
		\nonumber
		&\quad + \sum_{i + j + k + 1 = n} (-1)^{i + (j-1)k} T_{i+k+1} \circ (\id^{\otimes i} \otimes m^r \circ (\id \otimes T_j) \otimes \id^{\otimes k})
	\end{align}
	for all $n \geqslant 1$. In this case, the triple $(A, M, \{T_n\}_{n \geq 1})$ is called a \textbf{dg homotopy relative Rota-Baxter algebra}.
\end{defn}

%

 \begin{remark}
 	\begin{itemize}
 		\item[(i)] Given a homotopy Rota-Baxter algebra $(A, \{m_n\}_{n \geq 1}, \{T_n\}_{n \geq 1})$, the space $A$ itself naturally forms a homotopy Rota-Baxter module over $A$. Explicitly, the structure maps are given by $m^A_{p,q} = m_{p+q+1}$ and $T^A_{p,q} = T_{p+q+1}$.
 		
 		\item[(ii)] In Equation~\eqref{Equation: absolute homotopy Rota-Baxter algebras} of Definition~\ref{Def: absolute homotopy Rota-Baxter algebras}, if we replace one instance of $A$ in the inputs with a module $M$, and correspondingly replace the operations $m_n$ and $T_n$ with $m_{p,q}$ and $T_{p,q}$ to reflect the presence of $M$, we recover Equation~\eqref{Defn: homotopy Rota-Baxter module}. Furthermore, if all instances of $A$ in the inputs are replaced by $M$, we obtain Equation~\eqref{Eq: homotopy RB-operator-version-2} from the definition of homotopy relative Rota-Baxter algebras.
 	\end{itemize}
 \end{remark}

\begin{prop}
	\label{Proposition:Trivial Extension of homotopy Rota-Baxter algebras}
	Let $(A,\{m_n\}_{n\geqslant 1},\{T_n\}_{n\geqslant 1})$ be a homotopy Rota-Baxter algebra, and let $(M,\{m_{i,j}\}_{i,j\geqslant 0},\{T_{i,j}\}_{i,j\geqslant 0})$ be a homotopy Rota-Baxter module over $A$. Then there exists a canonical homotopy Rota-Baxter algebra structure 
	\[
	\left(\{\widetilde{m}_n\}_{n\geqslant 1},\{\widetilde{T}_n\}_{n\geqslant 1}\right)
	\]
	on the graded space $A \oplus M$, where the structure maps
	\[
	\widetilde{m}_n : (A \oplus M)^{\otimes n} \to A \oplus M \quad \text{and} \quad \widetilde{T}_n : (A \oplus M)^{\otimes n} \to A \oplus M
	\]
	are defined as follows:
	\begin{align*}
		\widetilde{m}_n|_{A^{\otimes n}} &= m_n, &
		\widetilde{m}_n|_{A^{\otimes i} \otimes M \otimes A^{\otimes j}} &= m_{i,j}, \\
		\widetilde{T}_n|_{A^{\otimes n}} &= T_n, &
		\widetilde{T}_n|_{A^{\otimes i} \otimes M \otimes A^{\otimes j}} &= T_{i,j},
	\end{align*}
	where $i + j + 1 = n$. The maps $\widetilde{m}_n$ and $\widetilde{T}_n$ vanish on all other components of $(A \oplus M)^{\otimes n}$. This homotopy Rota-Baxter algebra, denoted by $A \ltimes M$, is called the trivial extension of $A$ by $M$.
\end{prop}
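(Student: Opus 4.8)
The plan is to exploit the natural \emph{$M$-weight grading} on $A\oplus M$, assigning weight $0$ to elements of $A$ and weight $1$ to elements of $M$, and to observe that both homotopy Rota-Baxter axioms for $(\{\widetilde m_n\},\{\widetilde T_n\})$ split into their weight-homogeneous components. First I would record the two structural facts that make this work: every structure map preserves the $M$-weight (an $\widetilde m_n$ or $\widetilde T_n$ applied to an input carrying $w$ copies of $M$ returns an output of weight $w$), and every structure map \emph{vanishes} on inputs of weight $\geqslant 2$, since $\widetilde m_n$ and $\widetilde T_n$ are defined only on $A^{\ot n}$ and on $A^{\ot i}\ot M\ot A^{\ot j}$. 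Because each composite appearing in the Stasheff identity \eqref{Eq: stasheff-id} for $\widetilde m$ and in the operator identity \eqref{Equation: absolute homotopy Rota-Baxter algebras} for $\widetilde T$ is weight-preserving, the entire identity decomposes as a direct sum over the total input weight $w=0,1,2,\dots$, and it suffices to verify each homogeneous piece separately.

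For $w=0$ every input lies in $A$ and every structure map reduces to the corresponding $m_n$ or $T_n$; thus the weight-$0$ part of \eqref{Eq: stasheff-id} is precisely the Stasheff identity for $(A,\{m_n\})$ and the weight-$0$ part of \eqref{Equation: absolute homotopy Rota-Baxter algebras} is precisely the operator identity of Definition~\ref{Def: absolute homotopy Rota-Baxter algebras}; both hold because $(A,\{m_n\},\{T_n\})$ is a homotopy Rota-Baxter algebra. For $w=1$ exactly one input lies in $M$; tracking that single $M$ through each composite, the maps $\widetilde m_n$ and $\widetilde T_n$ specialize to $m_{i,j}$ and $T_{i,j}$ with $i+j+1=n$, so the weight-$1$ part of \eqref{Eq: stasheff-id} becomes the $A_\infty$-bimodule identity \eqref{Eq: A-infty-bimodule} and the weight-$1$ part of \eqref{Equation: absolute homotopy Rota-Baxter algebras} becomes the homotopy Rota-Baxter module identity \eqref{Defn: homotopy Rota-Baxter module}; these hold because $(M,\{m_{i,j}\},\{T_{i,j}\})$ is a homotopy Rota-Baxter module over $A$. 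For $w\geqslant 2$ I would argue term by term: in any composite the outermost operation (the root $m_k$, resp.\ $T_{r_1}$) receives the whole input weight through slots each carrying weight $0$ or $1$, so weight $\geqslant 2$ forces at least two of its $M$-slots to be occupied and the root map vanishes; hence the weight-$\geqslant 2$ component is identically zero.

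The main obstacle is the \textbf{sign bookkeeping} in the weight-$1$ computation: one must check that the Koszul signs $\delta$ and $\eta$ of \eqref{Equation: absolute homotopy Rota-Baxter algebras}, once the single $M$ is inserted at its prescribed position and the degrees $|m_{p,q}|=p+q-1$ and $|T_{p,q}|=p+q$ are used, reproduce exactly the signs $\alpha,\beta_1,\beta_2,\beta_3$ of \eqref{Defn: homotopy Rota-Baxter module} (and similarly that the Stasheff signs reproduce those of \eqref{Eq: A-infty-bimodule}). This is consistent at the level of degrees, since $|\widetilde m_n|=n-2=(p+q)-1=|m_{p,q}|$ and $|\widetilde T_n|=n-1=(p+q)=|T_{p,q}|$ whenever $n=p+q+1$, so no degree mismatch obstructs the identification. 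Indeed, the module and relative axioms were defined precisely as the one-$M$-input restrictions of the absolute axioms, as recorded in part (ii) of the remark preceding this proposition; invoking that observation reduces the sign check to a direct substitution, which I would carry out once and for all by matching the summation ranges term by term. Assembling the three weight components then shows that $(\{\widetilde m_n\},\{\widetilde T_n\})$ satisfies every homotopy Rota-Baxter axiom, establishing the claimed structure on $A\ltimes M$.
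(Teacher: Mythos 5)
Your proposal is correct and amounts to a careful organization of exactly the ``direct computation'' that the paper's own proof declines to write out (the paper simply remarks that this is the analogue of the classical trivial extension of an $A_\infty$-algebra by a bimodule and omits the details). The $M$-weight decomposition is the right way to structure that computation: weight $0$ recovers the axioms for $(A,\{m_n\},\{T_n\})$, weight $1$ recovers the bimodule and Rota-Baxter module identities (with the sign match guaranteed by part (ii) of the remark preceding the proposition, which records that those identities are by definition the one-$M$-input specializations of the absolute ones), and your term-by-term vanishing argument for weight $\geqslant 2$ --- either an inner factor already receives two $M$-slots and dies, or the root does --- is exactly what is needed.
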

	\begin{proof}
		This is just the analog of classical trivial extension of $A_\infty$-algebras by $A_\infty$-bimodules. It can be checked by direct computations, so we omit the details here.
		\end{proof}
		
		\begin{prop}\label{Proposition: dual homotopy Rota-Baxter module}Let $(M,\{m_{i,j}\}_{i,j\geqslant 0},\{T_{i,j}\}_{i,j\geqslant 0})$ be a homotopy Rota-Baxter module over homotopy Rota-Baxter algebras $(A,\{m_n\}_{n\geqslant 1},\{T_n\}_{n\geqslant 1})$. Then $M^\vee$ has a canonical homotopy Rota-Baxter module structure, in which $T_{i,j}^{M^\vee},m_{i,j}^{M^\vee}: A^{\ot i}\otimes M^\vee\otimes A^{\ot j}\rightarrow M^\vee $ are defined as follows: 
			\begin{align*}&m_{i,j}^{M^\vee}(a_1\otimes \cdots  a_i\otimes f\otimes b_{1}\otimes \cdots \otimes b_{j})(x)\\
				=&(-1)^{(j+1)(i+j+1)+ (\sum\limits_{k=1}^i|a_k|)(|f|+|x|+\sum\limits_{k=1}^j|b_k|)+|f|(i+j-1)} f\big(m_{j,i}^M(b_{1}\otimes \cdots \otimes b_j\otimes x\otimes a_1\otimes \cdots \otimes a_i)\big)\\
				&\\
				&T_{i,j}^{M^\vee}(a_1\otimes \cdots  a_i\otimes f\otimes b_{1}\otimes \cdots \otimes b_{j})(x)\\
				=&(-1)^{(j+1)(i+j+1)+ (\sum\limits_{k=1}^i|a_k|)(|f|+|x|+\sum\limits_{k=1}^j|b_k|)+|f|(i+j)} f\big(T_{j,i}^M(b_{1}\otimes \cdots \otimes b_j\otimes x\otimes a_1\otimes \cdots \otimes a_i)\big).
				\end{align*}
In particular, $A^\vee$ is a homotopy Rota-Baxter module over $A$.
			\end{prop}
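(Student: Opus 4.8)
The plan is to verify the two layers of structure separately: first that the maps $\{m^{M^\vee}_{i,j}\}_{i,j\geqslant 0}$ equip $M^\vee$ with an $A_\infty$-bimodule structure over $A$, and then that the maps $\{T^{M^\vee}_{i,j}\}_{i,j\geqslant 0}$, together with the former, satisfy the homotopy Rota-Baxter module identity \eqref{Defn: homotopy Rota-Baxter module}. The guiding principle throughout is that the defining formulas realize $m^{M^\vee}_{i,j}$ and $T^{M^\vee}_{i,j}$ as the adjoints of $m^M_{j,i}$ and $T^M_{j,i}$ under the evaluation pairing $M^\vee \otimes M \to \bfk$, combined with the order-reversal that converts a left $A$-action into a right one and vice versa. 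This is precisely why the index pair $(i,j)$ is swapped to $(j,i)$ and why the $a$'s and $b$'s change sides. Consequently, every identity to be checked for $M^\vee$ should emerge by transposing the corresponding identity for $M$ and reading its inputs in reversed order.

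For the first layer, the statement that the dual of an $A_\infty$-bimodule is again an $A_\infty$-bimodule is the standard duality construction, and the sign prefactor in the definition of $m^{M^\vee}_{i,j}$ is dictated precisely so that \eqref{Eq: A-infty-bimodule} holds. I would record this by pairing both sides of the $M^\vee$-version of \eqref{Eq: A-infty-bimodule} with an arbitrary test element $x \in M$: each composite of dual operations unfolds, via the defining formula, into $f$ evaluated on a composite of the original operations $m^M_{p,q}$ and $m_n$, with the tensor factors in reversed order. The three groups of terms in \eqref{Eq: A-infty-bimodule}—multiplications absorbed on the left, the module actions, and multiplications absorbed on the right—are interchanged by the reversal, so the $M^\vee$-identity becomes the $M$-identity read backwards, which holds by hypothesis.

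For the second layer, I would substitute the definitions of $m^{M^\vee}_{i,j}$ and $T^{M^\vee}_{i,j}$ into the three sums on the right-hand side of \eqref{Defn: homotopy Rota-Baxter module} (now posed for $M^\vee$) and again evaluate against $x \in M$. Repeated application of the defining formulas peels the dual operations off one at a time, each step transporting a block of inputs to the opposite side, until one is left with $f$ applied to a single composite of the genuine structure maps $m^M_{p,q}$, $T^M_{l,k}$, $m_n$, $T_n$ of $M$. Under the substitutions $(p,q)\mapsto(q,p)$, $(l,k)\mapsto(k,l)$ and the global input reversal, the left-insertion sum and the right-insertion sum of the $M^\vee$-identity are exchanged with the right- and left-insertion sums of the $M$-identity, while the ``$m_{p,q}\circ(T\otimes\cdots)$'' term maps to its counterpart; hence the dualized identity is exactly \eqref{Defn: homotopy Rota-Baxter module} for $M$, which holds.

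The main obstacle lies entirely in the signs. Reversing $i+j$ homogeneous tensor factors produces a Koszul sign, the degree shifts $|T_{i,j}|=i+j$ and $|m_{i,j}|=i+j-1$ contribute, and each unfolding of a dual map carries its explicit prefactor; one must check that their cumulative effect reproduces exactly the exponents $\alpha,\beta_1,\beta_2,\beta_3$ of \eqref{Defn: homotopy Rota-Baxter module} after the index swap and reversal. This is a bookkeeping computation of the same flavor as the one verifying the dual $A_\infty$-bimodule, and it is precisely this matching that forces the specific prefactors $(-1)^{(j+1)(i+j+1)+\cdots}$ appearing in the definitions of $m^{M^\vee}_{i,j}$ and $T^{M^\vee}_{i,j}$. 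Since the verification becomes routine once the term-by-term correspondence is set up, I would present it schematically rather than in full. Finally, the last assertion is immediate: by the observation recorded in the Remark above, $A$ is itself a homotopy Rota-Baxter module over $A$ with $m^A_{p,q}=m_{p+q+1}$ and $T^A_{p,q}=T_{p+q+1}$, so applying the construction just established to $M=A$ endows $A^\vee$ with a homotopy Rota-Baxter module structure.
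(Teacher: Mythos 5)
Your proposal is correct and follows essentially the same route as the paper's proof in Appendix~\ref{Appendix:  Proof of dual homotopy Rota-Baxter module}: evaluate the dualized identity against a test element $x\in M$, unfold the defining formulas so that each composite of dual operations becomes $f$ applied to a composite of the original structure maps with reversed inputs, and observe that the four groups of terms pair off (the ``product of $T$'s'' term with the $\beta_1$-term, and the two $\id_A$-insertion terms with each other) up to a common global sign, so the sum vanishes by the identity for $M$. The only difference is that you also spell out the $A_\infty$-bimodule layer, which the paper treats as the standard duality construction without comment.
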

			\begin{proof}
				The proof of this proposition involves extensive computations. For the sake of readability, we have placed the proof in the Appendix~\ref{Appendix:  Proof of dual homotopy Rota-Baxter module}.
				\end{proof}

	{ \begin{defn}Let $A$ be a cyclic $A_\infty$-algebra with respect to a nondegenerate bilinear form $\gamma: A\ot A \rightarrow \bfk$. A homotopy Rota-Baxter operator $\{T_n\}_{n\geqslant 1}$ on $A$ is said to be  cyclic if each operator $T_n:A^{\ot n}\rightarrow A$ is cyclic with respect to the bilinear form $\gamma$. Then $(A,\{T_n\}_{n\geqslant 1})$ is called a {\bf cyclic  homotopy (absolute) Rota-Baxter algebras}.
	
	 Moreover, we call $\{T_n\}_{n \geqslant 1}$ an \textbf{ultracyclic homotopy Rota–Baxter operator} if each operator $T_n$ is both cyclic and skew-symmetric, i.e., for all $\sigma \in \mathfrak{S}_n$, the identity
	 \[T_n \circ \sigma = \operatorname{sgn}(\sigma) T_n \]
	 holds. In this case, the pair $(A, \{T_n\}_{n \geqslant 1})$ is called an \textbf{ ultracyclic homotopy Rota–Baxter algebra.}

	\end{defn}
}

	We give a method to construct  the cyclic   homotopy Rota-Baxter algebras from homotopy Rota-Baxter algebras,  called the {\bf cyclic completion for homotopy Rota-Baxter algebras}.
	\begin{prop}\label{Prop:From homotopy Rota-Baxter algebras to cyclic homotopy Rota-Baxter algerbas}
		Let $(A,\{m_n\}_{n\geqslant 1},\{T_n\}_{n\geqslant1})$ be a locally finite-dimensional homotopy Rota-Baxter algebra. Then $\partial_0A:=A\ltimes A^\vee$ is a cyclic homotopy Rota-Baxter algebra.  Precisely, the homotopy Rota-Baxter operator $\{\widetilde{T}_n\}_{n\geqslant 1}$ is given by the following formulas:
 for homogeneous elements $(a_1,f_1),\cdots,(a_n,f_n)\in \partial_0A=A\oplus A^\vee$, $$\widetilde{T}_n:(\partial_0A)^{\otimes n}\longrightarrow \partial_0A$$
	
	\[	\widetilde{T}_n\left((a_1,f_1)\otimes \cdots \otimes (a_n,f_n)\right)=\left(T_n(a_1\otimes \cdots\otimes a_n),\sum\limits_{j=1}^n(-1)^{\xi}  f_j\circ T_n(a_{j+1}\otimes \cdots\otimes a_n\otimes -\otimes a_1\otimes \cdots\otimes a_{j-1})\right),\]
	where
	\begin{small} \[\xi=jn+(n-1)|f_j|+(\sum\limits_{k=1}^{j-1}(|a_k|))(|f_j|+\sum\limits_{k=j+1}^n(|a_k|)).\]
		\end{small}
 Moreover, if $\{T_n\}_{n\geqslant1}$  is skew-symmetric, then   $\partial_0A$ is an ultracyclic homotopy Rota-Baxter algebra.
	\end{prop}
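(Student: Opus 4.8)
The plan is to realize $\partial_0 A$ as an instance of the trivial extension construction and then to read off the cyclic symmetry from the explicit formula. First I would apply Proposition~\ref{Proposition: dual homotopy Rota-Baxter module} to the regular homotopy Rota-Baxter module, i.e.\ to $A$ regarded as a module over itself via $m^A_{p,q}=m_{p+q+1}$ and $T^A_{p,q}=T_{p+q+1}$; here local finite-dimensionality guarantees the canonical isomorphism $A^{\vee\vee}\cong A$, so that $A^\vee$ carries a genuine homotopy Rota-Baxter module structure over $A$ and the pairing $\zeta_A$ on $\partial_0 A=A\oplus A^\vee$ is nondegenerate. Feeding $M=A^\vee$ into Proposition~\ref{Proposition:Trivial Extension of homotopy Rota-Baxter algebras} then immediately yields that $\partial_0 A=A\ltimes A^\vee$ is a homotopy Rota-Baxter algebra, whose $A_\infty$-part is the trivial extension $A\oplus A^\vee$ already known to be a $0$-cyclic $A_\infty$-algebra with respect to $\zeta_A$. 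Thus the $A_\infty$-cyclicity is free, and only the operators $\widetilde T_n$ need attention.

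It remains to (a) verify that the $\widetilde T_n$ produced by the trivial extension coincide with the displayed formula, and (b) establish the cyclic (resp.\ ultracyclic) invariance of $\{\widetilde T_n\}$. For (a) I would expand $\widetilde T_n\big((a_1,f_1)\otimes\cdots\otimes(a_n,f_n)\big)$ over the $2^n$ homogeneous components of the inputs. Since $\widetilde T_n$ vanishes on every component containing two or more factors from $A^\vee$, only the all-$A$ term, contributing $T_n(a_1\otimes\cdots\otimes a_n)\in A$, and the $n$ terms with a single dual factor $f_j$ in slot $j$, contributing $T^{A^\vee}_{j-1,n-j}(a_1\otimes\cdots\otimes f_j\otimes\cdots\otimes a_n)\in A^\vee$, survive. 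Substituting the dual-module formula of Proposition~\ref{Proposition: dual homotopy Rota-Baxter module} and using $T^A_{n-j,j-1}=T_n$ turns the $j$-th term into the functional $f_j\circ T_n(a_{j+1}\otimes\cdots\otimes a_n\otimes -\otimes a_1\otimes\cdots\otimes a_{j-1})$, and collecting the Koszul sign from the tensor expansion together with the sign in the dual-module formula reproduces the stated exponent $\xi$.

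The heart of the argument is (b). To prove each $\widetilde T_n$ is $0$-cyclic with respect to $\zeta_A$, I would compute both sides of the cyclicity axiom on inputs $v_i=(a_i,f_i)$. Because $\zeta_A$ pairs $A$ only with $A^\vee$, the scalar $\zeta_A(\widetilde T_n(v_1\otimes\cdots\otimes v_n),v_0)$ collapses to a sum of $n+1$ contributions of the form $f_k\big(T_n(\text{a cyclic rotation of }a_0,\dots,\widehat{a_k},\dots,a_n)\big)$, one for each $k\in\{0,\dots,n\}$; the right-hand side, after the rotation of $v_0\otimes\cdots\otimes v_{n-1}$ paired against $v_n$, produces the same $n+1$ scalars. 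Matching the two expressions term by term in $f_k$ reduces cyclicity to a finite collection of modular-$2$ identities relating the exponents $\xi$, the Koszul signs of $\zeta_A$, and the global factor $(-1)^{n+|v_0|\sum_{i=1}^n|v_i|}$, which I would verify by expanding $\xi$. For the ultracyclic statement I would, assuming $\{T_n\}$ skew-symmetric, further check $\widetilde T_n\circ\sigma=\sgn(\sigma)\widetilde T_n$: on the $A$-valued component this is the hypothesis, while on the $A^\vee$-valued component a transposition of adjacent inputs either permutes the $a$'s inside a fixed $T_n(\cdots)$—absorbed by skew-symmetry of $T_n$—or exchanges which slot carries the dual factor, and the sign $\xi$ is arranged so that both cases agree with $\sgn(\sigma)$.

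I expect the main obstacle to be the sign reconciliation in step (b): one must show that the exponent $\xi$ intertwines correctly with the Koszul signs coming from $\zeta_A$, from the cyclic rotation of inputs, and from the global factor $(-1)^{n+|v_0|\sum_{i=1}^n|v_i|}$ in the cyclicity axiom. The conceptual content is light—the formula for $\widetilde T_n$ is manifestly a cyclic symmetrization of $T_n$, so the rotated terms are designed to pair up—but confirming that every sign cancels requires a systematic, if unilluminating, computation that I would relegate to the verification of a small number of explicit exponent identities.
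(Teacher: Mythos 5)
Your proposal follows the same route as the paper: combine Proposition~\ref{Proposition: dual homotopy Rota-Baxter module} (to equip $A^\vee$ with a homotopy Rota-Baxter module structure) with Proposition~\ref{Proposition:Trivial Extension of homotopy Rota-Baxter algebras} (to form $A\ltimes A^\vee$), then read off cyclicity and, under skew-symmetry of the $T_n$, ultracyclicity directly from the explicit formula for $\widetilde T_n$. The paper's proof is exactly this, stated more tersely (it omits the sign verifications you outline), so your plan is correct and essentially identical in approach.
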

	\begin{proof}
		According to  Proposition~\ref{Proposition:Trivial Extension of homotopy Rota-Baxter algebras} and Proposition~\ref{Proposition: dual homotopy Rota-Baxter module}, we have that $\partial_0 A$ is a homotopy Rota-Baxter algebra, and it can be seen that this homotopy Rota-Baxter structure on $\partial_0A$ is cyclic with respect to the natural bilinear form on $\partial_0A$. According to the formulas of $\widetilde{T}_n$ presented above, one can see that $\widetilde{T}_n$ is skew-symmetric if each $T_n$ is skew-symmetric.
	\end{proof}


	We also have the notion of relative cyclic homotopy Rota-Baxter operators.

	\begin{defn}
		Let $A$ be an $A_\infty$-algebra, and let $\{T_n: (A^\vee)^{\otimes n} \rightarrow A\}_{n \geqslant 1}$ be a homotopy relative Rota-Baxter operator on the dual bimodule $A^\vee$. We say that $\{T_n\}_{n \geqslant 1}$ is a \textbf{cyclic homotopy relative Rota-Baxter operator} if, for all $n \geqslant 1$ and homogeneous elements $f_0, \dots, f_n \in A^\vee$, the following identity holds:
		\[
		\langle T_n(f_0 \otimes \cdots \otimes f_{n-1}), f_n \rangle 
		= (-1)^{n + |f_n|(\sum_{j=0}^{n-1} |f_j|)} 
		\langle T_n(f_n \otimes f_0 \otimes \cdots \otimes f_{n-2}), f_{n-1} \rangle,
		\]
		where $\langle -,- \rangle: A \times A^\vee \rightarrow \mathbf{k}$ denotes the natural pairing. Then $(A,A^\vee, \{T_n\}_{n\geqslant 1})$ is called a \textbf{cyclic homotopy relative Rota-Baxter algebra}.
		
		Moreover,  we call $\{T_n\}_{n\geqslant 1}$  an {\bf ultracyclic homotopy relative Rota-Baxter operator} if  each operator $T_n$ is cyclic and skew-symmetric, that is , each $T_n$ satisfies \[T_n\circ \sigma=\sgn(\sigma)T_n,\]
		for all  $ \sigma \in \mathfrak{S}_{n}$. In this case,  $(A, A^\vee,\{T_n\}_{n\geqslant 1})$  is called an {\bf ultracyclic homotopy relative Rota-Baxter algebra}.
	\end{defn}
	
	The above two notions, cyclic   absolute homotopy Rota-Baxter algebras and cyclic homotopy  relative Rota-Baxter algebras are related by the following construction.
	
	\begin{prop}\label{Prop: From relative cyclic RB to absolute cyclic RB}Let $A$ be a locally finite-dimensional  $A_\infty$-algebra and $\{T_n : (A^\vee)^{\ot n}\rightarrow A\}_{n\geqslant 1}$   a cyclic homotopy relative Rota-Baxter operator. Define 
		\[\overline{T}_n: (\partial_0A)^{\ot n}\twoheadrightarrow (A^\vee)^{\ot n}\xrightarrow{T_n} A\hookrightarrow \partial_0A.\]
		Then $(\partial_0A, \{\overline{T}_n\}_{n\geqslant 1})$ is a cyclic absolute homotopy Rota-Baxter algebra. 
	\end{prop}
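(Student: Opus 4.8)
The plan is to reduce both defining properties of a cyclic absolute homotopy Rota-Baxter algebra—the homotopy Rota-Baxter identity and the cyclic invariance—to the corresponding properties of the relative operator $\{T_n\}$, exploiting the fact that $\overline{T}_n$ is supported on the single component $(A^\vee)^{\ot n}\subseteq(\partial_0A)^{\ot n}$ and takes values in $A\subseteq\partial_0A$. Recall that $\partial_0A=A\ltimes A^\vee$ carries the trivial-extension $A_\infty$-structure $\{\widetilde{m}_n\}$, whose only nonzero restrictions are $\widetilde{m}_n|_{A^{\ot n}}=m_n$ and $\widetilde{m}_n|_{A^{\ot i}\ot A^\vee\ot A^{\ot j}}=m_{i,j}$ (the $A_\infty$-bimodule structure on $A^\vee$ furnished by Proposition~\ref{Proposition: dual homotopy Rota-Baxter module}), while $\partial_0A$ is already a cyclic $A_\infty$-algebra with respect to $\zeta_A$ by the earlier proposition on $\partial_dA$. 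Thus it remains only to verify that $\{\overline{T}_n\}$ is a homotopy Rota-Baxter operator and that each $\overline{T}_n$ is $0$-cyclic with respect to $\zeta_A$.

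\emph{Step one: the homotopy Rota-Baxter identity.} For identity~\eqref{Equation: absolute homotopy Rota-Baxter algebras} applied to $\{\widetilde{m}_n,\overline{T}_n\}$ I would argue by type-counting. Since each $\overline{T}_\ell$ vanishes unless all its arguments lie in $A^\vee$, both sides vanish off $(A^\vee)^{\ot n}$, so fix inputs in $(A^\vee)^{\ot n}$. On the left, each $\overline{T}_{l_i}$ outputs an element of $A$, so $\widetilde{m}_k$ is applied to $A^{\ot k}$ and equals $m_k$. On the right, the inner block $\widetilde{m}_p\circ(\overline{T}_{r_2}\ot\cdots\ot\id\ot\cdots\ot\overline{T}_{r_p})$ feeds $\widetilde{m}_p$ with $p-1$ outputs in $A$ (from the $\overline{T}$'s) together with a single slot in $A^\vee$ (the $\id$); hence $\widetilde{m}_p$ is forced onto the one-$A^\vee$ component and equals the bimodule action $m_{j-1,p-j}$, which lands in $A^\vee$, whereupon the outer $\overline{T}_{r_1}$ requires its remaining $\id$-slots to lie in $A^\vee$, as they do. After these reductions, identity~\eqref{Equation: absolute homotopy Rota-Baxter algebras} becomes termwise identical to the relative identity~\eqref{Eq: homotopy RB-operator-version-2} for $\{T_n\}$ on the bimodule $A^\vee$; the signs agree because $\delta$ and $\eta$ are literally the same in Definitions~\ref{Def: absolute homotopy Rota-Baxter algebras} and~\ref{Def:HRBA} and because $|\overline{T}_n|=|T_n|=n-1$. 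This is exactly the mechanism recorded in the remark following Definition~\ref{Def:HRBA} (with $M=A^\vee$), so the identity holds by hypothesis.

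\emph{Step two: cyclicity.} I would again localise. The $0$-cyclic identity for $\overline{T}_n$ reads
\[
\zeta_A(\overline{T}_n(v_1\ot\cdots\ot v_n),v_0)=(-1)^{n+|v_0|(\sum_{i=1}^n|v_i|)}\zeta_A(\overline{T}_n(v_0\ot\cdots\ot v_{n-1}),v_n).
\]
As $\overline{T}_n$ lands in $A$ and $\zeta_A$ pairs $A$ nontrivially only with $A^\vee$, both sides vanish unless all $v_i=f_i\in A^\vee$, in which case $\overline{T}_n$ acts as $T_n$. The key point is that $\zeta_A$ restricted to $A\times A^\vee$ is exactly the natural pairing appearing in the cyclic relative Rota-Baxter condition, namely $\zeta_A(a,f)=(-1)^{|a||f|}f(a)=\langle a,f\rangle$. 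Writing $F(g_0,\ldots,g_n):=\langle T_n(g_0\ot\cdots\ot g_{n-1}),g_n\rangle$, the cyclic relative condition asserts that $F$ is invariant, up to the sign $(-1)^{n+|g_n|\sum_{j<n}|g_j|}$, under moving the last argument to the front; substituting $g_n=f_0$ and $g_{i-1}=f_i$ converts this into precisely the displayed $0$-cyclic identity for $\overline{T}_n$. Hence each $\overline{T}_n$ is cyclic with respect to $\zeta_A$, and $(\partial_0A,\{\overline{T}_n\})$ is a cyclic absolute homotopy Rota-Baxter algebra.

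The only genuine difficulty is the sign bookkeeping, and it is concentrated in two places: confirming in Step one that every ``mixed'' term of~\eqref{Equation: absolute homotopy Rota-Baxter algebras} vanishes so that the shared exponents $\delta,\eta$ transfer unchanged to~\eqref{Eq: homotopy RB-operator-version-2}; and verifying in Step two that the Koszul sign built into $\zeta_A(a,f)=(-1)^{|a||f|}f(a)$, together with the degree shift $|T_n|=n-1$, reproduces exactly the exponent $n+|g_n|\sum_{j<n}|g_j|$ of the relative cyclic condition after the cyclic relabelling. Once the identification $\zeta_A|_{A\times A^\vee}=\langle -,-\rangle$ is fixed and the support of $\overline{T}_n$ is used to discard all terms containing an $A$-entry, both verifications are purely formal.
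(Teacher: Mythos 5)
Your argument is correct and follows the same route the paper intends: the paper's own proof is omitted as ``direct computations,'' and your type-counting reduction of the absolute identity on $(\partial_0A)^{\otimes n}$ to the relative identity on $(A^\vee)^{\otimes n}$, together with the identification $\zeta_A|_{A\times A^\vee}=\langle-,-\rangle$ for cyclicity, is exactly the direct verification being alluded to. In effect you supply the details the paper leaves out, including the one genuinely delicate point (that all mixed-type terms vanish on both sides, so the signs $\delta,\eta$ transfer unchanged).
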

	\begin{proof}This can be proved by direct computations, so we omit the details. 
		\end{proof}
		
\begin{remark}
	Every cyclic homotopy absolute Rota-Baxter algebra $(A, \{T_n\}_{n \geqslant 1})$ can naturally be regarded as a cyclic homotopy relative Rota-Baxter algebra $(A, A^\vee, \{T_n'\}_{n \geqslant 1})$, where each $T_n'$ is defined as the composition:
	\[T_n' : (A^\vee)^{\otimes n} \xrightarrow{\varphi^{\otimes n}} A^{\otimes n} \xrightarrow{T_n} A,\]
	where $\varphi : A^\vee \to A$ is the $A_\infty$-bimodule isomorphism induced by the non-degenerate bilinear form $\gamma$ that defines the cyclic $A_\infty$-structure on $A$.
\end{remark}

	\bigskip

	\section{Pre-Calabi-Yau structures arising from cyclic homotopy Rota-Baxter algebras }\label{Section: Pre-Calabi-Yau structures arising from cyclic homotopy Rota-Baxter algebras}

	In this section, we construct pre-Calabi-Yau algebras  from  homotopy Rota-Baxter algebras. We begin by introducing the notion of interactive pairs, consisting of two dg algebras--referred to as the acting algebra and base algebra--equipped with mutually interacting module structures that satisfy a key compatibility condition. We then demonstrate that if the acting algebra of an interactive pair is endowed with a cyclic   homotopy relative Rota-Baxter algebra satisfying certain additional conditions, then the base algebra naturally inherits a pre-Calabi-Yau  algebra structure. In particular, a dg module over a dg algebra which is endowed with a  homotopy relative Rota-Baxter algebra structure naturally inherits a pre-Calabi-Yau algebra structure.

 	\subsection{ Interactive pairs and relative derivatives.}
 \begin{defn}\label{Definition: Interactive Pair}
 	An {\bf interactive pair} $(A,B)$ consists of the following data: 
 	\begin{itemize}
 		\item[(i)]A pair of dg algebras $(A,d_A,\cdot)$ and  $(B,d_B,*)$ .
 		\item[(ii)] A left dg $B$-module structure on the complex $(A,d_A)$ and a left dg $A$-module structure on the complex $(B,d_B)$. To distinguish between them, the left action of $A$ on $B$ is  denoted by $\rhd$, while the left action of $B$  on $A$ is denoted by $\btr$. 
 		\item[(iii)] A compatibility condition ensuring that for all $a \in A, b_1,b_2\in B$, the following identity holds:
 		$$ (b_1 {\btr} a){\rhd} b_2=b_1 * (a{\rhd} b_2 ).$$
 	\end{itemize}
 We call $A$ the acting algebra of the interactive pair and $B$ the base algebra of the interactive pair.
 \end{defn}


 \begin{exam}\label{Exm: interactive pair}\
 	\begin{itemize}
 		\item[(1)] Let $A$ be a dg algebra. Then $(A,A)$ is a  interactive pair.
 		
 		\item[(2)] Let $A$ be a dg algebra and $B$ a dg $A$-module. By viewing $B$ as a dg algebra with trivial multiplication and $A$ as a $B$-module with trivial action, the pair $(A, B)$ forms an interactive pair.
 		
 		\item[(3)]Let \((B, \cdot)\) be a dg algebra. The graded vector space \(\End(B)\) carries a natural dg algebra structure, with multiplication given by composition. The algebra \(B\) becomes a left dg \(\End(B)\)-module in the canonical way. For each element \(b \in B\), define \(l_b \in \End(B)\) by \(l_b(x) := b \cdot x\) for all \(x \in B\). This gives rise to a left action of \(B\) on \(\End(B)\) defined by
 		\[
 		b \btr f := l_b \circ f,
 		\]
 		which equips \(\End(B)\) with the structure of a left dg \(B\)-module. Moreover, for all \(b_1, b_2 \in B\) and \(f \in \End(B)\), we have
 		\[
 		(l_{b_1} \circ f)(b_2) = b_1 \cdot f(b_2).
 		\]
 		Hence, \((\End(B), B)\) forms an interactive pair.

 	\end{itemize}
 \end{exam}

 			%
 	
 	{
 	\begin{defn} \label{Def: derivation and strong derivation on interactive pair}
 		Let $(A,B)$ be an interactive pair. An operator $T_n:(A^\vee)^{\otimes n}\rightarrow A$ is called   \begin{itemize}
 			\item [(i)]  an  {\bf $n$-derivation relative to $B$}, if for all $b_1,b_2\in B$, and $f_1\cdots,f_n\in A^\vee $: 	\begin{eqnarray}\label{Relation:Leibniz identity for homotopy RB}\ \ \ \ \ \ \ \  T_n(f_1\otimes\cdots\otimes f_n){\rhd}(b_1* b_2)=T_n(f_1\otimes \cdots\otimes f_n\btl b_1){\rhd}b_2+\left( T_n(f_1\otimes \cdots\otimes f_n)\rhd b_1\right)* b_2 ;
 			\end{eqnarray}
 			
 			\smallskip
 			
 			\item [(ii)] a {\bf strong    $n$-derivation relative to $B$}, if $T_n$ is an $n$-derivation relative to $B$ and for all  $b_1,b_2\in B$, $g\in B^\vee$, and $f_1,\cdots,f_{n-1}\in A^\vee $, the following identities hold: 	
 			
 			\begin{align}\label{Eq:Cyclic Leibniz identity for homotopy RB at the first componet}
 		 &T_n\big(\kappa(b_1*b_2\otimes f_1)\otimes f_{2}\otimes \cdots\otimes f_{n}\big) \\
 				=&\ (-1)^{|T_n||b_1|}\Big(b_1\btr\big(T_n\big(\kappa(b_2\otimes g)\otimes f_1\otimes \cdots\otimes f_{n-1}\big)\big)\Big) +T_n\big(\kappa\big(b_1\otimes b_2\btr g\big)\otimes f_{1}\otimes \cdots\otimes f_{n-1}\big) \nonumber;
 			\end{align} 

 			\begin{align}\label{Eq:Cyclic Leibniz identity for homotopy RB}
 				 &T_n\big(f_1\otimes\cdots\otimes f_{l-1}\otimes \kappa(b_1*b_2\otimes g)\otimes f_{l}\otimes \cdots\otimes f_{n-1}\big) \\
 			\nonumber	=&T_n(f_1\otimes\cdots\otimes f_{l-1}\btl b_1\otimes \kappa(b_2\otimes g)\otimes f_{l}\otimes \cdots\otimes f_{n-1})  +T_n(f_1\otimes \cdots\otimes f_{l-1}\otimes \kappa(b_1\otimes b_2\btr g)\otimes f_{l}\otimes \cdots\otimes f_{n-1}) ,
 			\end{align} 
 	 for all $1<l\leqslant n$.

 		\end{itemize}  
 	Here $``\btl"$ is the right action of $B$ on $A^\vee$ induced by $``\btr"$ and $\kappa:B\otimes B^\vee\rightarrow A^\vee$  is defined as $\kappa(b\otimes f)(a)=(-1)^{|b|(|f|+|a|)}f(a{\rhd}b)$, for any $b\in B$, $f\in B^\vee$ and $a\in A$.
 	\end{defn}

 	\begin{remark}
 		Given a  interactive pair $(A,B)$, there is an isomorphism:
 		\begin{align*}
 			\iota:	A^{\otimes n}\otimes B&\cong \Hom((A^\vee)^{\otimes n},B) \\
 			a_n\otimes\cdots\otimes a_1\otimes b&\to Q
 		\end{align*}
 		where $Q(f_1\otimes\cdots\otimes f_n)=(-1)^{(\sum_{j=1}^n|f_j|)|b|+(\sum_{j=1}^n|f_j||a_j|)}f_1(a_1)\cdots f_n(a_n)b$, for all $f_1,\ldots,f_n\in A^\vee$.  Since $A$ is a left $B$-module and $B$ is a  right $B$-module, then $A^{\otimes n}\otimes B$ is a   $B$-bimodule.  Therefore, each $n$-derivation $T_n$  relative to $B$ gives rise to a usual derivation   of $B$ into the $B$-bimodule $A^{\otimes n}\otimes B$ : for all $b_1,b_2\in B$
 		\[\iota^{-1}\big(T_n(-\otimes\cdots\otimes-)\rhd (b_1*b_2)\big)=(-1)^{|b_1||T_n|}b_1\btr\iota^{-1}(T_n(-\otimes\cdots\otimes-)\rhd b_2)+\iota^{-1}(T_n(-\otimes\cdots\otimes-)\rhd b_1)*b_2.\]
 		In particular, if one takes $(B,*)$ to be a finite dimensional algebra and $A=\mathrm{End}(B)$, the above construction yields a bijection between the space of $n$-derivation relative to $B$ on $A$ and the space of derivations from $B$ to $A^{\otimes n}\otimes B$.
 	\end{remark}

%
 	
%

 }
 	
 	\begin{prop}\label{Prop: Cyclic Leibniz identity for homotopy RB}
 		Let  $(A,B)$ be an interactive pair with the acting algebra $A$ being  locally finite-dimensional. Let $T_n:(A^\vee)^{\otimes n}\rightarrow A$ be  a cyclic $n$-derivative relative to $B$. Then $T_n$ is  also a strong $n$-derivation relative to $B$.
 		
 	\end{prop}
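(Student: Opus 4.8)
The plan is to derive the two strong-derivation identities \eqref{Eq:Cyclic Leibniz identity for homotopy RB at the first componet} and \eqref{Eq:Cyclic Leibniz identity for homotopy RB} from the ordinary $n$-derivation identity \eqref{Relation:Leibniz identity for homotopy RB} by exploiting the cyclicity of $T_n$. Since $T_n$ lands in $A$ and the natural pairing $\langle-,-\rangle\colon A\times A^\vee\to\bfk$ separates points (it is perfect because $A$ is locally finite-dimensional), it suffices to verify each identity after pairing both sides against an arbitrary $f_n\in A^\vee$. The argument rests on three elementary ``bridges'' that follow directly from Definition~\ref{Definition: Interactive Pair} and Definition~\ref{Def: derivation and strong derivation on interactive pair}: first, $\langle a,\kappa(b\otimes g)\rangle=\pm\,g(a\rhd b)$, which converts a $\kappa$-input into the action $\rhd$ on the output; second, the adjunction $\langle b\btr a,f\rangle=\pm\,\langle a,f\btl b\rangle$ relating the two $B$-actions; and third, $g(x*b)=\pm\,(b\btr g)(x)$ describing the induced left $B$-action on $B^\vee$.

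To establish \eqref{Eq:Cyclic Leibniz identity for homotopy RB} for a fixed slot $1<l\leqslant n$, I pair its left-hand side against $f_n$ and use cyclicity to rotate the distinguished input $\kappa(b_1*b_2\otimes g)$ into the pairing slot. Applying the first bridge turns the resulting scalar into $\pm\,g\big(T_n(\vec h)\rhd(b_1*b_2)\big)$, where $\vec h=(f_l,\dots,f_{n-1},f_n,f_1,\dots,f_{l-1})$ is the rotated tuple of the remaining inputs. The $n$-derivation identity \eqref{Relation:Leibniz identity for homotopy RB} then splits this into $g\big(T_n(\vec h\btl b_1)\rhd b_2\big)+g\big((T_n(\vec h)\rhd b_1)*b_2\big)$. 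For the first summand I reverse the first bridge through $\kappa(b_2\otimes g)$ and rotate back into slot $l$; because $f_{l-1}$ is the last entry of $\vec h$, the factor $\btl b_1$ prescribed by \eqref{Relation:Leibniz identity for homotopy RB} lands precisely on $f_{l-1}$, producing the first term on the right of \eqref{Eq:Cyclic Leibniz identity for homotopy RB}. For the second summand I apply the third bridge to move $b_2$ onto $g$, forming $\kappa(b_1\otimes b_2\btr g)$, and rotate back into slot $l$ to obtain the second term.

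The first-component case \eqref{Eq:Cyclic Leibniz identity for homotopy RB at the first componet} (i.e.\ $l=1$) runs along the same lines, with one structural difference: the input cyclically preceding the first slot is the pairing element $f_n$ rather than an input of $T_n$, so $\vec h=(f_1,\dots,f_{n-1},f_n)$ and the factor $\btl b_1$ from \eqref{Relation:Leibniz identity for homotopy RB} is carried by $f_n$. After rotating $f_n\btl b_1$ into the pairing slot I invoke the second bridge (the $\btl$--$\btr$ adjunction) to transfer $b_1$ off the pairing element and onto the output of $T_n$; this is exactly the step that converts the expression into $(-1)^{|T_n||b_1|}\,b_1\btr\big(T_n(\kappa(b_2\otimes g)\otimes f_1\otimes\cdots\otimes f_{n-1})\big)$. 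The remaining summand is produced exactly as in the generic case.

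The conceptual skeleton is therefore short; the real labor, and the main obstacle, lies entirely in the sign bookkeeping. Each cyclic rotation contributes a Koszul sign governed by the exponent $n+|f_n|\big(\sum_j|f_j|\big)$ of the cyclicity condition, and further signs arise in the three bridges and in \eqref{Relation:Leibniz identity for homotopy RB}. One must verify that, for every slot $l$, these recombine into exactly the signs displayed in \eqref{Eq:Cyclic Leibniz identity for homotopy RB} and \eqref{Eq:Cyclic Leibniz identity for homotopy RB at the first componet}---in particular the prefactor $(-1)^{|T_n||b_1|}$ in the first-component identity. Since every manipulation is an identity of scalars $\langle T_n(\cdots),f_n\rangle$ and the pairing is nondegenerate, matching them for all $f_n$ yields the claimed equalities in $A$.
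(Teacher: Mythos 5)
Your proposal is correct and follows essentially the same route as the paper's proof: pair against an arbitrary $f_n$, rotate the distinguished $\kappa$-input into the pairing slot via cyclicity, convert through $\kappa$ to an expression of the form $g(T_n(\cdots)\rhd(b_1*b_2))$, apply the $n$-derivation identity, and reverse the bridges (using the $\btl$/$\btr$ adjunction in the first-slot case), concluding by nondegeneracy of the pairing. The paper phrases this as showing the pairing of (LHS $-$ RHS) against $f_n$ vanishes, and likewise leaves the Koszul signs largely schematic, so your treatment matches it in both substance and level of detail.
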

 	
 	\begin{proof} We will check that $T_n$ satisfies Equations~\eqref{Eq:Cyclic Leibniz identity for homotopy RB at the first componet}\eqref{Eq:Cyclic Leibniz identity for homotopy RB} in Definition~\ref{Def: derivation and strong derivation on interactive pair}.
 		For all  $b_1,b_2,b_3\in B$, $g\in B^\vee$, and $f_1,\cdots,f_{n}\in A^\vee $
 		
 		\begin{align*}	
 			&\langle 	T_n(\kappa(b_1*b_2\otimes g)\otimes f_{1}\otimes \cdots\otimes f_{n-1}),  f_{n}\rangle\\
 			& -(-1)^{|b_1|(\sum\limits_{i=1}^{n}|f_i|+|g|+|b_2|)}\langle   T_n(\kappa(b_2\otimes g)\otimes f_{1}\otimes \cdots\otimes f_{n-1}),  f_{n}\btl b_1\rangle \\
 			&-\langle  T_n(\kappa(b_1\otimes b_2\btr g),f_{1},\cdots,f_{n-1}) , f_{n}\rangle \\	
 			=	&(-1)^{n+(|b_1|+|b_2|+|g|)(\sum\limits_{i=1}^{n}|f_i|)}\langle  	T_n( f_{1}\otimes \cdots\otimes  f_{n}) \rhd(b_1*b_2)  ,  g\rangle \\
 			&-(-1)^{n+(|b_1|+|b_2|+|g|)(\sum\limits_{i=1}^{n}|f_i|)}\langle   T_n( f_{1}\otimes \cdots\otimes  f_{n}\btl b_1)\rhd b_2,   g  \rangle \\
 			&-(-1)^{n+(|b_1|+|b_2|+|g|)(\sum\limits_{i=1}^{n}|f_i|)}\langle  (T_n( f_{1}\otimes \cdots\otimes  f_{n})\rhd b_1 )*b_2,    g\rangle \\
 			=&0
 		\end{align*}
 		
 		Thus we have \begin{align*}	T_n\big(\kappa(b_1*b_2\otimes g)\otimes f_{1}\otimes \cdots\otimes f_{n-1}\big)=&(-1)^{|T_n||b_1|}b_1*(T_n(\kappa(b_2\otimes g)\otimes f_{1}\otimes \cdots\otimes f_{n-1})\\&+T_n(\kappa(b_1\otimes b_2\btr g)\otimes f_{1}\otimes \cdots\otimes f_{n-1}),
 		\end{align*}
 		that is, $T_n$ fulfills Equation~\eqref{Eq:Cyclic Leibniz identity for homotopy RB at the first componet}.
 		
 		Similarly, for any $1< l\leqslant n$, $f_1,\cdots,f_{n}\in A^\vee$, $g\in B^\vee$ and $b_1,b_2\in B$,
 		\begin{align*}
 			&\langle  T_n(f_1\otimes \cdots\otimes f_{l-1}\btl b_1\otimes\kappa(b_2\otimes g)\otimes f_{l}\otimes\cdots\otimes f_{n-1}), f_{n}\rangle \\
 			&+ \langle  T_n(f_1\otimes \cdots\otimes f_{l-1} \otimes \kappa(b_1\otimes b_2\btr g)\otimes f_{l}\otimes \cdots\otimes f_{n-1}),f_{n}\rangle \\
 			&- \langle  T_n(f_1\otimes \cdots\otimes f_{l-1}  \otimes \kappa(b_1*b_2\otimes g)\otimes f_{l}\otimes\cdots\otimes f_{n-1}), f_{n}\rangle \\
 			=&(-1)^\epsilon \langle  T_n(f_{l}\otimes \cdots\otimes f_{n}\otimes f_1\cdots\otimes f_{l-1}\btl b_1), \kappa(b_2\otimes g)\rangle \\
 			&+(-1)^\epsilon \langle  T_n(f_{l}\otimes \cdots\otimes f_{n}\otimes f_1\cdots\otimes f_{l-1} ), \kappa(b_1\otimes b_2\btr g)\rangle \\
 			&-(-1)^\epsilon \langle  T_n(f_{l}\otimes \cdots\otimes f_{n}\otimes f_1\cdots\otimes f_{l-1} ), \kappa(b_1*b_2\otimes   g)\rangle \\
 			=&(-1)^\epsilon\langle  	T_n(f_{l}\otimes \cdots\otimes f_{n}\otimes f_1\cdots\otimes f_{l-1}\btl b_1){\rhd}b_2\\
 			&+\left( T_n(f_{l}\otimes \cdots\otimes f_{n}\otimes f_1\cdots,f_{l-1})\rhd b_1\right)* b_2\\
 			&-T_n(f_{l}\otimes \cdots\otimes f_{n}\otimes f_1\cdots\otimes f_{l-1}){\rhd}(b_1* b_2), g\rangle \\
 			=&0,
 		\end{align*}
 		where $(-1)^\epsilon$ is the Koszul sign determined by  the cyclic permutation. Thus $T_n$ also satisfies Equation~\eqref{Eq:Cyclic Leibniz identity for homotopy RB} for all $1<l\leqslant n$.
 		
 		In conclusion,  $T_n$ is strong $n$-derivative relative to $B$. 
 	\end{proof}

In the remainder of the paper, we mainly work with interactive pairs whose acting algebras are dg homotopy relative Rota-Baxter algebras. Accordingly, we introduce the following concepts.
\begin{defn}\label{Def: homotopy Rota-Baxter interactive pair}
	A {\bf (strong) homotopy Rota-Baxter interactive pair} is an interactive pair \((A, B)\) where the acting algebra \((A, d_A, \cdot)\) is equipped with a dg  homotopy relative Rota-Baxter structure \(\{T_n : (A^\vee)^{\otimes n} \to A\}_{n \geqslant 1}\), such that each \(T_n\) is a (strong) \(n\)-derivation relative to \(B\). 
	
	Moreover, if in a Rota-Baxter interactive pair $(A,B)$, each $T_n$ is cyclic (resp. ultracyclic), then  it will be called a {\bf cyclic (resp. ultracyclic) homotopy Rota-Baxter interactive pair.}
\end{defn}
	
	\medskip
	
\subsection{Constructing pre-Calabi-Yau algebras from cyclic homotopy Rota-Baxter algebras}\ 

We begin by constructing an $A_\infty$-algebra structure on the space $\partial_{-1}B$, where $B$ is the base algebra of a strong homotopy Rota-Baxter interactive pair.

\begin{lem}\label{Lem: A_infinity structures on homotopy RB dg algebras}
	Let $\big((A,B), \{T_n\}_{n \geqslant 1}\big)$ be a strong homotopy Rota-Baxter interactive pair. Define a family of operations $\{m_n\}_{n \geqslant 1}$ on the space $\partial_{-1}B := B \oplus s^{-1}B^\vee$ as follows:
	\begin{itemize}
		\item[\rm(i)] $m_1 := -d_{\partial_{-1}B}$,
		
		\item[\rm(ii)] For all $ b_1, b_2 \in B, f_1, f_2 \in B^\vee$,
		\[
		m_2((b_1, s^{-1}f_1) \otimes (b_2, s^{-1}f_2)) := \left(b_1 * b_2,\ (-1)^{|b_1|}s^{-1}(b_1 \btr f_2) + s^{-1}(f_1 \btl b_2)\right),
		\]
		
		\item[\rm(iii)] For all $b_1, \ldots, b_{n+1} \in B, f_1, \ldots, f_n \in B^\vee$,
		\[m_{2n+1}(b_1 \otimes s^{-1}f_1 \otimes b_2 \otimes \cdots \otimes s^{-1}f_n \otimes b_{n+1})
		:= (-1)^\gamma\, T_n\left(\kappa(b_1 \otimes f_1) \otimes \cdots \otimes \kappa(b_n \otimes f_n)\right) \rhd b_{n+1},\]
		
		\item[\rm(iv)] For all $b_1, \ldots, b_n \in B, f_0, \ldots, f_n \in B^\vee$,
		\[m_{2n+1}(s^{-1}f_0 \otimes b_1 \otimes s^{-1}f_1 \otimes \cdots \otimes b_n \otimes s^{-1}f_n)
		:= (-1)^{|f_0| + \gamma} s^{-1}\left(f_0 \lhd T_n\left(\kappa(b_1 \otimes f_1) \otimes \cdots \otimes \kappa(b_n \otimes f_n)\right)\right),\]
		
		\item[\rm(v)] $m_n$ vanishes in all other cases,
	\end{itemize}
	where
	$$\gamma = \sum\limits_{k=1}^n (n - k + 1)|b_k| + \sum_{k=1}^n (n - k)|f_k|.$$
	Then $\big(\partial_{-1}B, \{m_n\}_{n \geqslant 1}\big)$ forms an $A_\infty$-algebra.
\end{lem}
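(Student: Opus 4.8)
The plan is to verify the Stasheff identities \eqref{Eq: stasheff-id} for $\{m_n\}_{n\geqslant 1}$ directly, exploiting the very restricted support of the operations. By construction the only nonzero operations are $m_1$, $m_2$, and the two families of odd operations $m_{2n+1}$ ($n\geqslant 1$): the first supported on alternating tensors that begin and end with a factor from $B$, the second on alternating tensors that begin and end with a factor from $s^{-1}B^\vee$. Consequently, for any fixed homogeneous input in $(\partial_{-1}B)^{\otimes N}$, only a few summands of the Stasheff identity can be nonzero, and which ones survive depends solely on the \emph{type sequence} of the input, i.e. the ordered record of which tensor factors lie in $B$ and which in $s^{-1}B^\vee$. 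First I would therefore stratify the entire verification according to this type sequence.

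The elementary strata are disposed of quickly. When every input lies in $B$, only $m_1$ and $m_2$ contribute and the identity reduces to the assertion that $(B,d_B,*)$ is a dg algebra, which is part of the data of an interactive pair. When every input lies in $s^{-1}B^\vee$ and $N\geqslant 2$, no operation is supported on such tensors, so the identity is vacuous. For type sequences containing exactly one factor from $s^{-1}B^\vee$, only $m_1$, $m_2$ and $m_3$ survive, and the identity follows from the compatibility axiom in Definition~\ref{Definition: Interactive Pair}, the dg $B$-bimodule structure on $B^\vee$ entering $m_2$, and the $n=1$ instance of the dg homotopy relative Rota-Baxter relation \eqref{Eq: homotopy dg RB-operator}.

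The substance of the proof is the generic alternating stratum, where the input has $k\geqslant 2$ factors from $s^{-1}B^\vee$. Here I would substitute the explicit formulas (iii) and (iv) into each surviving Stasheff summand and sort the results into three families: summands in which an outer $m_{2i+1}$ is composed, through $m_2$ or the multiplication $*$, with the outputs of inner higher operations (these carry the $m\circ(T\otimes T)$ shape of the left-hand side of \eqref{Eq: homotopy dg RB-operator}); summands in which an inner $m_2$ first combines two adjacent factors before a higher operation acts (these carry the $T\circ(\id\otimes m\otimes \id)$ shape of the right-hand side); and the differential summands produced by $m_1$. The key point is that an inner $m_2$ acting on adjacent base/dual factors feeds an argument of the form $\kappa(b_1*b_2\otimes g)$ or $\kappa(b_1\otimes b_2\btr g)$ into the higher operation; the \emph{strong} $n$-derivation identities \eqref{Eq:Cyclic Leibniz identity for homotopy RB at the first componet} and \eqref{Eq:Cyclic Leibniz identity for homotopy RB} are exactly what is needed to rewrite such arguments, while the $n$-derivation identity \eqref{Relation:Leibniz identity for homotopy RB} together with the interactive-pair compatibility lets one carry the trailing action $\rhd b_{n+1}$ (respectively $f_0\lhd-$) through the multiplication. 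After these rewritings the three families reassemble into the two sides of \eqref{Eq: homotopy dg RB-operator} for $\{T_n\}$, applied to the common outer datum, so the Stasheff identity becomes an instance of the dg homotopy relative Rota-Baxter relation and hence holds.

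The hard part will be the sign bookkeeping. One must check that the Stasheff signs $(-1)^{i+jk}$, the suspension signs coming from $s^{-1}$ and from $\kappa$, and the global factor $(-1)^{\gamma}$ in (iii)--(iv) combine to reproduce precisely the signs $\delta$ and $\eta$ of \eqref{Eq: homotopy dg RB-operator}, together with the Koszul signs generated when the module actions are transported across $*$ by the strong-derivation identities. I would manage this by fixing a normal form for an alternating input and tracking the degree contributions termwise; the cyclic symmetry built into \eqref{Eq:Cyclic Leibniz identity for homotopy RB at the first componet} and \eqref{Eq:Cyclic Leibniz identity for homotopy RB} is exactly what makes the signs produced by relocating $\rhd b_{n+1}$ agree with those demanded by the Rota-Baxter relation. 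As these computations are long but mechanical, I would present the representative cases in the text and relegate the exhaustive sign matching to an appendix, as is done for Proposition~\ref{Proposition: dual homotopy Rota-Baxter module}.
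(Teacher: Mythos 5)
Your proposal follows essentially the same route as the paper's proof (Appendix~B): stratify the Stasheff identities by the type sequence of the input in $B$ versus $s^{-1}B^\vee$, observe that the strictly alternating strata reduce to the dg homotopy relative Rota--Baxter relation \eqref{Eq: homotopy dg RB-operator}, and absorb the interactions with $m_2$ via the (strong) $n$-derivation identities \eqref{Relation:Leibniz identity for homotopy RB}, \eqref{Eq:Cyclic Leibniz identity for homotopy RB at the first componet}, \eqref{Eq:Cyclic Leibniz identity for homotopy RB}. All the needed ingredients are invoked, so the plan is sound.

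One mechanism in your middle paragraph is misattributed, and you would discover this on execution. On a strictly alternating input of length $2n+1$ with $n\geqslant 2$, an inner $m_2$ leaves a string of even length $2n$, on which the outer operation $m_{2n}$ vanishes; so inner $m_2$'s contribute \emph{nothing} on the alternating strata. The summands carrying the $T_{i}\circ(\id^{\otimes s}\otimes m^{l}\circ(T_j\otimes\id)\otimes\id^{\otimes k})$ shape of the right-hand side of \eqref{Eq: homotopy dg RB-operator} instead come from inner \emph{odd} operations $m_{2j+1}$ whose output in $B$ (resp.\ $s^{-1}B^\vee$) is re-fed into the outer $m_{2i+1}$; converting $\kappa\big((T_j(\cdots)\rhd b)\otimes f\big)$ into $T_j(\cdots)\rhd\kappa(b\otimes f)$ requires knowing that $\kappa$ is a dg $A$-bimodule (and right $B$-module) morphism, which the paper isolates as Lemma~\ref{Lemma: kappa morphism} and which your outline omits. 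The inner $m_2$'s only become relevant on the strata containing one adjacent pair of $B$-factors, where they produce the arguments $\kappa(b_1*b_2\otimes g)$, $\kappa(b_1\otimes b_2\btr g)$, $f\btl b$ handled by the derivation identities; the paper accordingly splits your ``generic stratum'' into the strictly alternating cases (its Cases~I--II, giving the Rota--Baxter relation) and the doubled-$B$ cases (its Cases~III--V, giving exactly the three derivation identities). With that correction, and with the sign bookkeeping you already flag, your argument matches the paper's.
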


\begin{proof}
	The proof involves a detailed and technical computation. For clarity and conciseness, we defer the full argument to Appendix~\ref{Appendix: Proof of Lemma: A_infinity structures on homotopy RB dg algebras}.
\end{proof}

\begin{cor}
	Let $(A, A^\vee, \{T_n\}_{n \geqslant1})$ be a  dg homotopy relative Rota-Baxter algebra, and let $B$ be a differential graded left $A$-module. Then the family of operations $\{m_n\}_{n \geqslant 1}$ defined in Lemma~\ref{Lem: A_infinity structures on homotopy RB dg algebras} equips $\partial_{-1}B$ with an $A_\infty$-algebra structure in which $m_2$ is trivial.
\end{cor}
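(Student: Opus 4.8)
The plan is to exhibit the data of the corollary as a special case of a strong homotopy Rota-Baxter interactive pair, invoke Lemma~\ref{Lem: A_infinity structures on homotopy RB dg algebras} directly, and then read off $m_2=0$ from its defining formula.

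First I would manufacture an interactive pair $(A,B)$ from the given dg left $A$-module $B$ along the lines of Example~\ref{Exm: interactive pair}(2): equip $B$ with the trivial multiplication $*=0$, making it a dg algebra, and let $B$ act on $A$ by the trivial action $\btr=0$, making $A$ a dg left $B$-module. The compatibility axiom $(b_1\btr a)\rhd b_2=b_1*(a\rhd b_2)$ then reads $0=0$, so $(A,B)$ is indeed an interactive pair whose acting algebra carries the given homotopy relative Rota-Baxter operator $\{T_n\}_{n\geqslant 1}$ on $A^\vee$.

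The one point that needs checking is that each $T_n$ qualifies as a strong $n$-derivation relative to $B$ in the sense of Definition~\ref{Def: derivation and strong derivation on interactive pair}; I expect this to be the only potentially nontrivial step, but it collapses immediately. The right action $\btl$ of $B$ on $A^\vee$ is induced by $\btr=0$ and hence vanishes, while the actions of $B$ on $B^\vee$ appearing in the identities are induced by $*=0$ and likewise vanish. Since the Leibniz identity~\eqref{Relation:Leibniz identity for homotopy RB} and the two strong-derivation identities~\eqref{Eq:Cyclic Leibniz identity for homotopy RB at the first componet} and \eqref{Eq:Cyclic Leibniz identity for homotopy RB} are assembled entirely out of $*$, $\btr$, and $\btl$, together with the facts $\kappa(b_1*b_2\otimes -)=\kappa(0\otimes -)=0$ and $\kappa(-\otimes b_2\btr g)=\kappa(-\otimes 0)=0$, every term on both sides of each identity is zero. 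Thus $\big((A,B),\{T_n\}_{n\geqslant 1}\big)$ is a strong homotopy Rota-Baxter interactive pair.

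Applying Lemma~\ref{Lem: A_infinity structures on homotopy RB dg algebras} then equips $\partial_{-1}B$ with the $A_\infty$-structure $\{m_n\}_{n\geqslant 1}$. Finally I would specialize part~(ii) of that lemma: since $b_1*b_2=0$ and the dual actions $b_1\btr f_2$ and $f_1\btl b_2$ are induced by $*=0$,
\[
m_2\big((b_1,s^{-1}f_1)\otimes(b_2,s^{-1}f_2)\big)=\big(0,0\big)=0,
\]
so $m_2$ is trivial. There is no genuine obstacle here: trivializing $*$ and $\btr$ simultaneously degenerates both the interactive-pair axiom and the strong-derivation conditions to tautologies and forces $m_2=0$, while all the substantive work is already carried out in Lemma~\ref{Lem: A_infinity structures on homotopy RB dg algebras}.
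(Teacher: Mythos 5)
Your proposal is correct and is exactly the intended argument: the paper leaves this corollary without an explicit proof, and the route it implies is precisely yours — realize $(A,B)$ as the interactive pair of Example~\ref{Exm: interactive pair}(2) with $*=0$ and $\btr=0$, observe that the (strong) $n$-derivation identities and the interactive-pair axiom degenerate to $0=0$, and then read $m_2=0$ off the formula in Lemma~\ref{Lem: A_infinity structures on homotopy RB dg algebras}(ii). Your explicit check that the strong-derivation conditions hold (rather than just the derivation condition) is a welcome bit of care, since the lemma is stated for strong homotopy Rota-Baxter interactive pairs.
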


We emphasize that the homotopy Rota-Baxter structure plays a central role in constructing the $A_\infty$-algebra structure described above. Even when $A$ is an ordinary (non-homotopy) Rota-Baxter algebra, the induced $A_\infty$-structure on $\partial_{-1}B$ may still be nontrivial. Consider, for instance, a homotopy Rota-Baxter pair $(A,B)$ in which the acting algebra $A$ is a Rota-Baxter algebra and the base algebra $B$ is a finite-dimensional $A$-module. According to the formulas in Lemma~\ref{Lem: A_infinity structures on homotopy RB dg algebras}, the resulting $A_\infty$-structure $\{m_n\}_{n \geqslant 1}$ on $\partial_{-1}B$ satisfies $m_n = 0$ for all $n \ne 3$, and the only nontrivial operation
$m_3 : (\partial_{-1}B)^{\otimes 3} \to \partial_{-1}B$
is given by:
\begin{align*}
	m_3(b_1 \otimes s^{-1}f_1 \otimes b_2) &= T(\kappa(b_1 \otimes f_1)) \rhd b_2, \\
	m_3(s^{-1}f_1 \otimes b_2 \otimes s^{-1}f_2) &= s^{-1}f_1 \lhd T(\kappa(b_2 \otimes f_2)),
\end{align*}
for $b_1, b_2 \in B$ and $f_1, f_2 \in B^\vee$, and vanishes in all other cases. Notably, the definition of $m_3$ explicitly involves the Rota-Baxter operator $T$.

\smallskip
	
Furthermore, cyclic homotopy Rota-Baxter operators can produce pre-Calabi-Yau algebra structures.
	\begin{thm}\label{Thm: Pre-Calabi-Yau structure on homotopy RB dg algebras}
		Let $(A, B)$ be a homotopy Rota-Baxter interactive pair, where the acting algebra $A$ and the base algebra  $B$   are locally finite-dimensional. Let $\{T_n:(A^\vee)^{\otimes n}\rightarrow A\}_{n\geqslant 1}$ be the homotopy relative Rota-Baxter operator.
		\begin{itemize}
			\item[\rm(i)] If each $T_n$ is cyclic, then $B$ admits a good manageable pre-Calabi-Yau algebra structure.
			\item[\rm(ii)] If each $T_n$ is ultracyclic, then $B$ admits a good manageable special pre-Calabi-Yau algebra structure.
		\end{itemize}
	\end{thm}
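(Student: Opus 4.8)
The plan is to realize the desired pre-Calabi--Yau structure as the $A_\infty$-algebra $\{m_n\}_{n\geqslant 1}$ on $\partial_{-1}B=B\oplus s^{-1}B^\vee$ already produced in Lemma~\ref{Lem: A_infinity structures on homotopy RB dg algebras}, and then to show that the cyclicity hypothesis on the $T_n$ supplies exactly the remaining $(-1)$-cyclicity. First I would observe that, since $(A,B)$ is a homotopy Rota--Baxter interactive pair, each $T_n$ is by definition an $n$-derivation relative to $B$ (Definition~\ref{Def: homotopy Rota-Baxter interactive pair}); invoking Proposition~\ref{Prop: Cyclic Leibniz identity for homotopy RB}, the cyclicity of each $T_n$ promotes it to a \emph{strong} $n$-derivation relative to $B$, so that $(A,B)$ is a strong homotopy Rota--Baxter interactive pair and Lemma~\ref{Lem: A_infinity structures on homotopy RB dg algebras} applies verbatim, giving the underlying $A_\infty$-algebra on $\partial_{-1}B$. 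It then remains to check the structural shape (good, manageable, $B$ a subalgebra) and the cyclicity.

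The structural conditions are read directly off the formulas of Lemma~\ref{Lem: A_infinity structures on homotopy RB dg algebras}. Only $m_1$, $m_2$, and the odd operations $m_{2n+1}$ are nonzero, so $m_{2i}=0$ for every $i>1$, giving the first goodness condition; the second is automatic, since each $m_{2n+1}$ is defined to be nonzero only on the two alternating input patterns and to output into $B$ (resp. $s^{-1}B^\vee$) in exactly the prescribed way. Because $m_n$ vanishes on $B^{\otimes n}$ for $n\geqslant 3$ while $m_1(B)\subseteq B$ and $m_2(B\otimes B)\subseteq B$, the subspace $B$ is an $A_\infty$-subalgebra. Finally $m_2|_{B^{\otimes 2}}=*$ is associative and the mixed component of $m_2$ is built from the induced left and right $B$-actions on $B^\vee$, matching the manageability formula. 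Thus goodness and manageability hold with no further input.

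The heart of the proof is the $(-1)$-cyclicity of each $m_n$ with respect to $\zeta_B$. For $m_1$ and $m_2$ this coincides with the cyclic $A_\infty$-structure carried by the trivial extension $\partial_{-1}B$, which is $(-1)$-cyclic as recalled above, so cyclicity there is automatic. For $m_{2n+1}$, I would argue as follows. Pairing $m_{2n+1}(v_1\otimes\cdots\otimes v_{2n+1})$ against $v_0$ via $\zeta_B$ forces, on pain of vanishing, the $2n+2$ elements $v_0,\dots,v_{2n+1}$ to alternate between $B$ and $s^{-1}B^\vee$. A single application of the cyclic identity then falls into one of two types. When it merely exchanges the two alternating patterns while leaving the list of blocks $\kappa(b_i\otimes f_i)$ feeding $T_n$ unchanged, the identity reduces---after unwinding $\kappa$, $\zeta_B$, and the adjoint actions $\rhd,\lhd$---to a pure sign verification that uses none of the Rota--Baxter hypotheses. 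When instead it cyclically rotates the arguments of $T_n$, inserting the new block $\kappa(b_{n+1}\otimes g)$ formed from the ``crossing'' pair (the output slot and $v_0$), the identity becomes precisely
\[
\langle T_n(F_1\otimes\cdots\otimes F_n),F_{n+1}\rangle
=(-1)^{n+|F_{n+1}|(\sum_{j=1}^n|F_j|)}\langle T_n(F_{n+1}\otimes F_1\otimes\cdots\otimes F_{n-1}),F_n\rangle,
\]
with $F_i=\kappa(b_i\otimes f_i)$; that is, it is exactly the defining cyclicity of the relative Rota--Baxter operator $T_n$. The hard part is the bookkeeping of Koszul signs coming from the suspensions $s^{-1}$, from the sign in the definition of $\kappa$, and from the degree shift carried by $\zeta_B$: one must check that these combine to turn the cyclicity sign of $T_n$ into the required $(-1)^{(2n+1)+|v_0|\sum_i|v_i|}$. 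I expect this sign reconciliation, rather than any conceptual difficulty, to be the main obstacle, and I would organize it by treating the two pattern-types separately and comparing exponents modulo $2$.

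For part (ii) I would, in addition, verify the ``special'' condition that each $m_{2n+1}$ is ultracyclic. The block permutations $\widetilde{\sigma}\in\mathfrak{S}_{2n}$ act on the inputs by permuting the pairs $(b_i,s^{-1}f_i)$, and under the identification $\kappa$ this is exactly a permutation of the arguments $\kappa(b_i\otimes f_i)$ of $T_n$. Hence the skew-symmetry $T_n\circ\sigma=\sgn(\sigma)T_n$ of an ultracyclic operator translates, after the same Koszul sign bookkeeping as above, into the ultracyclic invariance of $m_{2n+1}$ demanded by the definition of a special pre-Calabi--Yau algebra. Combined with part (i), this upgrades the good manageable pre-Calabi--Yau structure to a good manageable special one, completing the proof.
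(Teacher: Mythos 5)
Your proposal follows essentially the same route as the paper's proof: promote each cyclic $T_n$ to a strong $n$-derivation via Proposition~\ref{Prop: Cyclic Leibniz identity for homotopy RB}, invoke Lemma~\ref{Lem: A_infinity structures on homotopy RB dg algebras} for the $A_\infty$-structure on $\partial_{-1}B$, read off goodness and manageability from the explicit formulas, and verify $(-1)$-cyclicity of $m_{2n+1}$ by splitting into the two rotation types you describe (the second reducing, via $\kappa$, to the defining cyclicity of $T_n$), with skew-symmetry of $T_n$ yielding the special condition in part (ii). The only content you defer --- the Koszul sign reconciliation --- is exactly what the paper's displayed computations carry out, so your plan is correct and matches the paper's argument.
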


	\begin{proof}
		Suppose that each $T_n$ is cyclic and an $n$-derivation relative to $B$. Then, by Proposition~\ref{Prop: Cyclic Leibniz identity for homotopy RB}, each $T_n$ is in fact a strong $n$-derivation relative to $B$. By Lemma~\ref{Lem: A_infinity structures on homotopy RB dg algebras}, this implies that there is an $A_\infty$-algebra structure on $\partial_{-1}B$.
		
		We now verify that this $A_\infty$-algebra is cyclic under the assumption that the homotopy relative Rota-Baxter structure is cyclic. First, note that $m_1$ is cyclic. For $n \geqslant1$, $b_0, \dots, b_n \in B$, and $f_0, \dots, f_n \in B^\vee$, we compute:
		\begin{align*}
			&\zeta_B\big( m_{2n+1}(b_1 \otimes s^{-1}f_1 \otimes \cdots \otimes b_n \otimes s^{-1}f_n \otimes b_0), s^{-1}f_0 \big) \\
			=~& (-1)^\gamma \zeta_B\big( T_n(\kappa(b_1 \otimes f_1) \otimes \cdots \otimes \kappa(b_n \otimes f_n)) \rhd b_0, s^{-1}f_0 \big) \\
			=~& (-1)^{\gamma + (|f_0| - 1)(n - 1 + |b_0| + \sum\limits_{k=1}^n(|b_k| + |f_k|))} f_0\big( T_n(\kappa(b_1 \otimes f_1) \otimes \cdots \otimes \kappa(b_n \otimes f_n)) \rhd b_0 \big) \\
			=~& (-1)^{2n - 1 + (|f_0| - 1)(n + |b_0| + \sum\limits_{k=1}^n(|b_k| + |f_k|))} \zeta_B\big( m_{2n+1}(s^{-1}f_0 \otimes b_1 \otimes \cdots \otimes b_n \otimes s^{-1}f_n), b_0 \big).
		\end{align*}
		
		By Proposition~\ref{Prop: From relative cyclic RB to absolute cyclic RB}, the induced operators $\{\overline{T}_n\}_{n \geqslant1}$ on $\partial_0 A$ form a cyclic homotopy Rota-Baxter operator. Thus,
		\begin{align*}
			&\zeta_B\big( m_{2n+1}(s^{-1}f_0 \otimes b_1 \otimes \cdots \otimes b_n \otimes s^{-1}f_n), b_0 \big) \\
			=~& (-1)^{|f_0| + \gamma} f_0\big( T_n(\kappa(b_1 \otimes f_1) \otimes \cdots \otimes \kappa(b_n \otimes f_n)) \rhd b_0 \big) \\
			=~& (-1)^{|f_0| + \gamma + |f_0|(n - 1 + |b_0| + \sum\limits_{k=1}^n(|b_k| + |f_k|))} \left\langle T_n(\kappa(b_1 \otimes f_1) \otimes \cdots \otimes \kappa(b_n \otimes f_n)), \kappa(b_0 \otimes f_0) \right\rangle \\
			=~& (-1)^{|f_0| + \gamma + |f_0|(n - 1 + |b_0| + \sum\limits_{k=1}^n(|b_k| + |f_k|)) + n + (|b_0| + |f_0|)\sum\limits_{k=1}^n(|b_k| + |f_k|)} \\
			& \quad \left\langle T_n(\kappa(b_0 \otimes f_0) \otimes \cdots \otimes \kappa(b_{n-1} \otimes f_{n-1})), \kappa(b_n \otimes f_n) \right\rangle \\
			=~& (-1)^{2n - 1 + |b_0|(n + 1 + |f_0| + \sum\limits_{k=1}^n(|b_k| + |f_k|)) + \sum\limits_{k=0}^{n-1}(n - k + 1)|b_k| + \sum\limits_{k=0}^{n-1}(n - k)|f_k|} \\
			& \quad \zeta_A\big( T_n(\kappa(b_0 \otimes f_0) \otimes \cdots \otimes \kappa(b_{n-1} \otimes f_{n-1})) \rhd b_n, s^{-1}f_n \big) \\
			=~& (-1)^{2n - 1 + |b_0|(n + 1 + |f_0| + \sum\limits_{k=1}^n(|b_k| + |f_k|))} \zeta_B\big( m_{2n+1}(b_0 \otimes s^{-1}f_0 \otimes \cdots \otimes b_n), s^{-1}f_n \big).
		\end{align*}
		
		Hence, $\partial_{-1}B$ is a $(-1)$-cyclic $A_\infty$-algebra containing $B$ as an $A_\infty$-subalgebra; that is, $B$ is a pre-Calabi-Yau algebra. By the construction in Lemma~\ref{Lem: A_infinity structures on homotopy RB dg algebras}, this pre-Calabi-Yau algebra is good and manageable.
		
		Now assume further that each $T_n$ is skew-symmetric. For each $n \geqslant1$, $b_1, \dots, b_{n+1} \in B$, $f_1, \dots, f_{n+1} \in B^\vee$, and $\sigma \in \mathfrak{S}_{n}$, we have:
		\begin{align*}
			&\zeta_B\big( m_{2n+1}(b_1 \otimes s^{-1}f_1 \otimes \cdots \otimes b_n \otimes s^{-1}f_n \otimes b_{n+1}), s^{-1}f_{n+1} \big) \\
			=~& (-1)^{\gamma + (|f_{n+1}| - 1)(n - 1 + |b_{n+1}| + \sum\limits_{k=1}^n(|b_k| + |f_k|))} f_{n+1}\big( T_n(\kappa(b_1 \otimes f_1) \otimes \cdots \otimes \kappa(b_n \otimes f_n)) \rhd b_{n+1} \big) \\
			=~& \chi(\sigma; b_1 \otimes f_1 \otimes \cdots \otimes b_n \otimes f_n) f_{n+1}\big( T_n(\kappa(b_{\sigma(1)} \otimes f_{\sigma(1)}) \otimes \cdots \otimes \kappa(b_{\sigma(n)} \otimes f_{\sigma(n)})) \rhd b_{n+1} \big) \\
			=~& \varepsilon(\sigma; b_1 \otimes s^{-1}f_1 \otimes \cdots \otimes b_n \otimes s^{-1}f_n)  \zeta_B\big( m_{2n+1}(b_{\sigma(1)} \otimes s^{-1}f_{\sigma(1)} \otimes \cdots \otimes b_{\sigma(n)} \otimes s^{-1}f_{\sigma(n)} \otimes b_{n+1}), s^{-1}f_{n+1} \big).
		\end{align*}
		
		Similarly,
		\begin{align*}
			&\zeta_B\big( m_{2n+1}(s^{-1}f_1 \otimes b_1 \otimes \cdots \otimes s^{-1}f_n \otimes b_n \otimes s^{-1}f_{n+1}), b_{n+1} \big) \\
			=~& \varepsilon(\sigma; s^{-1}f_1 \otimes b_1 \otimes \cdots \otimes s^{-1}f_n \otimes b_n)  \zeta_B\big( m_{2n+1}(s^{-1}f_{\sigma(1)} \otimes b_{\sigma(1)} \otimes \cdots \otimes s^{-1}f_{\sigma(n)} \otimes b_{\sigma(n)} \otimes s^{-1}f_{n+1}), b_{n+1} \big).
		\end{align*}
		
	We already know that $m_{2n+1}$ is cyclic. Moreover, by the skew-symmetry of $T_n$, we conclude that $m_{2n+1}$ is ultracyclic.	Thus, if $\{T_n\}_{n \geqslant1}$ is ultracyclic, then $B$ is a special pre-Calabi-Yau algebra.
	\end{proof}

\begin{remark}
	In fact, the assumption that the acting algebra $A$ is locally finite-dimensional in Theorem~\ref{Thm: Pre-Calabi-Yau structure on homotopy RB dg algebras} is not essential. The theorem remains valid even when $A$ is not locally finite-dimensional, and in such cases, the proof can still be carried out through direct computation.
\end{remark}

\begin{cor}
	Let $(A, d_A, \cdot, \{T_n\}_{n \geqslant 1})$ be a  locally finite-dimensional cyclic dg homotopy Rota-Baxter algebra, and let $B$ be a locally finite-dimensional dg module over the dg algebra $(A, d_A, \cdot)$. Then $B$ admits a fine pre-Calabi-Yau algebra structure.
\end{cor}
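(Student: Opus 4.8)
The plan is to realize $B$ as the base algebra of a cyclic homotopy Rota-Baxter interactive pair and then invoke Theorem~\ref{Thm: Pre-Calabi-Yau structure on homotopy RB dg algebras}(i), finally strengthening the conclusion from \emph{good manageable} to \emph{fine} by checking that $m_2$ degenerates.

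First I would build the interactive pair. Following Example~\ref{Exm: interactive pair}(2), I view $B$ as a dg algebra with trivial multiplication $*=0$ and equip $A$ with the trivial left $B$-action $\btr=0$, while retaining the given left dg $A$-action $\rhd$ on $B$. The compatibility condition $(b_1\btr a)\rhd b_2 = b_1*(a\rhd b_2)$ then holds trivially, both sides being zero, so $(A,B)$ is an interactive pair with acting algebra $A$ and base algebra $B$, both locally finite-dimensional.

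Next I would transport the cyclic \emph{absolute} homotopy Rota-Baxter operator $\{T_n\}_{n\geqslant 1}$ to the dual bimodule. Using the nondegenerate cyclic form $\gamma$ on $A$ together with the induced $A_\infty$-bimodule isomorphism $\varphi:A^\vee\to A$ (an isomorphism since $A$ is locally finite-dimensional), the Remark following Proposition~\ref{Prop: From relative cyclic RB to absolute cyclic RB} supplies a cyclic homotopy relative Rota-Baxter operator $\{T_n' := T_n\circ\varphi^{\otimes n}\}_{n\geqslant 1}$ on $A^\vee$. To meet the hypothesis of the theorem I would verify that each $T_n'$ is an $n$-derivation relative to $B$: because $*=0$ and the induced right action $\btl$ of $B$ on $A^\vee$ is trivial, every term of the Leibniz identity~\eqref{Relation:Leibniz identity for homotopy RB} vanishes, so the condition reduces to $0=0+0$ and holds automatically. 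Since in addition each $T_n'$ is cyclic and $A$ is locally finite-dimensional, Proposition~\ref{Prop: Cyclic Leibniz identity for homotopy RB} upgrades it to a strong $n$-derivation relative to $B$. Thus $(A,B)$ is a cyclic homotopy Rota-Baxter interactive pair, and Theorem~\ref{Thm: Pre-Calabi-Yau structure on homotopy RB dg algebras}(i) endows $B$ with a good manageable pre-Calabi-Yau structure.

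It then remains to promote good and manageable to fine, i.e. to confirm that $m_2$ vanishes on $\partial_{-1}B$. This is exactly where the triviality of the multiplication on $B$ enters: in the formula for $m_2$ from Lemma~\ref{Lem: A_infinity structures on homotopy RB dg algebras}, each summand is assembled from $*$ and from the left/right actions of $B$ on $B^\vee$ induced by $*$, all of which are zero. Hence $m_2=0$, in agreement with the corollary recorded immediately after Lemma~\ref{Lem: A_infinity structures on homotopy RB dg algebras}, and a good pre-Calabi-Yau structure with $m_2=0$ is fine by definition. I do not anticipate a genuine obstacle here: every substantive analytic input (the $A_\infty$-relations, the cyclicity computation, the derivation property) has already been established in the cited results, and the only real content of the corollary is the bookkeeping observation that a trivial multiplication on $B$ forces both the $n$-derivation conditions and $m_2$ to degenerate, which is precisely what refines the generic output of the theorem into a fine structure.
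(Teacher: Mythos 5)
Your proposal is correct and follows essentially the same route as the paper: realize $(A,B)$ as an interactive pair via Example~\ref{Exm: interactive pair}(2) with trivial multiplication on $B$, pass from the cyclic absolute operator to a cyclic relative one on $A^\vee$, note that the $n$-derivation conditions hold vacuously, and apply Theorem~\ref{Thm: Pre-Calabi-Yau structure on homotopy RB dg algebras}(i). You merely make explicit two steps the paper leaves implicit (the transport $T_n'=T_n\circ\varphi^{\otimes n}$ and the vanishing of $m_2$ that upgrades \emph{good manageable} to \emph{fine}), which is a welcome addition rather than a deviation.
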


\begin{proof}
	Since $A$ is a locally finite-dimensional cyclic dg homotopy Rota-Baxter algebra, it is in particular a dg homotopy relative Rota-Baxter algebra. Let $B$ be a dg $A$-module. According to Example~\ref{Exm: interactive pair}(2), the pair $(A, B)$ always forms an interactive pair and is clearly homotopy Rota-Baxter compatible. The result then follows directly from Theorem~\ref{Thm: Pre-Calabi-Yau structure on homotopy RB dg algebras}.
\end{proof}

	\begin{cor}\label{Coro: Pre-Calabi-Yau structure  on graded space induced by  homotopy RB on endomorphism algebra}
		 Let $B$ be a finite dimensional graded space, $A$ the graded algebra $\mathrm{End}(B)$ with the composition being multiplication.   
		 Then   the following four maps given by Lemma~\ref{Lem: A_infinity structures on homotopy RB dg algebras} are bijections:
		 \begin{align*}
		 &\left\{\begin{array}{l}
		 	\text {pairs ($d_B$, $\{T_n\}_{\geqslant 1}$) where $d_B$ is a differential on }\\
		 	\text{$B$ and $\{T_n: (A^\vee)^{\otimes n}\rightarrow A\}_{n\geqslant 1}$ is a cyclic }\\
		 	\text{ homotopy relative Rota-Baxter operator }
		 \end{array}\!\right\} \rightarrow\left\{\begin{array}{l}
		 	\text { fine  pre-Calabi-Yau algebra  }  \\
		 	\text { structures on $B$ }
		 \end{array}\ \ \ \ \ \ \ \ \ \ \ \ \  \right\},\\
		 &\left\{\begin{array}{l}
		 	\text {triples $(d_B, m, \{T_n\}_{n\geqslant 1})$ where $(B,d_B,m)$ is a dg}\\
		 	\text{algebra and $(A,B,\{T_n\}_{n\geqslant 1})$ forms a cyclic }\\
		 	\text{homotopy Rota-Baxter interactive pair }
		 \end{array}\right\} \rightarrow\left\{\begin{array}{l}
		 	\text { good manageable  pre-Calabi-Yau  }  \\
		 	\text {algebra structures on $B$ }
		 \end{array}\ \ \ \ \  \right\},\\
		 &\left\{\begin{array}{l}
		 \text { pairs $(d_B,\{T_n\}_{n\geqslant 1})$ where $d_B$ is a differential on }\\\
		 \text{$B$ and $\{T_n:(A^\vee)^{\otimes n}\rightarrow A\}_{n\geqslant 1}$ is an ultracyclic}\\
		 \text{ homotopy Rota-Baxter operator}
		 \end{array}\!\!\!\right\} \rightarrow\left\{\begin{array}{l}
		 	\text {fine special pre-Calabi-Yau algebra}  \\
		 	\text {  structures on $B$ }
		 \end{array}\ \ \ \  \right\},\\
		 &\left\{\begin{array}{l}
		 	\text{triples $(d_B, m,\{T_n\}_{\geqslant 1})$ where $(B,d_B,m)$ is a dg }
		 	\\\text{algebra and $\{T_n\}_{n\geqslant 1}$ makes $(A,B)$ into  an }\\
		 	\text{ultracyclic homotopy Rota-Baxter interactive}\\
		 	\text{ pair}
		 \end{array}\ \right\} \rightarrow\left\{\!\!\begin{array}{l}
		 	\text { good manageable special  }  \\
		 	\text { pre-Calabi-Yau  algebra structures on $B$  }
		 \end{array}\!\!\!\right\}, 		 
		 \end{align*}
		 where $A$ is always endowed with the induced differential by $d_B$.
		 \end{cor}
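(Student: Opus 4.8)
The four forward maps are exactly the assignments of Lemma~\ref{Lem: A_infinity structures on homotopy RB dg algebras}: from the Rota--Baxter data one builds the operations $\{m_n\}_{n\geqslant 1}$ on $\partial_{-1}B$, and Theorem~\ref{Thm: Pre-Calabi-Yau structure on homotopy RB dg algebras} guarantees that the output is a good manageable (resp. good manageable special) pre-Calabi--Yau structure; in the two cases carrying trivial multiplication on $B$ the formula in Lemma~\ref{Lem: A_infinity structures on homotopy RB dg algebras}(ii) forces $m_2=0$, so the image is fine (resp. fine special). The plan is therefore to produce explicit inverse maps and to check that both composites are the identity. The entire argument rests on two elementary consequences of $\dim B<\infty$ and $A=\End(B)$: first, the left action $\rhd\colon A\to\End(B)$ is the tautological, hence faithful, evaluation, so an element of $A$ is determined by how it acts on $B$; second, the map $\kappa\colon B\otimes B^\vee\to A^\vee$ of Definition~\ref{Def: derivation and strong derivation on interactive pair} is an isomorphism (both sides have dimension $(\dim B)^2$ and $\kappa$ is nondegenerate), identifying $(B\otimes B^\vee)^{\otimes n}$ with $(A^\vee)^{\otimes n}$. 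These are the homotopy counterparts of the identifications underlying the Goncharov--Kolesnikov correspondence recalled in Section~\ref{Section: Rota-Baxter algebras and double Poisson algebras}, recovered here in the case $n=1$.

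To construct the inverse, start from a good pre-Calabi--Yau structure $\{m_n\}_{n\geqslant 1}$ on $\partial_{-1}B$. Set $d_B:=-m_1|_B$ and, in the manageable cases, recover the multiplication as $m:=m_2|_B$. For each $n\geqslant 1$, freeze the inputs $b_1,f_1,\dots,b_n,f_n$ of the component $m_{2n+1}(b_1\otimes s^{-1}f_1\otimes\cdots\otimes s^{-1}f_n\otimes b_{n+1})$; the residual dependence on $b_{n+1}$ is linear, hence defines an element of $\End(B)=A$ depending multilinearly on $(b_1,f_1,\dots,b_n,f_n)$. Through the isomorphism $\kappa$ this multilinear datum is exactly an operator $T_n\colon (A^\vee)^{\otimes n}\to A$, normalised by the sign $(-1)^\gamma$, so that formula (iii) of Lemma~\ref{Lem: A_infinity structures on homotopy RB dg algebras} holds on the nose. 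Thus every good pre-Calabi--Yau structure yields a canonical family $\{T_n\}_{n\geqslant 1}$, and this assignment is manifestly inverse to formula (iii).

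It remains to verify that the recovered $\{T_n\}$ has the required type, and this is where each implication in Lemma~\ref{Lem: A_infinity structures on homotopy RB dg algebras}, Proposition~\ref{Prop: Cyclic Leibniz identity for homotopy RB}, and Theorem~\ref{Thm: Pre-Calabi-Yau structure on homotopy RB dg algebras} must be read as an equivalence. The Stasheff identities for $\{m_n\}$ on the alternating input pattern translate term-by-term into the homotopy relative Rota--Baxter relations~\eqref{Eq: homotopy dg RB-operator}; the Stasheff identities mixing $m_2$ with $m_{2n+1}$, once the manageable form of $m_2$ is substituted, become precisely the Leibniz identity~\eqref{Relation:Leibniz identity for homotopy RB}, so $T_n$ is an $n$-derivation relative to $B$ (this is vacuous when $m=0$, which is why the fine cases require no multiplication and impose no derivation condition); the $(-1)$-cyclicity of $\{m_n\}$ with respect to $\zeta_B$ unwinds, by nondegeneracy of the pairing, into the cyclicity of $T_n$; and the ultracyclicity of $m_{2n+1}$ is equivalent to the skew-symmetry of $T_n$. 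Finally, since $\{m_n\}$ is $(-1)$-cyclic, the cyclic relation computed in the proof of Theorem~\ref{Thm: Pre-Calabi-Yau structure on homotopy RB dg algebras} expresses the component of $m_{2n+1}$ on the pattern $s^{-1}f_0\otimes b_1\otimes\cdots$ in terms of its component on $b_1\otimes s^{-1}f_1\otimes\cdots$; as the latter is what encodes $T_n$, no information is lost, and both composites of the forward and inverse maps are the identity.

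The main obstacle is exactly this reversibility: Lemma~\ref{Lem: A_infinity structures on homotopy RB dg algebras} is established as a one-directional construction, and promoting each sign-laden Stasheff identity to an if-and-only-if statement---most delicately the passage between the $m_2$--$m_{2n+1}$ identities and the relative derivation property---requires confirming that the correspondence of summands is bijective, not merely that one side implies the other. Finite-dimensionality is precisely what removes this difficulty: faithfulness of $\rhd$ and invertibility of $\kappa$ ensure that $\{T_n\}$ and $\{m_n\}$ carry the same information, so the computations of Lemma~\ref{Lem: A_infinity structures on homotopy RB dg algebras} and Theorem~\ref{Thm: Pre-Calabi-Yau structure on homotopy RB dg algebras}, already carried out in one direction, upgrade automatically to the desired equivalences.
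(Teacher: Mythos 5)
Your proposal is correct and follows essentially the same route as the paper: the paper's (very brief) proof likewise observes that each good operation $m_{2n+1}$ is uniquely encoded by an operator on $(B\otimes B^\vee)^{\otimes n}$ and that $\kappa: B\otimes B^\vee \to \End(B)^\vee$ is an isomorphism, so the assignments of Lemma~\ref{Lem: A_infinity structures on homotopy RB dg algebras} are bijective. Your additional observations — faithfulness of the tautological action $\rhd$, determination of the $s^{-1}B^\vee$-output component by $(-1)$-cyclicity, and the fact that the case-by-case computations in Appendix~\ref{Appendix: Proof of Lemma: A_infinity structures on homotopy RB dg algebras} are genuine equivalences — are exactly the details the paper leaves implicit.
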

	\begin{proof} 
		Each good map $m_{2n+1}$ can be uniquely determined by an operator $ \widetilde{T}_n:(B\otimes B^\vee)^{\otimes n}\to B\otimes B^\vee$. Since $\kappa:B\otimes B^\vee\rightarrow \End(B)^\vee$ is an isomorphism, the  maps are  bijective. 
	\end{proof}

 By Theorem~\ref{Thm: Pre-Calabi-Yau structure on homotopy RB dg algebras} and the cyclic completion for homotopy Rota-Baxter algebras Proposition~\ref{Prop:From homotopy Rota-Baxter algebras to cyclic homotopy Rota-Baxter algerbas}, we have the following result.
	\begin{prop}
			 Let $(A,\{T_n\}_{n\geqslant1})$  be a  locally finite-dimensional dg homotopy Rota-Baxter algebra. Then  there is  a fine pre-Calabi-Yau structure $\{m_n\}_{n\geqslant 1}$ on any locally finite-dimensional  left dg $\partial_0A$-module $M$. Moreover, $\{T_n\}_{n\geqslant1}$ is skew-symmetric, the pre-Calabi-Yau algebra structure on $M$ is fine and special. 
	\end{prop}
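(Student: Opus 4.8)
The plan is to deduce the statement directly from the cyclic completion construction together with the pre-Calabi-Yau theorem, so that essentially no new computation is required. First I would apply Proposition~\ref{Prop:From homotopy Rota-Baxter algebras to cyclic homotopy Rota-Baxter algerbas} to the given locally finite-dimensional homotopy Rota-Baxter algebra $(A,\{m_n\}_{n\geqslant1},\{T_n\}_{n\geqslant1})$, which produces $\partial_0A=A\ltimes A^\vee$ together with an absolute homotopy Rota-Baxter operator $\{\widetilde T_n\}_{n\geqslant1}$ that is cyclic with respect to the natural bilinear form on $\partial_0A$. Since $A$ is locally finite-dimensional, so is $A^\vee$, and hence $\partial_0A=A\oplus A^\vee$ is again locally finite-dimensional; thus $(\partial_0A,\{\widetilde T_n\}_{n\geqslant1})$ is a locally finite-dimensional cyclic dg homotopy Rota-Baxter algebra.

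Next I would invoke the Remark following Proposition~\ref{Prop: From relative cyclic RB to absolute cyclic RB} to regard this absolute cyclic structure as a cyclic homotopy relative Rota-Baxter structure on the dual bimodule $(\partial_0A)^\vee$, so that the hypotheses of Theorem~\ref{Thm: Pre-Calabi-Yau structure on homotopy RB dg algebras} are met. Given any locally finite-dimensional left dg $\partial_0A$-module $M$, Example~\ref{Exm: interactive pair}(2) shows that $(\partial_0A,M)$ is an interactive pair in which $M$ carries the trivial multiplication and acts trivially on $\partial_0A$; the compatibility condition then holds vacuously. Applying Theorem~\ref{Thm: Pre-Calabi-Yau structure on homotopy RB dg algebras}(i) (equivalently, the Corollary immediately following it) endows $M$ with a good manageable pre-Calabi-Yau structure $\{m_n\}_{n\geqslant1}$. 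Because the multiplication on $M$ vanishes, the formula in Lemma~\ref{Lem: A_infinity structures on homotopy RB dg algebras}(ii) gives $m_2=0$, so the structure is in fact fine.

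For the final assertion I would use the last sentence of Proposition~\ref{Prop:From homotopy Rota-Baxter algebras to cyclic homotopy Rota-Baxter algerbas}: if $\{T_n\}_{n\geqslant1}$ is skew-symmetric, then $\{\widetilde T_n\}_{n\geqslant1}$ is ultracyclic, i.e.\ $\partial_0A$ is an ultracyclic homotopy Rota-Baxter algebra. Feeding this into Theorem~\ref{Thm: Pre-Calabi-Yau structure on homotopy RB dg algebras}(ii) yields a good manageable special pre-Calabi-Yau structure on $M$, and the same vanishing $m_2=0$ coming from the trivial base multiplication upgrades it to a fine special structure.

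I do not expect any genuine obstacle here, since the result is a formal consequence of the two cited results; the only points requiring attention are bookkeeping ones: verifying that $\partial_0A$ remains locally finite-dimensional, checking that a $\partial_0A$-module fits Example~\ref{Exm: interactive pair}(2) as an interactive pair with trivial base multiplication, and confirming that this triviality forces $m_2=0$ so that ``good manageable'' sharpens to ``fine.'' The most delicate of these is ensuring that the cyclicity (resp.\ ultracyclicity) of the operator is preserved under the identification of the absolute operator $\{\widetilde T_n\}_{n\geqslant1}$ with a relative one, which is exactly the content of the Remark after Proposition~\ref{Prop: From relative cyclic RB to absolute cyclic RB}.
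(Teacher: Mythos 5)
Your proposal is correct and follows exactly the route the paper intends: the paper itself gives no written proof beyond the sentence "By Theorem~\ref{Thm: Pre-Calabi-Yau structure on homotopy RB dg algebras} and the cyclic completion \dots Proposition~\ref{Prop:From homotopy Rota-Baxter algebras to cyclic homotopy Rota-Baxter algerbas}, we have the following result," and your argument is precisely that deduction with the bookkeeping (local finite-dimensionality of $\partial_0A$, the passage from absolute to relative cyclic operators, the vacuous derivation condition for a module viewed as an interactive pair, and $m_2=0$ forcing fineness) spelled out. No gaps.
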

	
%

\bigskip

	\section{Homotopy Rota-Baxter algebras and double Poisson structures }\label{Section:Homotopy   Rota-Baxter algebras and double  Poisson structures}


In Section~\ref{Section: Pre-Calabi-Yau structures arising from cyclic homotopy Rota-Baxter algebras}, we constructed a good manageable (resp. good manageable special) pre-Calabi-Yau algebra on the base algebra of a homotopy Rota-Baxter interactive pair endowed with a cyclic (resp. an ultracyclic) homotopy Rota-Baxter operator $\{T_n\}_{n \geqslant1}$. In \cite{FH21}, Fernández and Herscovich established an equivalence between good manageable special pre-Calabi-Yau algebras and homotopy double Poisson algebras. In the present section, we combine these results to give a direct construction of a homotopy double Poisson algebra from a homotopy Rota-Baxter structure. Specifically, we show that the base algebra of a ultracyclic (resp. cyclic) homotopy Rota-Baxter interactive pair naturally inherits a (resp. cyclic) homotopy double Poisson structure. Moreover, we observe that any module over an ultracyclic homotopy relative Rota-Baxter algebra carries a homotopy double Lie structure, from which it follows that the symmetric algebra on such a module acquires the structure of a homotopy Poisson algebra. As an application, we establish an equivalence between skew-symmetric solutions of the associative Yang-Baxter-infinity equations, ultracyclic homotopy Rota-Baxter algebra structures, fine special pre-Calabi-Yau algebras, and homotopy double Lie algebras.

	\subsection{Double Poisson structures arising from homotopy Rota-Baxter structures}\label{Subection:Homotopy   Rota-Baxter algebras and Poisson structures}\ 
	
	Let's recall some basics on homotopy Poisson algebras and homotopy double Poisson algebras following  \cite{Sched09,FH21}.

	\begin{defn}\label{Def:HDLA}
		A  {\bf cyclic homotopy double Lie algebra} ( also called {\bf a cyclic double $L_\infty$-algebra}) is a   graded space $V=\oplus_{n \in \mathbb{Z}} V_n$ equipped with a family of homogeneous maps $\lbb-,\cdots,-\rbb_{n}: V^{\otimes n} \rightarrow V^{\otimes n}$ with $|\lbb-,\cdots,-\rbb_{n}|=n-2$ satisfying the following conditions  for all $n \geqslant1$,   
		\begin{itemize}
			\item[(i)] Cyclic-symmetry: For all elements $\sigma\in \mathfrak{C}_n$(the cyclic group of $n$ elements) 
			$$ \sigma \circ \lbb-,\cdots,-\rbb_{n} \circ \sigma^{-1}=\sgn(\sigma)\lbb-,\cdots,-\rbb_{n} ;$$
			\item [(ii)] Double Jacobi$ _\infty$ identity:
			\begin{eqnarray}\label{DJac infinty}
				\sum\limits_{i+j=n+1} (-1)^{(j-1)i} \sum\limits_{\sigma \in \mathfrak{C}_{n}} \sgn (\sigma) \sigma\circ  \lbb-,\cdots,-,\lbb-,\cdots,-\rbb_{i}\rbb_{L,j}     \circ\sigma^{-1}=0,
			\end{eqnarray}
			where
			$$
			\lbb-,\cdots,-,\lbb-,\cdots,-\rbb_{i+1}\rbb_{L,j+1}=\left(\lbb-,\cdots,-\rbb_{ j+1 } \otimes \id_ {V} ^ { \otimes i  } \right)  \circ \left(\id_V^{\otimes j} \otimes\lbb-,\cdots,-\rbb_{i+1} \right) .
			$$
		\end{itemize}
	If, in addition,  each map $\lbb-,\cdots,-\rbb_{n} $  is skew-symmetric, meaning that for all $\sigma\in \mathfrak{S}_n$, $$\sigma\circ  \lbb-,\cdots,-\rbb_{n} \circ \sigma^{-1}=\sgn(\sigma)\lbb-,\cdots,-\rbb_{n}  \text{ , for all }  \sigma \in \mathfrak{S}_{n},$$  then  $(V,\lbb-,\cdots,-\rbb_{n})$is called {\bf double $L_\infty$-algebra} (also known as {\bf homotopy double Lie algebra}).
		
	\end{defn}

	The following lemma offers an alternative characterization of a homotopy double Lie algebra, which will be used later.
	\begin{lem}\label{Lemma:def homotopy double Lie2}
		Let $\{\lbb-,\cdots,-\rbb_{n}: V^{\ot n}\rightarrow V^{\ot n}\}_{n\geqslant1}$ be a family of  operations on a graded space $V=\oplus_{n \in \mathbb{Z}} V^n$. For each $k\geqslant 1$, define the opposite bracket $\lbb-,\cdots,-\rbb^{\rm{op}}_{k}:=\sigma_{k}\circ\lbb-,\cdots,-\rbb_{k}\circ\sigma^{- 1}_k  $, where $\sigma_k\in \mathfrak{S}_k$ is the order-reversing permutation
		$$\sigma_{k}=\left( \begin{array}{cccc}
			1\ &2\ &\cdots &k\\
			k\ &k-1\ &\cdots &1\\
		\end{array}\right)\in \mathfrak{S}_k .$$ 
		
		 Then the family $\{\lbb-,\cdots,-\rbb_{n}\}_{n\geqslant 1}$ satisfies the double Jacobi$ _\infty$ identity if and only the opposite  operations $\{\lbb-,\cdots,-\rbb^{\rm{op}}_{n+1}\}_{n\geqslant 0}$ fulfill the following identities:
		\begin{eqnarray}\label{DJac infinty2}
			\sum\limits_{i+j=n+1} (-1)^{i(j-1)} \sum\limits_{\sigma \in \mathfrak{C}_{n}} \sgn (\sigma) \sigma \circ \left( \lbb\lbb-,\cdots,-\rbb^{\rm{op}}_{i},-,\cdots,-\rbb^{\rm{op}}_{R,j}\right)     \circ\sigma^{-1}=0,
		\end{eqnarray}
		where the right-nested composite is defined by
		$$ \lbb\lbb-,\cdots,-\rbb^{\rm{op}}_{i+1},-,\cdots,-\rbb^{\rm{op}}_{R,j+1}= \left( \id_ {V} ^ { \otimes i  }   \otimes \lbb-,\cdots,-\rbb^{\rm{op}}_{ j+1 }  \right)  \circ \left( \lbb-,\cdots,-\rbb^{\rm{op}}_{i+1}\otimes  \id_V^{\otimes j} \right)  .$$
	\end{lem}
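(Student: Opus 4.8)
The plan is to prove the equivalence by conjugating the entire double Jacobi$_\infty$ expression by the global order-reversing permutation $\sigma_n \in \mathfrak{S}_n$ (acting on $V^{\otimes n}$ with Koszul signs, as in the Notations) and showing that this conjugation carries the left-nested expression built from $\{\lbb-,\cdots,-\rbb_n\}_{n\geqslant 1}$ term-by-term onto the right-nested expression built from $\{\lbb-,\cdots,-\rbb^{\rm op}_n\}_{n\geqslant 1}$. Since $\sigma_n$ is invertible (indeed an involution, $\sigma_n^{-1}=\sigma_n$), the operator $X \mapsto \sigma_n\circ X\circ\sigma_n^{-1}$ is a linear automorphism of $\End(V^{\otimes n})$, so the left-nested sum vanishes if and only if its conjugate does; and the conjugate will be shown to equal the right-nested sum, giving exactly the asserted equivalence.

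First I would record the elementary localization identity: for any graded map $\psi:V^{\otimes m}\to V^{\otimes m}$ and any decomposition $n=a+m+b$,
\[
\sigma_n\circ(\id^{\otimes a}\otimes\psi\otimes\id^{\otimes b})\circ\sigma_n=\id^{\otimes b}\otimes(\sigma_m\circ\psi\circ\sigma_m)\otimes\id^{\otimes a}.
\]
This is an instance of the naturality of the symmetric braiding, and a direct Koszul-sign computation confirms that no residual sign arises (the case $m=1$ already exhibits the full cancellation: the signs from the two outer reversals combine with the sign produced when $\psi$ is transported across the left block to yield precisely the standard sign of $\id^{\otimes b}\otimes\psi\otimes\id^{\otimes a}$). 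Applying this to the two factors of the left-nested composite, namely $\lbb-,\cdots,-\rbb_j\otimes\id^{\otimes(i-1)}$ (with $a=0,\ m=j,\ b=i-1$) and $\id^{\otimes(j-1)}\otimes\lbb-,\cdots,-\rbb_i$ (with $a=j-1,\ m=i,\ b=0$), and inserting $\sigma_n\circ\sigma_n=\id$ between them, I obtain
\[
\sigma_n\circ\lbb-,\cdots,-,\lbb-,\cdots,-\rbb_i\rbb_{L,j}\circ\sigma_n=(\id^{\otimes(i-1)}\otimes\lbb-,\cdots,-\rbb^{\rm op}_j)\circ(\lbb-,\cdots,-\rbb^{\rm op}_i\otimes\id^{\otimes(j-1)})=\lbb\lbb-,\cdots,-\rbb^{\rm op}_i,-,\cdots,-\rbb^{\rm op}_{R,j},
\]
where I use that $\sigma_k\circ\lbb-,\cdots,-\rbb_k\circ\sigma_k$ is by definition $\lbb-,\cdots,-\rbb^{\rm op}_k$.

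Next I would treat the cyclic symmetrization. Because the order-reversal conjugates the standard $n$-cycle to its inverse, it normalizes the cyclic group, $\sigma_n\mathfrak{C}_n\sigma_n^{-1}=\mathfrak{C}_n$, and preserves signs. Since the Koszul-signed action of $\mathfrak{S}_n$ on $V^{\otimes n}$ is a genuine representation, inserting $\sigma_n^{-1}\sigma_n=\id$ twice gives $\sigma_n\circ(\sigma\circ X\circ\sigma^{-1})\circ\sigma_n^{-1}=\sigma'\circ(\sigma_n\circ X\circ\sigma_n^{-1})\circ\sigma'^{-1}$ with $\sigma'=\sigma_n\sigma\sigma_n^{-1}$; as $\sigma$ runs over $\mathfrak{C}_n$ so does $\sigma'$, with $\sgn(\sigma')=\sgn(\sigma)$. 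Thus conjugating the full left-nested sum reproduces, summand by summand, the right-nested cyclic sum, and the two explicit prefactors $(-1)^{(j-1)i}$ and $(-1)^{i(j-1)}$ coincide, so no discrepancy arises. Combining this with the term-wise computation above yields the claimed equivalence.

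The hard part is the localization identity: everything hinges on the claim that conjugation by the global reversal introduces no net Koszul sign beyond the internal reversal $\sigma_m\circ\psi\circ\sigma_m$. Since both the left- and right-nested expressions carry the identical explicit prefactor, any leftover degree-dependent sign would break the correspondence; so the crux is the careful bookkeeping verifying that the signs from the two outer block-reversals cancel exactly against the signs from transporting $\psi$ across the surrounding identities. Once this cancellation is established, the remaining steps are purely formal reindexings.
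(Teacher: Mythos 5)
Your proof is correct and takes essentially the same route as the paper, whose entire argument is the one-line observation that conjugating the double Jacobi$_\infty$ identity by $\sigma_n$ yields \eqref{DJac infinty2}; you simply supply the supporting details (the sign-free localization identity, the normalization $\sigma_n\mathfrak{C}_n\sigma_n^{-1}=\mathfrak{C}_n$, and the matching prefactors) that the paper leaves implicit.
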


	\begin{proof} The claim follows by applying the conjugation \( \sigma_n \circ (\text{Equation}~\eqref{DJac infinty}) \circ \sigma_n^{-1} \), which transforms the original double $\mbox{Jacobi}_\infty$ identity into Equation~\eqref{DJac infinty2}.
	\end{proof}

\begin{defn}\label{Definition: homotopy double Poisson algebra}
	\begin{itemize}
		\item[(i)] A {\bf cyclic homotopy double Poisson algebra} is a graded vector space $A$ equipped with both an associative algebra structure and a cyclic double $L_\infty$-algebra structure, satisfying the \emph{double Leibniz$_\infty$ rule}: for all $n \geq 0$ and homogeneous elements $a_1, \dots, a_{n-1}, a_n', a_n'' \in A$,
		\begin{align*}
			\lbb a_1, \dots, a_n, a_{n+1}' a_{n+1}'' \rbb_n 
			&= \lbb a_1, \dots, a_n', \rbb_n \cdot a_{n+1}'' \\
			&\quad + (-1)^{|a_{n+1}'|(n - 2 + \sum\limits_{k=1}^{n} |a_k|)} \, a_{n+1}' \cdot \lbb a_1, \dots, a_n'' \rbb_n,
		\end{align*}
		where multiplication by $a_{n+1}''$ and $a_{n+1}'$ is understood to act on the rightmost and leftmost components of the tensor product, respectively.
		
		\item[(ii)] A {\bf double Poisson$_\infty$ algebra} (also called a {\bf homotopy double Poisson algebra}) is a graded algebra $A$ equipped with a double $L_\infty$-algebra structure that satisfies the double Leibniz$_\infty$ rule.
	\end{itemize}
\end{defn}

Next, we recall the following result of Fernández and Herscovich \cite{FH21}, which establishes a connection between ultracyclic pre-Calabi-Yau algebras and homotopy double Poisson algebras.

	\begin{thm}\label{Theorem: Bijection between ultracyclic pre-Calabi-Yau and homotopy double Poisson}\cite[Theorem~6.3]{FH21} \
	 Let $A=\oplus_{n \in \mathbb{Z}} A^n$ be a  finite dimensional  graded space.  For a good manageable special pre-Calabi-Yau structure $\{m_n\}_{n\geqslant1}$ on $A$, define a family of maps $\{\lbb-,\cdots,-\rbb_{n}: A^{\ot n}\rightarrow A^{\ot n}\}_{n\geqslant1}$ by
	 
	 \begin{eqnarray}\label{Eq:pre-CY and homotopy double Poisson}
	 	 \left(f_1 \otimes \cdots \otimes f_n\right)\left(\lbb a_1, \cdots, a_n\rbb_n\right)=s_{f_1, \cdots, f_n}^{a_1, \cdots, a_n} \zeta_A\left(m_{2 n-1}\left(a_n, s^{-1} f_n, \cdots, a_2, s^{-1} f_2, a_1\right), s^{-1} f_1\right)
	 \end{eqnarray}
		for all homogeneous elements $a_1, \cdots, a_n \in A$ and $f_1, \cdots, f_n \in A^{\vee}$, where
		$$
		\begin{aligned}
		 s_{f_1, \cdots, f_n}^{a_1, \cdots, a_n}=	&(-1)^{|a_n||f_1|+(n+1)\left(|a_n|+|f_1|\right)+\sum\limits_{j=1}^n(n-j)|a_j|+\sum\limits_{j=1}^n(j-1)|f_j|+\sum\limits_{1 \leqslant i<  j<  n}|a_i||a_j|+\sum\limits_{1<  i<  j \leqslant n}|f_i||f_j|+\sum\limits_{1<  i \leqslant j<  n}|f_i||a_j|}. 
		\end{aligned}
		$$
	The family of maps $\{\lbb-,\cdots,-\rbb_{n}\}_{n\geqslant1}$, together with the dg algebra structure on $A$,  defines  a homotopy double  Poisson algebra structure on the graded space $A$.
	
	 Moreover, the assignment
		$$
		\left\{\begin{array}{l}
			\text { good manageable special pre-Calabi-Yau} \\
			\text {  algebra structures }\{m_n\}_{n\geqslant1} \text {  on  } A
		\end{array}\right\} \rightarrow\left\{\begin{array}{l}
			\text {homotopy  double Poisson algebra } \\
			\text { structures }   \{\lbb-,\cdots,-\rbb_{n}\}_{n\geqslant1} \text{ on }A 
		\end{array}\right\}
		$$
		
		defined by \eqref{Eq:pre-CY and homotopy double Poisson} is a bijection. 
	\end{thm}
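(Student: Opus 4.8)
The plan is to read off the double brackets $\lbb-,\cdots,-\rbb_n$ as the adjoints of the odd $A_\infty$-operations $m_{2n-1}$ with respect to the nondegenerate pairing $\zeta_A$, and then to translate each structural axiom of the pre-Calabi-Yau algebra into the corresponding axiom of a homotopy double Poisson algebra. Since $A$ is finite-dimensional, a graded map from the $A\otimes s^{-1}A^\vee\otimes\cdots\otimes A$ summand of $(\partial_{-1}A)^{\otimes(2n-1)}$ into $A$ is equivalent, via $\zeta_A$, to an element of $\End(A^{\otimes n})$, and the sign $s_{f_1,\cdots,f_n}^{a_1,\cdots,a_n}$ in \eqref{Eq:pre-CY and homotopy double Poisson} is chosen exactly so that this element is the bracket $\lbb-,\cdots,-\rbb_n$. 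Goodness guarantees that $m_{2i}=0$ for $i>1$ and that the only nonvanishing components of $m_{2n-1}$ are the two alternating patterns; the second pattern $m_{2n-1}(s^{-1}A^\vee\otimes A\otimes\cdots)\subseteq s^{-1}A^\vee$ is forced by cyclicity once the first is fixed, so the data of $\{m_n\}_{n\geqslant 1}$ is interchangeable with that of $\{\lbb-,\cdots,-\rbb_n\}_{n\geqslant 1}$ together with the multiplication $m_2|_A$. This already yields the claimed bijection on the level of underlying graded data; it remains to match the axioms.

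First I would verify the symmetry axiom. The special (ultracyclic) condition on $m_{2n-1}$ says that permuting the $2n$ inputs by any $\widetilde{\sigma}$ of the special form leaves $\zeta_A(m_{2n-1}(\cdots),\,\cdot\,)$ invariant up to the Koszul sign. Under the dictionary \eqref{Eq:pre-CY and homotopy double Poisson}, $\widetilde{\sigma}$ acts on the pairs $(a_i,s^{-1}f_i)$ exactly as the underlying $\sigma\in\mathfrak{S}_n$ acts on $A^{\otimes n}$; comparing the two Koszul signs shows that $\lbb-,\cdots,-\rbb_n$ is skew-symmetric, i.e. it satisfies the $\mathfrak{S}_n$-equivariance required of a double $L_\infty$-algebra. (Weakening ``special'' to mere cyclicity yields only $\mathfrak{C}_n$-equivariance, which is the cyclic homotopy double Poisson case.)

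The core of the argument is the double $\mathrm{Jacobi}_\infty$ identity. I would pair the Stasheff identity \eqref{Eq: stasheff-id} for $\{m_n\}_{n\geqslant 1}$ in arity $2n$ against $\zeta_A$, evaluated on the alternating string $a_n\otimes s^{-1}f_n\otimes\cdots\otimes a_1\otimes s^{-1}f_1$. By goodness every surviving summand is a composite $m_{2i-1}\circ(\id^{\otimes\bullet}\otimes m_{2j-1}\otimes\id^{\otimes\bullet})$ of two odd operations, and applying the dictionary turns each such composite into a right-nested double bracket $\lbb\lbb-,\cdots,-\rbb_i,-,\cdots,-\rbb_{R,j}$; summing over the insertion positions of the inner $m_{2j-1}$ produces precisely the cyclic symmetrization $\sum_{\sigma\in\mathfrak{C}_n}\sgn(\sigma)\,\sigma\circ(-)\circ\sigma^{-1}$ of \eqref{DJac infinty2}, since shifting the inserted block along the alternating string corresponds to conjugating by a cyclic permutation and is compensated by cyclicity of the pairing. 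Because the defining formula reverses the order of the inputs, the identity that emerges is the right-nested form \eqref{DJac infinty2}, which by Lemma~\ref{Lemma:def homotopy double Lie2} is equivalent to the double $\mathrm{Jacobi}_\infty$ identity \eqref{DJac infinty}. The main obstacle lies here: one must show that the complicated sign $s_{f_1,\cdots,f_n}^{a_1,\cdots,a_n}$, the Stasheff signs, and the Koszul signs incurred by reshuffling each $s^{-1}f_i$ past the $a_j$ all conspire to give exactly the sign $(-1)^{i(j-1)}\sgn(\sigma)$ demanded by \eqref{DJac infinty2}. This is a delicate but mechanical bookkeeping that the choice of $s$ is engineered to pass.

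Finally I would extract the double $\mathrm{Leibniz}_\infty$ rule from those Stasheff summands in which the even operation $m_2$, rather than an odd $m_{2j-1}$, is the inner factor. Because $A$ is manageable, $m_2$ restricts to the associative product on $A$ and acts on $s^{-1}A^\vee$ through the induced left and right actions $\rhd,\lhd$; substituting a factorized input $a'_{n+1}a''_{n+1}$ and applying $\zeta_A$, the two ways $m_2$ can feed into $m_{2n-1}$ yield exactly the terms $\lbb a_1,\dots,a'_{n+1}\rbb_n\cdot a''_{n+1}$ and $(-1)^{\ast}a'_{n+1}\cdot\lbb a_1,\dots,a''_{n+1}\rbb_n$ of Definition~\ref{Definition: homotopy double Poisson algebra}, with $\ast$ accounting for moving $a'_{n+1}$ past the remaining inputs. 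Since the three axiom-translations above are each reversible equivalences, combining them with the data-level identification of the first step upgrades it to a bijection between good manageable special pre-Calabi-Yau structures and homotopy double Poisson structures, completing the proof. As the statement is due to Fern\'andez and Herscovich \cite[Theorem~6.3]{FH21}, the purpose of the argument sketched here is to recover their correspondence in the present notation.
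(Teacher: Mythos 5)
The paper does not prove this theorem: it is imported verbatim from Fern\'andez and Herscovich \cite[Theorem~6.3]{FH21} with no proof attached, so there is no internal argument to compare your proposal against. The only commentary the paper adds is the paragraph following the statement, which records that in \cite{FH21} the special (ultracyclic) hypothesis is used solely to obtain skew-symmetry of the brackets, cyclicity alone sufficing for the double $\text{Leibniz}_\infty$ and double $\text{Jacobi}_\infty$ identities; your parenthetical in the symmetry step reproduces exactly this observation, and your overall dictionary --- brackets as $\zeta_A$-adjoints of the $m_{2n-1}$, Stasheff identities on alternating strings translating into the double Jacobi and Leibniz axioms, reversibility of each translation giving the bijection --- is the expected reconstruction of their argument and matches how the present paper uses the theorem in the proof of Theorem~\ref{Thm:From RB infinity alegbra to double Poisson infinity}.

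Judged as a proof outline, two points need repair before it would count as a proof. For the Jacobi step the relevant Stasheff identity is the one of arity $2n-1$, applied to $a_n\otimes s^{-1}f_n\otimes\cdots\otimes s^{-1}f_2\otimes a_1$ and then paired with $s^{-1}f_1$; the arity-$2n$ identity evaluated on the full alternating string you wrote is termwise zero because of the boundary conditions in the definition of goodness, and carries no information. Relatedly, the assertion that ``by goodness every surviving summand is a composite of two odd operations'' conceals the actual mechanism: an inner $m_2$ applied to an adjacent pair $(a_i,s^{-1}f_i)$ or $(s^{-1}f_i,a_{i-1})$ lands in $s^{-1}A^\vee$ and creates two consecutive inputs of the same type, and it is this breaking of the alternating pattern (not the vanishing of $m_2$ itself) that kills the outer good operation; the $m_1$-terms do not vanish at all but must be absorbed as the $i=1$ and $j=1$ summands of \eqref{DJac infinty2}, using $\lbb-\rbb_1=d$. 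In the Leibniz step the relevant Stasheff identity contains three surviving $m_2$-positions (inner $m_2$ on the doubled $A$-slot, inner $m_2$ on the neighbouring $s^{-1}A^\vee\otimes A$ pair, and outer $m_2$), and all three are needed to recover the three terms of the double $\text{Leibniz}_\infty$ rule; your count of ``two ways $m_2$ can feed into $m_{2n-1}$'' misattributes one of them. These are exactly the computations carried out in \cite{FH21} and, for the reverse direction, in Appendix~\ref{Appendix: Proof of Lemma: A_infinity structures on homotopy RB dg algebras} of the present paper, so the gaps are of bookkeeping rather than of ideas.
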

	
{In fact, when Fernández and Herscovich prove Theorem~\ref{Theorem: Bijection between ultracyclic pre-Calabi-Yau and homotopy double Poisson} in \cite{FH21}, the assumption that the pre-Calabi-Yau structure is ultracyclic is used solely to guarantee that  all the operations $\{\lbb -,\dots,- \rbb_n\}_{n\geqslant 1}$ are skew-symmetric. In verifying that the family \(\{\lbb-,\cdots,-\rbb_n\}_{n\geqslant 1}\) satisfies the double $\text{Leibniz}_\infty$ rule and the double $\text{Jacobi}_\infty$ identities, only the cyclicity of the pre-Calabi-Yau structure is required. Therefore, without assuming that the pre-Calabi-Yau algebra is special, the bijection in the above theorem extends to a correspondence between the class of good manageable pre-Calabi-Yau structures and the class of cyclic homotopy double Poisson algebra structures. Thus we have 

\begin{thm}
	\label{Theorem: Bijection between pre-Calabi-Yau and cyclic homotopy double Poisson}\
	Let $A=\oplus_{n \in \mathbb{Z}} A^n$ be a  finite dimensional  graded space.  Given a good manageable  pre-Calabi-Yau structure $\{m_n\}_{n\geqslant1}$ on $A$, define a family of maps $\{\lbb-,\cdots,-\rbb_{n}: A^{\ot n}\rightarrow A^{\ot n}\}_{n\geqslant1}$  as in \eqref{Eq:pre-CY and homotopy double Poisson}.
	Then the family of maps $\{\lbb-,\cdots,-\rbb_{n}\}_{n\geqslant1}$, together with the dg algebra structure on $A$,  defines  a cyclic  homotopy double  Poisson algebra structure on the graded space $A$.
	
	Moreover, the assignment
	$$
	\left\{\begin{array}{l}
		\text { good manageable pre-Calabi-Yau} \\
		\text {  algebra structures }\{m_n\}_{n\geqslant1} \text {  on  } A
	\end{array}\right\} \rightarrow\left\{\begin{array}{l}
		\text {cyclic homotopy  double Poisson algebra } \\
		\text { structures }   \{\lbb-,\cdots,-\rbb_{n}\}_{n\geqslant1} \text{ on }A 
	\end{array}\right\}
	$$
	
	defined by \eqref{Eq:pre-CY and homotopy double Poisson} is a bijection. 
	\end{thm}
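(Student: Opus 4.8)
The plan is to rerun the argument of Fern\'andez and Herscovich for Theorem~\ref{Theorem: Bijection between ultracyclic pre-Calabi-Yau and homotopy double Poisson} (that is, \cite[Theorem~6.3]{FH21}) while keeping careful track of exactly where the \emph{special} (ultracyclic) hypothesis is used. Since the defining formula \eqref{Eq:pre-CY and homotopy double Poisson} is literally the same in both statements, the whole point is to isolate those structural identities whose verification relies only on the \emph{good manageable} data---namely the associative product fixed by manageability and the cyclicity of the $A_\infty$-structure on $\partial_{-1}A$ that is built into \emph{every} pre-Calabi--Yau algebra---and to observe that the special hypothesis is invoked solely to upgrade cyclic symmetry to full skew-symmetry of the resulting brackets.

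Concretely, I would verify the requirements of a cyclic homotopy double Poisson structure (Definitions~\ref{Def:HDLA} and~\ref{Definition: homotopy double Poisson algebra}) in turn. For the double $\mathrm{Leibniz}_\infty$ rule and the double $\mathrm{Jacobi}_\infty$ identity I would re-use the computations of \cite{FH21} essentially verbatim: those manipulations translate the Stasheff identities for $\{m_n\}_{n\geqslant 1}$ on $\partial_{-1}A$ into the bracket identities through the nondegenerate pairing $\zeta_A$, and at no point do they permute arguments beyond the cyclic relabelling already supplied by the $(-1)$-cyclicity of the $m_{2n-1}$; in particular they never call on the skew-symmetric enhancement that the special condition provides. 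The one genuinely new point is cyclic symmetry (condition (i) of Definition~\ref{Def:HDLA}), which in the ultracyclic case was absorbed into the stronger skew-symmetry and so was not checked separately.

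For that new point I would argue directly from \eqref{Eq:pre-CY and homotopy double Poisson}. Applying the $(-1)$-cyclic property of $m_{2n-1}$ with respect to $\zeta_A$ \emph{twice} to $\zeta_A\big(m_{2n-1}(a_n, s^{-1}f_n,\cdots,a_1),s^{-1}f_1\big)$ shifts the interleaved string $(a_n,s^{-1}f_n,\dots,a_1,s^{-1}f_1)$ of $2n$ entries by two slots, which is exactly one step of the generating $n$-cycle acting simultaneously on the inputs $a_1,\dots,a_n$ and on the duals $f_1,\dots,f_n$. Comparing the accumulated Koszul sign with the prefactor $s_{f_1,\cdots,f_n}^{a_1,\cdots,a_n}$ before and after the shift should yield precisely the factor $\sgn(\sigma)$ demanded by $\sigma\circ\lbb-,\cdots,-\rbb_n\circ\sigma^{-1}=\sgn(\sigma)\lbb-,\cdots,-\rbb_n$ for $\sigma$ the $n$-cycle, and hence for all of $\mathfrak{C}_n$ by iteration. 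I expect the sign bookkeeping here---reconciling the two applications of the cyclic property (each contributing a degree-dependent sign and a factor $(-1)^{2n-1}$) against the change in $s_{f_1,\cdots,f_n}^{a_1,\cdots,a_n}$ under the cyclic relabelling---to be the main technical obstacle, since each term carries several quadratic-in-degree contributions that must cancel cleanly for the bare $\sgn(\sigma)$ to survive.

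Finally, for the bijection I would observe that \eqref{Eq:pre-CY and homotopy double Poisson} is pointwise invertible: $\zeta_A$ is nondegenerate and the prefactor $s_{f_1,\cdots,f_n}^{a_1,\cdots,a_n}$ is a unit, so each $m_{2n-1}$ of the good interleaved type determines and is determined by $\lbb-,\cdots,-\rbb_n$, while goodness and manageability rigidly fix $m_2$ and force all remaining components to vanish. The assignment is therefore a bijection on the underlying raw data; the identities checked above then show that good manageable pre-Calabi--Yau structures correspond exactly to \emph{cyclic} homotopy double Poisson structures, and imposing the special condition on the left (equivalently, skew-symmetry of the brackets on the right) recovers \cite[Theorem~6.3]{FH21} as the special case.
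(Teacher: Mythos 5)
Your proposal is correct and follows essentially the same route as the paper: the paper's justification is precisely the observation that in \cite{FH21} the special (ultracyclic) hypothesis is used only to upgrade the brackets from cyclically symmetric to skew-symmetric, while the double $\mathrm{Leibniz}_\infty$ and double $\mathrm{Jacobi}_\infty$ identities need only the cyclicity of the $A_\infty$-structure on $\partial_{-1}A$, and the bijection follows from the nondegeneracy of $\zeta_A$. Your additional explicit check of cyclic symmetry (applying the $(-1)$-cyclic property twice to shift the interleaved string by one step of the $n$-cycle) is exactly the mechanism the paper leaves implicit.
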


}

As a direct consequence of Theorem~\ref{Theorem: Bijection between ultracyclic pre-Calabi-Yau and homotopy double Poisson} and Theorem~\ref{Theorem: Bijection between pre-Calabi-Yau and cyclic homotopy double Poisson}  , we have the following result:
\begin{cor} \label{Coro: Correspongdence between some particular pre-Calabi-Yau and homotopy double Poisson}
The following three maps are bijections via \eqref{Eq:pre-CY and homotopy double Poisson}:
	\begin{align*}
&\left\{\begin{array}{l}
	\text { fine  pre-Calabi-Yau algebra } \\
	\text {  structures }\{m_n\}_{n\geqslant1} \text {  on  $ A$} 
\end{array}\ \ \ \ \ \ \ \ \right\} \rightarrow\left\{\begin{array}{l}
	\text {cyclic homotopy  double Lie algebra} \\
	\text {structures }   \{\lbb-,\cdots,-\rbb_{n}\}_{n\geqslant1} \text{ on }A 
\end{array}\ \ \ \ \ \ \ \ \right\},
\\
&\left\{\begin{array}{l}
	\text { good manageable pre-Calabi-Yau} \\
	\text {  algebra structures }\{m_n\}_{n\geqslant1} \text {  on  } A
\end{array}\right\} \rightarrow\left\{\begin{array}{l}
	\text {cyclic homotopy  double Poisson algebra} \\
	\text {structures }   \{\lbb-,\cdots,-\rbb_{n}\}_{n\geqslant1} \text{ on }A 
\end{array}\ \right\},\\
&\left\{\begin{array}{l}
	\text { fine special pre-Calabi-Yau} \\
	\text {  algebra structures }\{m_n\}_{n\geqslant1} \text {  on  } A
\end{array}\ \ \ \ \right\} \rightarrow\left\{\begin{array}{l}
	\text { homotopy  double Lie algebra} \\
	\text {    $\{\lbb-,\cdots,-\rbb_{n}\}_{n\geqslant1}$ on $A$} 
\end{array}\ \ \ \ \ \ \ \ \ \ \ \ \ \ \ \ \ \ ~\right\}.
\end{align*}
\end{cor}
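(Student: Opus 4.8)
The plan is to derive all three bijections by restricting the two bijections already in hand—Theorem~\ref{Theorem: Bijection between ultracyclic pre-Calabi-Yau and homotopy double Poisson} and Theorem~\ref{Theorem: Bijection between pre-Calabi-Yau and cyclic homotopy double Poisson}—to suitable subclasses on each side, where the distinguishing feature is precisely the vanishing of the binary operation $m_2$. The second of the three asserted bijections is nothing but Theorem~\ref{Theorem: Bijection between pre-Calabi-Yau and cyclic homotopy double Poisson} restated, so there is nothing to prove there; the work is entirely in the first and third.

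First I would record the crucial structural observation that, under the assignment \eqref{Eq:pre-CY and homotopy double Poisson}, the bracket $\lbb a_1,\dots,a_n\rbb_n$ is built solely from the odd map $m_{2n-1}$, whereas the associative multiplication carried by the resulting (cyclic) homotopy double Poisson algebra is exactly $m_2$ restricted to $A$. In particular the two pieces of data—brackets and multiplication—are read off from disjoint families among the $m_i$, so that setting $m_2=0$ leaves every bracket $\lbb-,\dots,-\rbb_n$ untouched.

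Next I would make precise that, on the pre-Calabi-Yau side, a good manageable structure is \emph{fine} exactly when $m_2$ vanishes: since manageability expresses $m_2$ on all of $\partial_{-1}A$ in terms of the multiplication on $A$ together with its induced actions $\rhd$, $\lhd$ on $A^\vee$, and these actions are themselves induced by that multiplication, the condition $m_2=0$ is equivalent to the multiplication being zero. The same equivalence holds verbatim in the special case, identifying fine special structures with good manageable special structures having $m_2=0$. On the homotopy double Poisson side I would then observe that a zero multiplication makes the double $\mathrm{Leibniz}_\infty$ rule vacuous—both sides vanish once $a'_{n+1}a''_{n+1}=0$—so that the remaining data is precisely a (cyclic) double $L_\infty$-structure, i.e. a (cyclic) homotopy double Lie algebra. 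Restricting Theorem~\ref{Theorem: Bijection between pre-Calabi-Yau and cyclic homotopy double Poisson} to the sub-bijection cut out by $m_2=0$ then yields the first claimed bijection, and restricting Theorem~\ref{Theorem: Bijection between ultracyclic pre-Calabi-Yau and homotopy double Poisson} in the same way yields the third.

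The only genuinely delicate point, and the step I expect to require the most care, is confirming that the equivalence ``$m_2=0$ on $\partial_{-1}A$'' $\Longleftrightarrow$ ``zero multiplication'' is an exact match under the manageability constraint, and that the cyclic-symmetry and double $\mathrm{Jacobi}_\infty$ conditions transported by \eqref{Eq:pre-CY and homotopy double Poisson} survive the restriction unchanged. Both follow from the fact that these conditions never invoke $m_2$, but this should be checked against the explicit sign $s^{a_1,\dots,a_n}_{f_1,\dots,f_n}$ to be certain that no hidden $m_2$-dependence enters through the pairing; once that is verified, the three bijections follow formally from the two ambient ones.
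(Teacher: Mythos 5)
Your proposal is correct and matches the paper's (implicit) argument: the paper states this corollary without proof as a direct consequence of Theorems~\ref{Theorem: Bijection between ultracyclic pre-Calabi-Yau and homotopy double Poisson} and~\ref{Theorem: Bijection between pre-Calabi-Yau and cyclic homotopy double Poisson}, and your restriction to the locus $m_2=0$ (equivalently, trivial multiplication, which makes the double $\mathrm{Leibniz}_\infty$ rule vacuous) is exactly the intended reasoning. The worry about hidden $m_2$-dependence is harmless, since the sign $s^{a_1,\dots,a_n}_{f_1,\dots,f_n}$ depends only on degrees.
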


In Theorem~\ref{Thm: Pre-Calabi-Yau structure on homotopy RB dg algebras}, we constructed pre-Calabi-Yau structures from homotopy Rota-Baxter algebras. By combining this construction with Theorem~\ref{Theorem: Bijection between ultracyclic pre-Calabi-Yau and homotopy double Poisson} and Theorem~\ref{Theorem: Bijection between pre-Calabi-Yau and cyclic homotopy double Poisson}, we obtain the following result, which provides a method for constructing homotopy double Poisson algebras from homotopy Rota-Baxter structures.

\begin{thm} \label{Thm:From RB infinity alegbra to double Poisson infinity}
	Let $(A, B)$ be a homotopy Rota-Baxter interactive pair, where the acting algebra $A$ is finite-dimensional and the base algebra $B$ is locally finite-dimensional. Let $\{T_n : (A^\vee)^{\otimes n} \to A\}_{n \geqslant 1}$ be a relative differential graded homotopy Rota-Baxter operator on $A$.
	
	Define a sequence of maps $\{\lbb -, \ldots, - \rbb_n : B^{\otimes n} \to B^{\otimes n} \}_{n \geqslant 1}$ by setting $\lbb - \rbb_1 = d_B$, and for all $n \geqslant 1$,
	\begin{equation} \label{Eq:Homotopy double Poisson construction}
		\lbb -, \ldots, - \rbb_{n+1} := \Psi^n(\id_{A^{\otimes n}}),
	\end{equation}
	where the map $\Psi^n$ is the composition:
	\[
	\Psi^n : \End(A^{\otimes n}) \cong A^{\otimes n} \otimes (A^\vee)^{\otimes n} 
	\xrightarrow{\id^{\otimes n} \otimes T_n} A^{\otimes (n+1)} 
	\xrightarrow{\Phi^{\otimes (n+1)}} \End(B)^{\otimes (n+1)} 
	\to \End(B^{\otimes (n+1)}),
	\]
	and $\Phi : A \to \End(B)$ denotes the left $A$-action on $B$, i.e., $\Phi(a)(b) := a \rhd b$.
	
	Then, 
	\begin{itemize}
		\item[\rm (i)]If each $T_n$ is  cyclic, the collection $\{\lbb -, \ldots, - \rbb_n\}_{n \geqslant 1}$ defines a cyclic homotopy double Poisson algebra structure on $B$.
		\item[\rm (ii)] If each $T_n$ is ultracyclic, the collection $\{\lbb -, \ldots, - \rbb_n\}_{n \geqslant 1}$ defines a homotopy double Poisson algebra structure on $B$.
	\end{itemize}
\end{thm}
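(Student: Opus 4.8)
The plan is to produce the double Poisson$_\infty$ structure by composing the two constructions already available and then to identify the resulting bracket with the explicit formula \eqref{Eq:Homotopy double Poisson construction}. Since $(A,B)$ is a cyclic (resp. ultracyclic) homotopy Rota-Baxter interactive pair, Theorem~\ref{Thm: Pre-Calabi-Yau structure on homotopy RB dg algebras} endows $B$ with a good manageable (resp. good manageable special) pre-Calabi-Yau structure $\{m_n\}_{n\geqslant 1}$, whose only nontrivial higher operations $m_{2n+1}$ are given explicitly in Lemma~\ref{Lem: A_infinity structures on homotopy RB dg algebras} in terms of $T_n$ and the left action $\rhd$. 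Applying Theorem~\ref{Theorem: Bijection between pre-Calabi-Yau and cyclic homotopy double Poisson} (resp. Theorem~\ref{Theorem: Bijection between ultracyclic pre-Calabi-Yau and homotopy double Poisson}) to this structure yields a cyclic homotopy double Poisson (resp. homotopy double Poisson) structure $\{\lbb-,\dots,-\rbb^{\mathrm{FH}}_{n}\}_{n\geqslant 1}$ on $B$ via formula \eqref{Eq:pre-CY and homotopy double Poisson}, whose arity-one component recovers the differential $d_B$. Although those theorems are stated for finite-dimensional target spaces, the defining formula is evaluated one graded piece at a time, so the correspondence applies verbatim to the locally finite-dimensional $B$. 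It therefore remains to prove that $\lbb-,\dots,-\rbb^{\mathrm{FH}}_{n+1}=\Psi^n(\id_{A^{\otimes n}})$ for every $n\geqslant 1$.

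For this identification I would compute both sides as elements of $B^{\otimes(n+1)}$ and compare their pairings against an arbitrary $f_1\otimes\cdots\otimes f_{n+1}\in(B^\vee)^{\otimes(n+1)}$. On the one hand, substituting the explicit expression for $m_{2n+1}$ from Lemma~\ref{Lem: A_infinity structures on homotopy RB dg algebras} into \eqref{Eq:pre-CY and homotopy double Poisson} and unwinding the pairing $\zeta_B$ rewrites the left-hand coefficient as a scalar of the shape $\pm\,\langle T_n(\kappa(b_?\otimes f_?)\otimes\cdots),\kappa(b_?\otimes f_?)\rangle$, where $\kappa:B\otimes B^\vee\to A^\vee$ is the map of Definition~\ref{Def: derivation and strong derivation on interactive pair} and $\langle-,-\rangle$ is the natural pairing of $A$ with $A^\vee$. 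On the other hand, unpacking $\Psi^n(\id_{A^{\otimes n}})$: write $\id_{A^{\otimes n}}=\sum e_{i_1}\otimes\cdots\otimes e_{i_n}\otimes e^{i_n}\otimes\cdots\otimes e^{i_1}$ in a basis $\{e_i\}$ of $A$ with dual basis $\{e^i\}$, apply $\id^{\otimes n}\otimes T_n$ and then $\Phi^{\otimes(n+1)}$, and evaluate on $b_1\otimes\cdots\otimes b_{n+1}$; the resulting coefficient against $f_1\otimes\cdots\otimes f_{n+1}$ involves the scalars $f_j(e_{i_j}\rhd b_j)$. The key simplification is the resolution-of-identity relation $\sum_i f_j(e_i\rhd b_j)\,e^i=\pm\,\kappa(b_j\otimes f_j)$, which is immediate from the definition of $\kappa$: summing over the basis indices collapses each dual factor into a $\kappa(b_j\otimes f_j)$ inserted into the corresponding slot of $T_n$, turning $\Psi^n(\id_{A^{\otimes n}})$ into a scalar of exactly the same shape.

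The two scalars so obtained differ only by a cyclic rotation of their arguments: the Fernández–Herscovich formula feeds $m_{2n+1}$ its inputs in the order prescribed by \eqref{Eq:pre-CY and homotopy double Poisson} and singles out the slot paired with $s^{-1}f_1$, whereas $\Psi^n$ distinguishes the slot produced by $T_n$, i.e.\ the $(n{+}1)$-st tensor factor of $B^{\otimes(n+1)}$. Because the identification $\End(A^{\otimes n})\cong A^{\otimes n}\otimes(A^\vee)^{\otimes n}$ reverses the order of the dual factors (as in the convention $\End(V\otimes W)\cong V\otimes W\otimes W^\vee\otimes V^\vee$ of the Preliminaries), this discrepancy is precisely a cyclic rotation of the $(n{+}1)$-tuple of $\kappa$'s, and I would close it using the defining cyclicity relation of a cyclic homotopy relative Rota-Baxter operator. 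In the ultracyclic case the additional skew-symmetry of $T_n$ upgrades the cyclic bracket to a skew-symmetric double $L_\infty$ bracket, matching the passage from Theorem~\ref{Theorem: Bijection between pre-Calabi-Yau and cyclic homotopy double Poisson} to Theorem~\ref{Theorem: Bijection between ultracyclic pre-Calabi-Yau and homotopy double Poisson}.

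The main obstacle, and essentially the entire labour of the proof, is the sign bookkeeping in this last comparison: one must reconcile the Koszul signs from the isomorphism $\End(A^{\otimes n})\cong A^{\otimes n}\otimes(A^\vee)^{\otimes n}$ and from $\Phi^{\otimes(n+1)}$, the factor $(-1)^\gamma$ in the definition of $m_{2n+1}$, the signs in $\zeta_B$ and in $\kappa$, the elaborate prefactor $s^{a_1,\dots,a_n}_{f_1,\dots,f_n}$ of \eqref{Eq:pre-CY and homotopy double Poisson}, and the sign generated by each application of the cyclicity relation for $T_n$. I expect these contributions to cancel exactly, but verifying the cancellation for general $n$ is the delicate point; I would organize it by first checking the degree-zero (sign-free) combinatorics and then tracking the Koszul signs slot by slot.
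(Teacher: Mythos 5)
Your proposal is correct and follows essentially the same route as the paper: invoke Theorem~\ref{Thm: Pre-Calabi-Yau structure on homotopy RB dg algebras} together with the Fern\'andez--Herscovich correspondence, expand $\id_{A^{\otimes n}}$ in a basis, collapse the sums $\sum_i f_j(e_i\rhd b_j)\,e^i$ into $\kappa(b_j\otimes f_j)$, and use one application of the cyclicity of $T_n$ to match $\Psi^n(\id_{A^{\otimes n}})$ with the bracket produced by \eqref{Eq:pre-CY and homotopy double Poisson}. The only detail worth noting is that the paper finds the two brackets agree up to a global factor $(-1)^n$ in arity $n$, which is harmless since such a uniform rescaling preserves the cyclic symmetry, the double Jacobi$_\infty$ identity, and the Leibniz$_\infty$ rule.
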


%
%
%
%

	\begin{proof} 
		Let $\{e_i\}_{i\in I}$ be a homogeneous basis of $A$ and $\{e^i\}_{i\in I}$ be the corresponding dual basis. Then  $ \id_{A^{\ot n}}\in \End(A^{\ot n})$  corresponds to the element $ \sum\limits_{ i_1,\cdots,i_n}   e_{i_1} \otimes  \dots\otimes e_{i_n}\otimes e^{ i_n}\otimes \dots\otimes e^{i_1} \in A^{\ot n} \ot (A^\vee)^{\ot n}$. Thus, we can write $$\lbb-,\cdots,-\rbb_{n+1}=\Phi^{\ot n+1}\Big( \sum\limits_{ i_1,\cdots,i_n} (-1)^{(n-1)(\sum\limits_{k=1}^n|e_{i_k}|)}e_{i_1} \otimes \dots\otimes e_{i_n}\otimes T_n (e^{ i_n}\otimes \dots\otimes e^{i_1})\Big) .$$
		
	It remains to verify that the image of the $A_\infty$-structure $\{m_n\}_{n \geqslant 1}$ under the construction given in \eqref{Eq:pre-CY and homotopy double Poisson}, as described in Theorems~\ref{Theorem: Bijection between ultracyclic pre-Calabi-Yau and homotopy double Poisson} and~\ref{Theorem: Bijection between pre-Calabi-Yau and cyclic homotopy double Poisson}, coincides with the family \(\{\lbb -, \ldots, - \rbb_n\}_{n \geqslant 1}\) defined by \eqref{Eq:Homotopy double Poisson construction}.
		Let $b_1,\cdots,b_n\in B$ and $f_1,\cdots,f_n\in B^\vee$
		\begin{align*}
			  &(f_1 \otimes \cdots \otimes f_{n+1})(\lbb b_1, \cdots, b_{n+1}\rbb_{n+1}) \\
			  =&(f_1 \otimes \cdots \otimes f_{n+1})\Phi^{\ot n+1}\Big( \sum\limits_{ i_1,\cdots,i_n} (-1)^{(n-1)(\sum\limits_{k=1}^n|e_{i_k}|)}e_{i_1} \otimes \dots\otimes e_{i_n}\otimes T_n (e^{ i_n}\otimes \dots\otimes e^{i_1})\Big)( b_1\otimes\cdots\otimes b_{n+1}) \\
			  =&  \sum\limits_{ i_1,\cdots,i_n}(-1)^{(n-1)(\sum\limits_{k=1}^n(|e_{i_k}|+|b_k|))+\sum\limits_{1\leqslant i<  j\leqslant n+1} |b_i||f_{j}|+\sum\limits_{1\leqslant s<  k\leqslant j\leqslant n} (|b_j|+|f_{s}|)|e_{i_k}|+|f_{n+1}|(\sum\limits_{k=1}^n|e_{i_k}|)}\\
			  & \ \ \ \ \ \ f_1(e_{i_1}\rhd b_1)\cdots f_n(e_{i_n}\rhd b_n)f_{n+1}\big(T_n (e^{ i_n}\otimes \dots\otimes e^{i_1})\rhd b_{n+1} \big) \\
			  =&  (-1)^{(n-1)(\sum\limits_{k=1}^n|f_k|)+\sum\limits_{1\leqslant i<  j\leqslant n+1} |b_i||f_{j}|+\sum\limits_{1\leqslant s<  k\leqslant j\leqslant n} (|b_j|+|f_{s}|)(|b_k|+|f_k|)+|f_{n+1}|(\sum\limits_{k=1}^n(|b_k|+|f_k|))} f_{n+1}\Big( T_n\big(\kappa(b_n\otimes f_n),\cdots, \kappa(b_1\otimes f_1)\big)\rhd b_{n+1}\Big) \\
			  =&  (-1)^{(n-1)(\sum\limits_{k=1}^{n+1}|f_k|)+\sum_{1\leqslant i<  j\leqslant n+1} |b_i||f_{j}|+\sum\limits_{1\leqslant s<  k\leqslant j\leqslant n} (|b_j|+|f_{s}|)(|b_k|+|f_k|)+|f_{n+1}||b_{n+1}|}\langle  T_n\big(\kappa(b_n\otimes f_n),\cdots, \kappa(b_1\otimes f_1)\big),\kappa (b_{n+1}\otimes f_{n+1}) \rangle  \\
			    =&  (-1)^{(n-1)(\sum\limits_{k=1}^{n+1}|f_k|)+\sum\limits_{1\leqslant i<  j\leqslant n+1} |b_i||f_{j}|+\sum\limits_{1\leqslant s<  k\leqslant j\leqslant n} (|b_j|+|f_{s}|)(|b_k|+|f_k|)+|f_{n+1}||b_{n+1}|+n+(|b_{n+1}|+|f_{n+1}|)(\sum\limits_{k=1}^n(|b_k|+|f_k|))}\\
			  & \ \ \ \ \ \ \  \langle  T_n\big(\kappa(b_{n+1}\otimes f_{n+1}),\cdots, \kappa(b_2\otimes f_2)\big),\kappa (b_{1}\otimes f_{1}) \rangle  \\
			   =&(-1)^ns_{f_1, \cdots, f_{n+1}}^{b_1, \cdots, b_{n+1}} \zeta_B(m_{2 n+1}(b_{n+1}\otimes s^{-1} f_{n+1}\otimes \cdots\otimes b_2\otimes s^{-1} f_2\otimes b_1), s^{-1} f_1),
		\end{align*}
	where $\{m_n\}_{n\geqslant  1}$ is defined as Lemma~\ref{Lem: A_infinity structures on homotopy RB dg algebras}. Thus, the image of the operation $m_{2n-1}$ under the construction given in \eqref{Eq:pre-CY and homotopy double Poisson} coincides with \(\lbb -,\ldots,- \rbb_n\) up to a sign $(-1)^n$, as defined in \eqref{Eq:Homotopy double Poisson construction}, for all $n \geqslant 1$. By Theorem~\ref{Thm: Pre-Calabi-Yau structure on homotopy RB dg algebras}, if each $T_n$ is cyclic (resp. ultracyclic), the collection $\{m_n\}_{n \geqslant 1}$ defines a cyclic (resp. ultracyclic) pre-Calabi–Yau algebra structure on $B$. Consequently, by Theorems~\ref{Theorem: Bijection between ultracyclic pre-Calabi-Yau and homotopy double Poisson} and~\ref{Theorem: Bijection between pre-Calabi-Yau and cyclic homotopy double Poisson}, the family \(\{\lbb -, \ldots, - \rbb_n\}_{n \geqslant 1}\) endows $B$ with a homotopy double Poisson algebra structure (resp. cyclic homotopy double Poisson algebra structure).
	\end{proof}
	
As a corollary, we have the following result:

\begin{cor}
	Let $A$ be a finite-dimensional dg algebra, and let $B$ be a locally finite-dimensional dg left $A$-module. Suppose $\{T_n : (A^\vee)^{\otimes n} \to A\}_{n \geqslant 1}$ is a homotopy relative Rota-Baxter operator on $A$. 
	
	If each $T_n$ is ultracyclic (resp. cyclic), then the family of operations $\{\lbb -, \ldots, - \rbb_n\}_{n \geqslant 1}$ defined in Theorem~\ref{Thm:From RB infinity alegbra to double Poisson infinity} endows $B$ with  a double $L_\infty$-algebra (resp. cyclic double $L_\infty$-algebra) structure.
\end{cor}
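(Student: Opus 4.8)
The plan is to deduce this corollary from Theorem~\ref{Thm:From RB infinity alegbra to double Poisson infinity} by realizing $B$ as the base algebra of a homotopy Rota-Baxter interactive pair with \emph{trivial} base multiplication, and then observing that forgetting this trivial multiplication turns the resulting homotopy double Poisson structure into the desired homotopy double Lie structure. First I would invoke Example~\ref{Exm: interactive pair}(2): regard $B$ as a dg algebra equipped with the trivial multiplication $*=0$, let $A$ act on $B$ by the given module action $\rhd$, and let $B$ act trivially on $A$, i.e. $b\btr a=0$. The compatibility condition of Definition~\ref{Definition: Interactive Pair} then holds because both sides vanish, so $(A,B)$ is an interactive pair, and the finite-dimensionality hypotheses of the theorem ($A$ finite-dimensional, $B$ locally finite-dimensional) are exactly the ones assumed here.

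Next I would verify that each $T_n$ is automatically an $n$-derivation relative to $B$ in the sense of Definition~\ref{Def: derivation and strong derivation on interactive pair}. Since $*=0$ and the induced right action $\btl$ of $B$ on $A^\vee$ is likewise trivial (being induced by $\btr=0$), every term in the defining identity~\eqref{Relation:Leibniz identity for homotopy RB} vanishes, so that identity is satisfied vacuously. As each $T_n$ is by hypothesis cyclic, Proposition~\ref{Prop: Cyclic Leibniz identity for homotopy RB} upgrades it to a \emph{strong} $n$-derivation relative to $B$. Consequently $(A,B)$ together with $\{T_n\}_{n\geqslant 1}$ forms a strong homotopy Rota-Baxter interactive pair, which is cyclic or ultracyclic precisely according to the corresponding hypothesis on the operators $T_n$.

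I would then apply Theorem~\ref{Thm:From RB infinity alegbra to double Poisson infinity} verbatim: it produces on $B$ a cyclic homotopy double Poisson structure when each $T_n$ is cyclic, and a genuine homotopy double Poisson structure when each $T_n$ is ultracyclic, with brackets $\{\lbb-,\ldots,-\rbb_n\}_{n\geqslant 1}$ given by~\eqref{Eq:Homotopy double Poisson construction}. The final step is to forget the multiplication. By Definition~\ref{Definition: homotopy double Poisson algebra}, a (cyclic) homotopy double Poisson structure consists of the associative product on $B$ together with a (cyclic) double $L_\infty$-structure $\{\lbb-,\ldots,-\rbb_n\}_{n\geqslant 1}$ subject to the double Leibniz$_\infty$ rule. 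Because the product on $B$ is identically zero, the double Leibniz$_\infty$ rule degenerates to the trivial identity $0=0$, and what survives is exactly a (cyclic) double $L_\infty$-algebra, i.e. the claimed (cyclic) homotopy double Lie algebra structure on $B$.

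This argument is essentially a bookkeeping reduction, so I do not expect a genuine obstacle. The only point requiring care is confirming that the trivial module action $\btr=0$ (and hence $\btl=0$) really forces every term of the $n$-derivation identity to vanish; once that is checked, no nontrivial compatibility between $\{T_n\}_{n\geqslant 1}$ and the base structure needs to be imposed, and the corollary follows immediately from the theorem together with the vacuity of the double Leibniz$_\infty$ rule in the trivial-multiplication case.
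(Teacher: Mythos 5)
Your proposal is correct and is precisely the reduction the paper intends: the corollary is stated as an immediate consequence of Theorem~\ref{Thm:From RB infinity alegbra to double Poisson infinity} via Example~\ref{Exm: interactive pair}(2) (trivial multiplication on $B$, trivial action of $B$ on $A$), with the derivation conditions holding vacuously and the double Leibniz$_\infty$ rule degenerating so that only the (cyclic) double $L_\infty$-structure remains. You also correctly track that the ultracyclic hypothesis yields the skew-symmetric (i.e.\ genuine double $L_\infty$) case while mere cyclicity yields only the cyclic double $L_\infty$ case, matching the theorem's two clauses.
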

	
%
	
	\begin{remark}\ 
		
		\begin{itemize}
			\item [(i)]
		In fact, the assumption that $B$ is locally finite-dimensional in Theorem~\ref{Thm:From RB infinity alegbra to double Poisson infinity} is not essential. When $B$ is not locally finite-dimensional, the result still holds, and the proof can be carried out through direct computation.
			\item [(ii)] The construction in (\ref{Eq:Homotopy double Poisson construction}) serves as a  homotopy generalization of the construction in (\ref{double bracket}).
		\end{itemize}
	
	\end{remark}

\medskip

\subsection{Homotopy Rota-Baxter algebras and associative Yang-Baxter-infinity equation}\ 
	
	In \cite{Sched09}, Schedler introduced the notion of the \emph{associative Yang-Baxter-infinity equation} and established a one-to-one correspondence between homotopy double Lie algebra structures and skew-symmetric solutions of this equation.

		\begin{defn}\label{Def:HAYBE}\cite{Sched09}
				Let $A$ be a graded associative algebra. A solution of  {\bf associative Yang-Baxter-infinity equation}  is a family of elements $\left\{r_n \in A^{\otimes n}\right\}_{n \geqslant1}$ where each $r_n$ has degree  $ n-2$,  satisfying, for all $n \geqslant1$,
				$$
				\sum\limits_{i+j=n+1}(-1)^{(j+1)i} \sum\limits_{\sigma \in \mathfrak{C}_{n}}\sgn (\sigma) r_i^{\sigma(1), \sigma(2), \cdots, \sigma(i)} r_j^{\sigma(i), \sigma(i+1), \sigma(i+2), \cdots, \sigma(n)}=0.
				$$ 
				If, for all $n\geqslant1$, the element $r_n$ satisfies $\sgn(\sigma)r_n =r_n^{\sigma(1), \sigma(2), \cdots, \sigma(n)}$, then the solution is called   {\bf skew-symmetric}. 
			\end{defn}

		\begin{exam}
				Let $\{r_n\}_{n\geqslant1}$ be a skew-symmetric solution of associative Yang-Baxter-infinity equation. For small $n$, the associative Yang-Baxter-infinity equation yields the following:
				\begin{itemize}
						\item [(i)] When $n=1$, $|r_1|=-1$, $r_1\cdot r_1=0$, which implies that the operator $ \partial=[r_1,-]:A\rightarrow A$ defines a differential on $ A$;
						\item [(ii)] when $n=2$, $|r_1|=-1$, $|r_2|=0$,
						\[r_1^1\cdot r^{12}_2+r^{21}_2\cdot r^1_1= r_1^2\cdot r^{21}_2+r^{12}_2\cdot r_1^2,  \]
						which shows  that $r_2\in A\otimes A$ is a cycle with respect to the differential $[r_1,-]$;
						\item [(iii)] when $n=3$, 	$|r_1|=-1$, $|r_2|=0$,	$|r_3|=1$,
						\[ r^{12}_2\cdot  r^{23}_2+r^{23}_2\cdot  r^{31}_2+r^{31}_2\cdot  r^{12}_2 =  r_1^{1}\cdot   r_3^{123}+ r_1^{2} \cdot  r_3^{231}+ r_1^{3}\cdot   r_3^{312}+  r_3^{2 31}\cdot   r_1^{1}+  r_3^{ 312} \cdot  r_1^{2}+ r_3^{123}\cdot   r_1^{3},\]
						which shows that $r_2$ satisfies the usual associative Yang-Baxter equation up to homotopy provided by $r_3$.
					\end{itemize}
			\end{exam}
	
	Schedler further proved that there is a one-to-one correspondence between homotopy double Lie algebra structures  and skew-symmetric solutions to associative Yang-Baxter-infinity equation.
		\begin{prop}\label{Prop:Associative Yang-Baxter-infinity}\cite{Sched09}
				Let $V$ be a graded space. There is a bijection between  the set of homotopy double Lie algebra structures on $V$   and    skew-symmetric solutions   of the associative Yang-Baxter-infinity   equation    on $\End(V)$.
				\end{prop}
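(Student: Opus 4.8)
The plan is to realize the asserted bijection through the natural isomorphism $\Theta_n\colon \End(V)^{\ot n}\xrightarrow{\cong}\End(V^{\ot n})$ available because $V$ is (locally) finite dimensional, sending a decomposable tensor $p_1\ot\cdots\ot p_n$ to the diagonal endomorphism $v_1\ot\cdots\ot v_n\mapsto \pm\,p_1(v_1)\ot\cdots\ot p_n(v_n)$, with the Koszul signs produced by moving the graded endomorphisms past the graded vectors. Under $\Theta_n$ a family $\{r_n\in\End(V)^{\ot n}\}_{n\geqslant 1}$ with $|r_n|=n-2$ corresponds bijectively to a family of homogeneous maps $\{\lbb-,\dots,-\rbb_n:=\Theta_n(r_n):V^{\ot n}\to V^{\ot n}\}_{n\geqslant 1}$ of the same degree $n-2$, as required by Definition~\ref{Def:HDLA} and Definition~\ref{Def:HAYBE}. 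Since each $\Theta_n$ is a degree-preserving linear isomorphism, the assignment $r_n\leftrightarrow \lbb-,\dots,-\rbb_n$ is automatically a bijection on the underlying data, so it suffices to check that the two defining axioms on each side correspond under $\Theta$.

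First I would record the key equivariance of $\Theta_n$: permuting the tensor factors of $\End(V)^{\ot n}$ by $\sigma\in\mathfrak{S}_n$ corresponds, under $\Theta_n$, to conjugating the associated endomorphism by the graded permutation action on $V^{\ot n}$, i.e.\ $\Theta_n(\sigma\cdot r)=\sigma\circ\Theta_n(r)\circ\sigma^{-1}$, the Koszul signs on the two sides agreeing. Granting this, the skew-symmetry condition of Definition~\ref{Def:HAYBE}, namely $\sgn(\sigma)\,r_n=r_n^{\sigma(1),\dots,\sigma(n)}=\sigma^{-1}\cdot r_n$, is transported to $\sgn(\sigma)\,\lbb-,\dots,-\rbb_n=\sigma^{-1}\circ\lbb-,\dots,-\rbb_n\circ\sigma$ for all $\sigma$, which after replacing $\sigma$ by $\sigma^{-1}$ is exactly the skew-symmetry of $\lbb-,\dots,-\rbb_n$ in Definition~\ref{Def:HDLA}. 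Thus skew-symmetric solutions correspond precisely to skew-symmetric families of brackets.

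The heart of the argument is matching the associative Yang-Baxter-infinity equation with the double $\mbox{Jacobi}_\infty$ identity. The conceptual point is that $\Theta$ turns the gluing along one tensor slot appearing in the AYBE$_\infty$ product $r_i^{\sigma(1),\dots,\sigma(i)}\,r_j^{\sigma(i),\dots,\sigma(n)}$ into the nesting of two brackets: composing the last factor of $r_i$ with the first factor of $r_j$ in the shared slot $\sigma(i)$ is precisely the composition of endomorphisms in $\End(V)$, which is the associative product of $A=\End(V)$. To align the nesting order with the left-to-right order of the AYBE$_\infty$ product, I would pass to the opposite brackets $\lbb-,\dots,-\rbb^{\mathrm{op}}_k$ and invoke Lemma~\ref{Lemma:def homotopy double Lie2}, which rewrites the double $\mbox{Jacobi}_\infty$ identity~\eqref{DJac infinty} in the right-nested form~\eqref{DJac infinty2}; a direct computation then shows that $\Theta_n$ carries the right-nested composite $\lbb\lbb-,\dots,-\rbb^{\mathrm{op}}_i,-,\dots,-\rbb^{\mathrm{op}}_{R,j}$ to the overlapping product $r_i\cdot r_j$ glued along a single slot, i.e.\ to the untwisted ($\sigma=\id$) AYBE$_\infty$ summand. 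The cyclic sum $\sum_{\sigma\in\mathfrak{C}_{n}}\sgn(\sigma)\,\sigma\circ(-)\circ\sigma^{-1}$ on the bracket side matches the slot-cycling $\sum_{\sigma\in\mathfrak{C}_{n}}\sgn(\sigma)(-)^{\sigma(1),\dots,\sigma(n)}$ on the tensor side via the equivariance of $\Theta$, and the two sign prefactors already coincide, since $(-1)^{(j+1)i}=(-1)^{(j-1)i}$. Hence $\{r_n\}$ solves the AYBE$_\infty$ equation if and only if $\{\lbb-,\dots,-\rbb_n\}$ satisfies the double $\mbox{Jacobi}_\infty$ identity, completing the bijection and recovering the degree-zero case~\eqref{double bracket} of Goncharov-Kolesnikov and Schedler.

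I expect the main obstacle to be the bookkeeping of Koszul signs: one must verify that the signs produced by $\Theta_n$, by the graded permutation action, and by the suspension conventions combine to reproduce the weight $(-1)^{(j+1)i}\sgn(\sigma)$ of each AYBE$_\infty$ term against the weight $(-1)^{(j-1)i}\sgn(\sigma)$ of each double $\mbox{Jacobi}_\infty$ term. A secondary subtlety is the multiplication-order convention on $\End(V)$, since bracket nesting naturally produces the product in the order opposite to the left-to-right AYBE$_\infty$ product; this is exactly what the passage to opposite brackets through Lemma~\ref{Lemma:def homotopy double Lie2} is designed to reconcile. Once these sign and order verifications are in place for the generic nested term, the equivalence of the full equations follows by summing over $i+j=n+1$ and $\sigma\in\mathfrak{C}_{n}$, and bijectivity is immediate from the invertibility of each $\Theta_n$.
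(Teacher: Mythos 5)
The paper gives no proof of this proposition: it is quoted directly from Schedler \cite{Sched09}, so there is nothing internal to compare against. Your outline is the standard argument for this equivalence and is structurally sound: the translation through $\Theta_n\colon \End(V)^{\ot n}\to\End(V^{\ot n})$, the equivariance identity $\Theta_n(\sigma\cdot r)=\sigma\circ\Theta_n(r)\circ\sigma^{-1}$ handling skew-symmetry, the observation that gluing along the shared slot $\sigma(i)$ becomes composition in $\End(V)$ and hence bracket nesting, and the remark that $(-1)^{(j+1)i}=(-1)^{(j-1)i}$ are all correct, and routing the order-of-composition mismatch through Lemma~\ref{Lemma:def homotopy double Lie2} is a sensible way to organize the bookkeeping. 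Two caveats. First, what you have written is a plan rather than a proof: the Koszul-sign verification that you yourself identify as the main obstacle is precisely the content of the proposition, and until the generic nested term is checked against the generic AYBE$_\infty$ term the argument is not complete. Second, the bijection genuinely requires $\Theta_n$ to be an isomorphism, hence $V$ finite-dimensional (as in Proposition~5.11 of the paper); for general graded $V$ one only gets an injection of the solution set into the set of homotopy double Lie structures, so your parenthetical hedge should be made into an explicit hypothesis.
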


Combining Corollary~\ref{Coro: Pre-Calabi-Yau structure  on graded space induced by homotopy RB on endomorphism algebra}, Corollary~\ref{Coro: Correspongdence between some particular pre-Calabi-Yau and homotopy double Poisson} and Proposition~\ref{Prop:Associative Yang-Baxter-infinity}, we have the following equivalence:

\begin{prop}Let $V$ be  a finite dimensional graded space. Then the following data are equivalent:
	\begin{itemize}\label{RBS=Double Lie system}
		\item [\rm (i)] A fine special pre-Calabi-Yau algebra structure on $V$;
		\item [\rm (ii)] A  homotopy double Lie algebra  structure  $ \{\lbb -,\cdots,-\rbb\}_{n\geqslant 1}$ on $V$;
		
		\item [\rm (iii)] A differential $d$ on $V$ and an ultracyclic homotopy relative Rota-Baxter operator on dg algebra $(\mathrm{End}(V),[d,-])$;
		
		\item[\rm (iv)] A skew-symmetric solution to associative Yang-Baxter-infinity  equation in the graded algebra $\mathrm{End}(V)$.
		
	\end{itemize}
	\end{prop}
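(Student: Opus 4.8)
The plan is to prove the four-way equivalence purely by assembling three bijections already established in the paper into a single cycle of identifications; no fresh computation should be needed beyond checking that the auxiliary data — above all the differentials — are transported consistently from one correspondence to the next.

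First I would pin down which cited result supplies each link of the cycle. For (i) $\Leftrightarrow$ (iii) I would invoke the third bijection of Corollary~\ref{Coro: Pre-Calabi-Yau structure  on graded space induced by  homotopy RB on endomorphism algebra} with base algebra $B=V$ and acting algebra $A=\End(V)$: it matches pairs $(d_V,\{T_n\}_{n\geqslant 1})$, where $d_V$ is a differential on $V$ and $\{T_n\}_{n\geqslant 1}$ is an ultracyclic homotopy relative Rota-Baxter operator on the dg algebra $(\End(V),[d_V,-])$, with fine special pre-Calabi-Yau structures on $V$. For (i) $\Leftrightarrow$ (ii) I would use the third bijection of Corollary~\ref{Coro: Correspongdence between some particular pre-Calabi-Yau and homotopy double Poisson} with $A=V$, identifying fine special pre-Calabi-Yau structures with homotopy double Lie structures $\{\lbb-,\cdots,-\rbb_n\}_{n\geqslant 1}$ on $V$. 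For (ii) $\Leftrightarrow$ (iv) I would cite Proposition~\ref{Prop:Associative Yang-Baxter-infinity}, Schedler's bijection between homotopy double Lie structures on $V$ and skew-symmetric solutions of the associative Yang-Baxter-infinity equation in $\End(V)$. Since $V$ is finite-dimensional, all three results apply verbatim, and composing them along (iii) $\Leftrightarrow$ (i) $\Leftrightarrow$ (ii) $\Leftrightarrow$ (iv) yields the desired equivalence.

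The step I expect to demand the most care is the compatibility of the unary (degree $-1$) parts across the three correspondences. In (iii) the differential $d$ lives on $V$; in (iv) the solution carries a component $r_1\in\End(V)$ of degree $-1$ with $r_1\cdot r_1=0$, so $r_1$ is itself a differential on $V$ and induces the commutator differential $[r_1,-]$ making $\End(V)$ a dg algebra; and in (ii) the homotopy double Lie structure contains a unary bracket $\lbb-\rbb_1$ that is a differential on $V$. I would verify that under the composite bijection these all name the same differential on $V$, and that the dg structure $[d,-]$ on $\End(V)$ used in (iii) is exactly the one induced by it. This I would check by tracking the unary component through each construction: Lemma~\ref{Lem: A_infinity structures on homotopy RB dg algebras} fixes $m_1=-d_{\partial_{-1}B}$, tying the pre-Calabi-Yau differential to $d_V$; the assignment~\eqref{Eq:pre-CY and homotopy double Poisson} sends this $m_1$ to $\lbb-\rbb_1=d_B$; and Schedler's correspondence identifies $\lbb-\rbb_1$ with $r_1$. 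Once these identifications are confirmed (up to the signs fixed by the respective definitions), the three bijections glue into a single coherent equivalence, which completes the argument.
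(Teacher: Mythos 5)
Your proposal is correct and coincides exactly with the paper's argument: the proposition is obtained by chaining the third bijection of Corollary~\ref{Coro: Pre-Calabi-Yau structure  on graded space induced by  homotopy RB on endomorphism algebra} (for (i)$\Leftrightarrow$(iii)), the third bijection of Corollary~\ref{Coro: Correspongdence between some particular pre-Calabi-Yau and homotopy double Poisson} (for (i)$\Leftrightarrow$(ii)), and Proposition~\ref{Prop:Associative Yang-Baxter-infinity} (for (ii)$\Leftrightarrow$(iv)). Your extra check that the unary components ($d$, $r_1$, $\lbb-\rbb_1$, $m_1$) are transported consistently is a sensible refinement the paper leaves implicit.
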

		
\medskip

\subsection{Homotopy Poisson structure arsing from homotopy Rota-Baxter algebras}\

It is well-know that the symmetric algebra of an $L_\infty$ carries a homotopy Poisson algebra.  Now we will show that this is also true for homotopy double Poisson algebras.
Let's recall some basics on homotopy Poisson algebras and homotopy double Poisson algebras following  \cite{CF07}.

\begin{defn}\ \label{Definiton: L-infinity algebra and homotopy Poisson algebra}
	\begin{itemize}
		\item[(i)] An {\bf $L_\infty$-algebra} is a graded vector space $L$ equipped with a collection of graded maps $\{l_n: L^{\otimes n} \to L\}_{n \geqslant1}$ of degree $|l_n| = n - 2$, satisfying the following conditions:
		\begin{itemize}
			\item[(1)] {\bf Skew-symmetry:} For all $\sigma \in \mathfrak{S}_n$,  
			\[
			l_n \circ \sigma^{-1} = \sgn(\sigma) \, l_n;
			\]
			
			\item[(2)] {\bf Generalized Jacobi identity:}
			\[
			\sum_{i=1}^n \sum_{\sigma \in \Sh(i, n-i)} \sgn(\sigma) (-1)^{i(n-i)} \,
			l_{n-i+1} \circ (l_i \otimes \id^{\otimes n-i}) \circ \sigma^{-1} = 0.
			\]
		\end{itemize}
		
		\smallskip
		
		\item[(ii)] A {\bf homotopy Poisson algebra} (also called a {\bf derived Poisson algebra}) is a graded vector space $L$ equipped with both an $L_\infty$-algebra structure $\{l_n\}_{n \geqslant1}$ and a graded commutative associative algebra structure, such that the following $\text{Leibniz}_\infty$ rule holds: for all $n \geqslant1$ and $x_1, \dots, x_{n-1}, x_n', x_n'' \in L$,
		\[
		l_n(x_1 \otimes \cdots \otimes x_n' x_n'') =
		l_n(x_1 \otimes \cdots \otimes x_n') \cdot x_n'' +
		(-1)^{|x_n'|(\sum_{i=1}^{n-1} |x_i| + n - 2)} x_n' \cdot l_n(x_1 \otimes \cdots \otimes x_{n-1} \otimes x_n'').
		\]
	\end{itemize}
\end{defn}

\begin{prop}\label{Thm:Derived Poisson manifold}
	Let $(V, \{\lbb-,\cdots,-\rbb_{n}\}_{n\geqslant1})$ be a homotopy double Lie algebra.
	Define a family of operations $\left\lbrace   l _n\right\rbrace _{n\geqslant1}$ on the graded symmetric algebra $S(V)$ as follows:  for all homogeneous elements $u_1^1,\cdots, u_{k_1}^1,\cdots,u_1^n,\cdots,u_{k_n}^n\in V$
	\begin{align*}
		l_n(u_1^1\cdots u_{k_1}^1\otimes\cdots\otimes u_1^n\cdots u_{k_n}^n) :=&(n-1)!\sum\limits_{1\leqslant q_1\leqslant k_1,\cdots 1\leqslant q_n\leqslant k_n}(-1)^{\sum\limits_{s=1}^n\left( \sum\limits_{t=1}^{s-1}(\sum\limits_{j=1}^{q_t-1}|u_j|+\sum\limits_{j=q_t+1}^{k_t}|u_j|)+\sum\limits_{j=1}^{q_s-1}|u_j|\right) |u_{q_s}|+\frac{(n-1)n}{2}}\\
		&\lbb  u_{q_1}^1,\cdots, u_{q_n}^n\rbb_n^{[1]}\cdots\lbb  u_{q_1}^1,\cdots,u_{q_n}^n\rbb^{[n]}_n\cdot u_1^1\cdots \widehat{u_{q_1}^1}\cdots u_{k_1}^1\cdots u_1^n \cdots \widehat{u_{q_n}^n}\cdots u_{k_n}^n.
	\end{align*}
	
	Then $\left( S(V), \left\lbrace   l _n\right\rbrace _{n\geqslant 1}\right) $ defines a  homotopy Poisson algebra. Thus  $V^\vee$ can be regarded as a formal derived Poisson manifold.
\end{prop}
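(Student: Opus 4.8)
The plan is to recognize the family $\{l_n\}_{n\geqslant 1}$ as the unique extension of the double bracket, post-composed with multiplication, to a skew-symmetric graded multiderivation of the commutative algebra $S(V)$. Concretely, for each $n$ I set $\lambda_n\colon V^{\otimes n}\to S(V)$, $\lambda_n(v_1\otimes\cdots\otimes v_n):=\lbb v_1,\cdots,v_n\rbb_n^{[1]}\cdots\lbb v_1,\cdots,v_n\rbb_n^{[n]}$, the product being taken in $S(V)$. The displayed formula for $l_n$ is then precisely the extension of $\lambda_n$ to a degree $(n-2)$ operation that is a graded derivation in each of its $n$ arguments: the sum over the indices $1\leqslant q_s\leqslant k_s$ selects one tensor factor of each monomial input on which the bracket acts, the remaining factors are carried along as spectators and multiplied into the output, while the factor $(n-1)!$ together with the Koszul signs records the passage through the symmetric algebra. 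Well-definedness—insensitivity to reordering the factors $u_1^s\cdots u_{k_s}^s$ within a single slot—is a direct check using the signs already present and the commutativity of $S(V)$. With this identification, two of the three homotopy Poisson axioms are immediate: the $\mathrm{Leibniz}_\infty$ rule of Definition~\ref{Definiton: L-infinity algebra and homotopy Poisson algebra}(ii) holds by construction since $l_n$ is a multiderivation, and the skew-symmetry of $l_n$ follows from that of $\lbb-,\cdots,-\rbb_n$ (Definition~\ref{Def:HDLA}) together with the graded commutativity of $S(V)$, since permuting inputs permutes the output tensor factors of the double bracket, which may be freely reordered inside $S(V)$.

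The heart of the proof is the generalized Jacobi identity of Definition~\ref{Definiton: L-infinity algebra and homotopy Poisson algebra}(i)(2). The key structural observation is that, although an individual composite $l_{n-i+1}\circ(l_i\otimes\id^{\otimes n-i})$ is not a multiderivation, the full alternating Jacobiator
$$J_n:=\sum_{i=1}^{n}\sum_{\sigma\in\Sh(i,n-i)}\sgn(\sigma)(-1)^{i(n-i)}\,l_{n-i+1}\circ(l_i\otimes\id^{\otimes n-i})\circ\sigma^{-1}$$
is again a graded multiderivation of $S(V)$. Conceptually, this is because $J_n$ is the arity-$n$ component of the Schouten--Nijenhuis square of the polyvector field $\sum_{k}l_k$ with itself, and such a square is automatically a polyderivation; the terms that obstruct the derivation property cancel in pairs within the alternating sum. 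Granting this, $J_n=0$ if and only if $J_n$ vanishes on the generators $V^{\otimes n}\subset S(V)^{\otimes n}$, which reduces the problem to a finite, generator-level computation.

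Evaluating $J_n$ on $v_1\otimes\cdots\otimes v_n\in V^{\otimes n}$, the inner operation $l_i$ produces the monomial $\lbb\cdots\rbb_i^{[1]}\cdots\lbb\cdots\rbb_i^{[i]}$ of degree $i$ in $S(V)$, and the outer multiderivation $l_{n-i+1}$ then distributes over this product by the Leibniz rule. Each resulting term selects one tensor component $\lbb\cdots\rbb_i^{[t]}$ to feed into the outer double bracket, while the remaining $i-1$ components are multiplied into the output as spectators; summing over $t$ reconstructs exactly the nested double bracket $\lbb-,\cdots,-,\lbb-,\cdots,-\rbb_i\rbb_{L,j}$—equivalently, after passing to the order-reversing convention of Lemma~\ref{Lemma:def homotopy double Lie2}, its right-nested form—post-composed with the multiplication $S^{n}(V)\to S(V)$. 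The final step is to match the index sets: by the skew-symmetry of the brackets and the commutativity of $S(V)$, the shuffle sum over $\Sh(i,n-i)$ collapses to the cyclic sum over $\mathfrak{C}_n$ appearing in the $\mathrm{Jacobi}_\infty$ identity \eqref{DJac infinty}, with the combinatorial multiplicity absorbed into the factor $(n-1)!$. Consequently $J_n|_{V^{\otimes n}}$ equals the image under multiplication of the double $\mathrm{Jacobi}_\infty$ expression, which vanishes by hypothesis, and hence $J_n=0$.

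I expect the principal obstacle to be precisely this last matching: the conversion between the shuffle-indexed sum governing the $L_\infty$ Jacobi identity and the cyclic-indexed sum governing the double $\mathrm{Jacobi}_\infty$ identity, together with the attendant Koszul-sign bookkeeping as the output factors are multiplied in $S(V)$. A secondary technical point is the justification that $J_n$ is genuinely a multiderivation, which is what legitimizes the reduction to generators; this can be argued either abstractly via the Schouten bracket or by a direct pairwise cancellation of the non-derivation error terms. Once the three axioms are established, the closing assertion that $V^\vee$ is a formal derived Poisson manifold is the standard reinterpretation of a homotopy Poisson structure on the algebra $S(V)$ of polynomial functions on $V^\vee$.
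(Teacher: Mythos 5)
Your proposal is correct and follows essentially the same route as the paper's proof: reduce the $\mathrm{Jacobi}_\infty$ identity to the generating space $V\subset S(V)$ using the multiderivation property, evaluate the Jacobiator there so that the outer bracket distributes over the product produced by the inner one and reconstructs the nested double brackets, and then convert the shuffle-indexed $L_\infty$ sum into the cyclic sum of the double $\mathrm{Jacobi}_\infty$ identity via skew-symmetry and Lemma~\ref{Lemma:def homotopy double Lie2}. The paper carries out the shuffle-to-cyclic conversion by first enlarging to a sum over all of $\mathfrak{S}_n$ (absorbing the $(i-1)!(n-i)!$ multiplicities) and then factoring $\mathfrak{S}_n$ through $\mathfrak{C}_n$, which is exactly the matching you anticipated as the main obstacle.
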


\begin{proof}
	By the skew-symmetry and the $\text{Leibniz}_\infty$ rule satisfied by the homotopy double bracket, it follows that the operators $\{l_n\}_{n \geqslant 1}$ are well-defined on the symmetric algebra $S(V)$. Moreover, it is straightforward to verify that the brackets $\{l_n\}_{n \geqslant 1}$ inherit skew-symmetry and satisfy the $\text{Leibniz}_\infty$ rule with respect to the natural multiplication on $S(V)$. Therefore, it remains only to check that they also satisfy the $\text{Jacobi}_\infty$ rule.
	
	Since each operation $l_n$ satisfies the $\text{Leibniz}_\infty$ rule, it suffices to verify that the family $\{l_n\}_{n \geqslant 1}$ satisfies the $\text{Jacobi}_\infty$ identity on the generating space $V \subset S(V)$. Let $x_1, \dots, x_n \in V$, and let $\mu$ denote the natural multiplication on the symmetric algebra $S(V)$. Then, using the skew-symmetry of the brackets $\{\lbb -, \dots, - \rbb_n\}_{n \geqslant 1}$ and applying Lemma~\ref{Lemma:def homotopy double Lie2}, we obtain:
	{\small	\begin{align}
			& \sum\limits\limits_{i=1}^n\sum\limits\limits_{\sigma\in \Sh(i,n-i)}\chi(\sigma; x_1, \dots, x_n)(-1)^{i(n-i)}l_{n-i+1}(l_i(x_{\sigma(1)}\ot \cdots \ot x_{\sigma(i)})\ot x_{\sigma(i+1)}\ot \cdots \ot x_{\sigma(n)}) \label{Eq: Derived Poisson manifold}\\
			=& \sum\limits\limits_{i=1}^n(i-1)!(n-i)!\sum\limits\limits_{\sigma\in \Sh(i,n-i)}\sum\limits\limits_{k=1}^{i-1} \sgn(\sigma) (-1)^{i(n-i)+\frac{i(i-1)+(n-i)(n-i+1)}{2}}\mu\circ \Big(\id^{k-1}\otimes\lbb-,\cdots,-\rbb^{[1]}_{n-i+1}\otimes\id^{i-k}\nonumber\\
			&\otimes \lbb-,\cdots,-\rbb^{[2]}_{n-i+1} \otimes\cdots\otimes \lbb-,\cdots,-\rbb^{[n-i+1]}_{n-i+1}\Big)\circ \left(\lbb-,\cdots,-\rbb_{i}\otimes \id^{\ot n-i} \right)\circ\sigma^{-1} (x_1\otimes\cdots\otimes x_n)\nonumber\\
			=&\sum\limits_{i=1}^n (-1)^{i(n-i)+\frac{i(i-1)+(n-i)(n-i+1)}{2}}(i-1)!(n-i)! \sum\limits_{\sigma\in \Sh(i,n-i)}\sum\limits_{\tau\in \mathfrak{C}_i\times \id^{ n-i}} \sgn (\sigma)\sgn (\tau) \nonumber\\
			&\mu\circ(\id^{\otimes i-1}\otimes \lbb-,\cdots,-\rbb_{n-i+1})\circ (\lbb-,\cdots,-\rbb_{i}\otimes \id^{n-1}) \circ \tau^{-1}  \circ\sigma^{-1}(x_1\otimes\cdots\otimes x_n).\nonumber
	\end{align}}
	
	Note that, for each $1\leqslant i\leqslant n$, the composite map \[\mu\circ(\id^{\otimes i-1}\otimes \lbb-,\cdots,-\rbb_{n-i+1})\circ (\lbb-,\cdots,-\rbb_{i}\otimes \id^{n-1}) \]  is graded symmetric with respect to the first $i-1$ inputs  and the last $n-i$ inputs. Thus, 
	{\small	\begin{align*}
			(\ref{Eq: Derived Poisson manifold})	=&\sum\limits_{i=1}^n (-1)^{i(n-i)+\frac{i(i-1)+(n-i)(n-i+1)}{2}} \sum\limits_{\sigma\in \mathfrak{S}_n} \sgn (\sigma)\mu\circ(\id^{\otimes i-1}\otimes \lbb-,\cdots,-\rbb_{n-i+1})\circ (\lbb-,\cdots,-\rbb_{i}\otimes \id^{n-1}) \circ \sigma^{-1}\\
			=&\sum\limits_{i=1}^n (-1)^{i(n-i)} \sum\limits_{\sigma\in \mathfrak{S}_n} \sgn (\sigma)\mu\circ(\id^{\otimes i-1}\otimes \lbb-,\cdots,-\rbb_{n-i+1}^{\rm{op}})\circ (\lbb-,\cdots,-\rbb_{i}^{\rm{op}}\otimes \id^{n-1})\circ \sigma^{-1}\\
			=&\sum\limits_{i=1}^n (-1)^{i(n-i)}\sum\limits_{\sigma\in \mathfrak{S}_n} \sgn (\sigma) \mu\circ\left( \lbb\lbb-,\cdots,-\rbb^{\rm{op}}_{i},-,\cdots,-\rbb^{\rm{op}}_{R,n-i+1}\right) \sigma^{-1}(x_1\otimes\cdots\otimes x_n)\\
			=&\sum\limits_{\tau\in \mathfrak{S}_{n-1}\times \id}\mu\circ\left( \sum\limits_{i=1}^n (-1)^{i(n-i)}\sum\limits_{\sigma\in \mathfrak{C}_n} \sgn (\sigma)\sigma\cdot \left( \lbb\lbb-,\cdots,-\rbb^{\rm{op}}_{i},-,\cdots,-\rbb^{\rm{op}}_{R,n-i+1}\right) \sigma^{-1}\right)\cdot\tau^{-1}(x_1\otimes\cdots\otimes x_n) \\
			=&0.
	\end{align*}}

	This completes the proof that the operations $\{l_n\}_{n \geqslant1}$ satisfy the $\text{Jacobi}_\infty$ identity.
\end{proof}

\begin{prop}
	Let $A$ be a finite-dimensional dg algebra, and let $B$ be a locally finite-dimensional dg left $A$-module. Suppose there exists an ultracyclic homotopy relative Rota-Baxter operator $\{T_n: (A^\vee)^{\otimes n} \rightarrow A\}_{n \geqslant 1}$. Then the symmetric algebra $S(B)$ inherits a homotopy Poisson algebra structure. In particular, the graded dual $B^\vee$ can be regarded as a formal derived Poisson manifold.
\end{prop}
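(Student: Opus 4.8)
The plan is to assemble this proposition from results already established in the paper, the only genuine work being to check that the degenerate, trivial-multiplication setting makes every hypothesis fit. In outline: I would first promote the given data to an ultracyclic homotopy Rota-Baxter interactive pair, then invoke Theorem~\ref{Thm:From RB infinity alegbra to double Poisson infinity} to obtain a double $L_\infty$-structure on $B$, and finally feed this into Proposition~\ref{Thm:Derived Poisson manifold} to produce the homotopy Poisson algebra on $S(B)$.

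For the first step, I would realize $(A,B)$ as an interactive pair via Example~\ref{Exm: interactive pair}(2): regard $B$ as a dg algebra with trivial multiplication and $A$ as a $B$-module with trivial $\btr$-action. Then condition (iii) of Definition~\ref{Definition: Interactive Pair} holds vacuously, since both sides vanish. Because $*$ is trivial and the induced right action $\btl$ of $B$ on $A^\vee$ is trivial as well, every $T_n$ satisfies the $n$-derivation relation~\eqref{Relation:Leibniz identity for homotopy RB} automatically, both sides being zero. Hence $(A,B)$ together with the ultracyclic operator $\{T_n\}_{n\geqslant 1}$ is an ultracyclic homotopy Rota-Baxter interactive pair in the sense of Definition~\ref{Def: homotopy Rota-Baxter interactive pair}, and the finite-dimensionality assumptions match exactly those of Theorem~\ref{Thm:From RB infinity alegbra to double Poisson infinity}: $A$ is finite-dimensional and $B$ is locally finite-dimensional.

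Next I would apply Theorem~\ref{Thm:From RB infinity alegbra to double Poisson infinity}(ii): since each $T_n$ is ultracyclic, the family $\{\lbb -,\ldots,-\rbb_n\}_{n\geqslant 1}$ defined in~\eqref{Eq:Homotopy double Poisson construction} equips $B$ with a homotopy double Poisson algebra structure. The key observation is that, because the multiplication on $B$ is trivial, the double $\text{Leibniz}_\infty$ rule carries no content and the structure collapses to a pure double $L_\infty$-algebra; that is, $\big(B,\{\lbb -,\ldots,-\rbb_n\}_{n\geqslant 1}\big)$ is a homotopy double Lie algebra. This is precisely the corollary recorded immediately after Theorem~\ref{Thm:From RB infinity alegbra to double Poisson infinity}, and it is exactly this collapse that licenses the final step.

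Finally I would apply Proposition~\ref{Thm:Derived Poisson manifold} with $V=B$: the explicit operations $\{l_n\}_{n\geqslant 1}$ given there turn the homotopy double Lie algebra $B$ into a homotopy Poisson algebra on the symmetric algebra $S(B)$, so that $B^\vee$ is a formal derived Poisson manifold. I do not expect any serious obstacle: all the analytic content, namely the $\text{Jacobi}_\infty$ verification for $\{l_n\}$ and the pre-Calabi-Yau/double-Poisson correspondence, has already been discharged in Proposition~\ref{Thm:Derived Poisson manifold} and Theorem~\ref{Thm:From RB infinity alegbra to double Poisson infinity}. The single point demanding care is the passage from a homotopy double Poisson structure to a bare double $L_\infty$-structure in the trivial-multiplication case, since it is the latter, and not the former, that is the input to Proposition~\ref{Thm:Derived Poisson manifold}.
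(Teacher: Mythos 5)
Your proposal is correct and follows exactly the route the paper intends: the corollary immediately after Theorem~\ref{Thm:From RB infinity alegbra to double Poisson infinity} already records that the ultracyclic operator endows $B$ with a double $L_\infty$-algebra structure (via the trivial-multiplication interactive pair of Example~\ref{Exm: interactive pair}(2), where the $n$-derivation condition is vacuous), and Proposition~\ref{Thm:Derived Poisson manifold} then yields the homotopy Poisson structure on $S(B)$. Your care about the collapse of the double $\text{Leibniz}_\infty$ rule to a bare double $L_\infty$-structure is exactly the right point to flag, and nothing further is needed.
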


\bigskip

\bigskip

\textbf{Acknowledgements} 
The   authors  were supported by  the National Key R$\&$D Program of China (No. 2024YFA1013803), by Key Laboratory of  Mathematics and Engineering Applications (Ministry of Education), and by Shanghai Key Laboratory of PMMP (No. 22DZ2229014).
  The first author was also financed by the project OZR3762 of Vrije Universiteit Brussel, by the FWO Senior Research Project G004124N. 
 

\textbf{Conflict of Interest}

None of the authors has any conflict of interest in the conceptualization or publication of this
work.

\textbf{Data availability}

Data sharing is not applicable to this article as no new data were created or analyzed in this study.

\bigskip

	\appendix
	
	\section{Proof of Proposition~\ref{Proposition: dual homotopy Rota-Baxter module}}\label{Appendix:  Proof of dual homotopy Rota-Baxter module}
	\begin{proof}
	We just to check that $\{T_{i,j}^{M^\vee}\}_{i,j\geqslant  0}$ satisfies Equation~\eqref{Defn: homotopy Rota-Baxter module}, that is, we need to
	check the following identity: 
	{\small\begin{eqnarray*}
			&&\underbrace{\sum\limits\limits_{{ i_1+\dots+i_p+l=m,\atop j_1+\cdots+j_q+k=n}\atop
					p,q,l,k\geqslant  0 } (-1)^{\alpha}m_{p,q}^{M^\vee}\circ\Big(T_{i_1}\ot \cdots \ot T_{i_p}\ot T^{M^\vee}_{l,k} \ot T_{j_1}\otimes \cdots \ot T_{j_q}\Big)}_{\mathrm{(I)}}\\
			&=&\underbrace{\sum\limits\limits_{{{{i_1+\cdots+i_p+l=m,\atop j_1+\cdots+j_q+k=n,}}\atop i_1,\cdots,i_p,j_1,\cdots,j_q\geqslant  1}\atop p,\ q,\ l,\ k\geqslant 0;} (-1)^{\beta_1}
				T^{M^\vee}_{l,k}\circ\Big(\id_A^{\ot l}\ot m_{p, q}^{M^\vee}\circ( T_{i_1}\ot   \cdots\ot T_{i_p}\ot\id_M \ot T_{j_{1}}\ot \cdots\ot T_{j_q})\ot \id_A^{\ot k}\Big)}_{\mathrm{(II)}}\\
			&&\nonumber+\underbrace{\sum\limits_{{{i_1+\cdots+i_p+l+1=m\atop j_1+\cdots+j_q+k=n} \atop i_1,\cdots,i_p,j_1,\cdots,j_q\geqslant 0 }  \atop v,l,k,p,q\geqslant  0}  (-1)^{\beta_2}
				T_{l,k}^{M^\vee}\circ\Big(\id_A^{\ot l}\ot m^{M^\vee}_{p+1,q}\circ( T_{i_1}\ot \cdots \ot T_{i_v}\ot \id_A\ot T_{i_{v+1}}  \cdots\ot T_{i_p}\ot T^{M^\vee}_{r,t} \ot T_{j_{1}}\ot \cdots\ot T_{j_q})\ot \id_A^{\ot k}\Big)}_{\mathrm{(III)}}\\
			&&\nonumber+\underbrace{\sum\limits_{{{i_1+\cdots+i_p+l+1=m\atop j_1+\cdots+j_q+k=n} \atop i_1,\cdots,i_p,j_1,\cdots,j_q\geqslant 0 }  \atop v,l,k,p,q\geqslant  0}  (-1)^{\beta_3}
				T_{l,k}^{M^\vee}\circ\Big(\id_A^{\ot i}\ot m^{M^\vee}_{p, q+1}\circ( T_{i_1}\ot   \cdots\ot T_{i_p}\ot T^{M^\vee}_{r,t} \ot T_{j_{1}}\ot \cdots  \ot T_{j_v}\ot \id_A\ot T_{j_{v+1}}\ot\cdots  T_{j_q})\ot \id_A^{\ot k}\Big)}_{\mathrm{(IV)}}.
	\end{eqnarray*}}

	\textbf{Term (I):} 
	{\small	\begin  {align*}
		&-\sum\limits\limits_{{ i_1+\dots+i_p+l=m,\atop j_1+\cdots+j_q+k=n}\atop
			p,q,l,k\geqslant  0 } (-1)^{\alpha}m_{p,q}^{M^\vee}\circ\Big(T_{i_1}\ot \cdots \ot T_{i_p}\ot T^{M^\vee}_{l,k} \ot T_{j_1}\otimes \cdots \ot T_{j_q}\Big)(a_1\otimes\cdots\otimes a_m\otimes f\otimes b_1\otimes\cdots\otimes b_n )\\
		= &\sum\limits\limits_{{{{i_1+\cdots+i_p+l=m,\atop j_1+\cdots+j_q+k=n,}}\atop i_1,\cdots,i_p,j_1,\cdots,j_q\geqslant  1}\atop p,\ q,\ l,\ k\geqslant 0;} (-1)^{\beta_1+\theta}
		f\circ	T^M_{l,k}\circ\Big(\id_A^{\ot l}\ot m_{p, q}\circ( T_{i_1}\ot   \cdots\ot T_{i_p}\ot\id_M \ot T_{j_{1}}\ot \cdots\ot T_{j_q})\ot \id_A^{\ot k}\Big)\\
		&\qquad\qquad\qquad(b_1\otimes\cdots\otimes b_n\otimes \id_M\otimes  a_1\otimes\cdots\otimes a_m), 
\end{align*}	}

\textbf{Term (II):}

{\small	\begin{align*}
		&\sum\limits\limits_{{{{i_1+\cdots+i_p+l=m,\atop j_1+\cdots+j_q+k=n,}}\atop i_1,\cdots,i_p,j_1,\cdots,j_q\geqslant  1}\atop p,\ q,\ l,\ k\geqslant 0;} (-1)^{\beta_1}
		T^{M^\vee}_{l,k}\circ\Big(\id_A^{\ot l}\ot m_{p, q}^{M^\vee}\circ( T_{i_1}\ot   \cdots\ot T_{i_p}\ot\id_M \ot T_{j_{1}}\ot \cdots\ot T_{j_q})\ot \id_A^{\ot k}\Big)\\
		&\qquad\quad\qquad\qquad\qquad(a_1\otimes\cdots\otimes a_m\otimes f\otimes b_1\otimes\cdots\otimes b_n )	\\
		=	&-\sum\limits\limits_{{ i_1+\dots+i_p+l=m,\atop j_1+\cdots+j_q+k=n}\atop
			p,q,l,k\geqslant  0 } (-1)^{\alpha+\theta}f\circ m_{p,q}\circ\Big(T_{i_1}\ot \cdots \ot T_{i_p}\ot T^M_{l,k} \ot T_{j_1}\otimes \cdots \ot T_{j_q}\Big)(b_1\otimes\cdots\otimes b_n\otimes \id_M\otimes  a_1\otimes\cdots\\
		&\qquad\qquad\qquad\qquad\cdots\otimes a_m)(b_1\otimes\cdots\otimes b_n\otimes \id_M\otimes  a_1\otimes\cdots\otimes a_m),
\end{align*}}

\textbf{Term (III):}
{\small \begin{align*}
		&\sum\limits_{{{i_1+\cdots+i_p+l+1=m\atop j_1+\cdots+j_q+k=n} \atop i_1,\cdots,i_p,j_1,\cdots,j_q\geqslant 0 }  \atop v,l,k,p,q\geqslant  0}  (-1)^{\beta_2}
		T_{l,k}^{M^\vee}\circ\Big(\id_A^{\ot l}\ot m^{M^\vee}_{p+1,q}\circ\Big( T_{i_1}\ot \cdots \ot T_{i_v}\ot \id_A\ot T_{i_{v+1}}  \cdots\ot T_{i_p}\ot T^{M^\vee}_{r,t} \ot T_{j_{1}}\ot\cdots\\
		& \qquad\qquad\qquad\qquad\qquad \cdots\ot T_{j_q}\Big)\ot \id_A^{\ot k}\Big)(a_1\otimes\cdots\otimes a_m\otimes f\otimes b_1\otimes\cdots\otimes b_n )	\\
		=&\sum\limits_{{{i_1+\cdots+i_p+l+1=m\atop j_1+\cdots+j_q+k=n} \atop i_1,\cdots,i_p,j_1,\cdots,j_q\geqslant 0 }  \atop v,l,k,p,q\geqslant  0}  (-1)^{\beta_3+\theta}
		f\circ	T_{l,k}^M\circ\Big(\id_A^{\ot i}\ot m_{p, q+1}\circ\Big( T_{i_1}\ot   \cdots\ot T_{i_p}\ot T^M_{r,t} \ot T_{j_{1}}\ot \cdots \ot T_{j_v}\ot \id_A\ot T_{j_{v+1}}\ot\cdots \\
		&\nonumber \quad\quad\quad\quad\quad\quad\quad\quad\quad\quad\cdots  T_{j_q}\Big)\ot \id_A^{\ot k}\Big)(b_1\otimes\cdots\otimes b_n\otimes \id_M\otimes  a_1\otimes\cdots\otimes a_m),
\end{align*}}

\textbf{Term (IV):}
{\small \begin{align*}
		&\sum\limits_{{{i_1+\cdots+i_p+l+1=m\atop j_1+\cdots+j_q+k=n} \atop i_1,\cdots,i_p,j_1,\cdots,j_q\geqslant 0 }  \atop v,l,k,p,q\geqslant  0}  (-1)^{\beta_3}
		T_{l,k}^{M^\vee}\circ\Big(\id_A^{\ot i}\ot m^{M^\vee}_{p, q+1}\circ( T_{i_1}\ot   \cdots\ot T_{i_p}\ot T^{M^\vee}_{r,t} \ot T_{j_{1}}\ot\cdots \ot T_{j_v}\ot \id_A\ot T_{j_{v+1}}\ot\cdots \\
		&\quad\quad\quad\quad\quad\quad\quad\quad\quad\quad \cdots  T_{j_q})\ot \id_A^{\ot k}\Big) (a_1\otimes\cdots\otimes a_m\otimes f\otimes b_1\otimes\cdots\otimes b_n )	\\
		=&\sum\limits_{{{i_1+\cdots+i_p+l+1=m\atop j_1+\cdots+j_q+k=n} \atop i_1,\cdots,i_p,j_1,\cdots,j_q\geqslant 0 }  \atop v,l,k,p,q\geqslant  0}  (-1)^{\beta_2+\theta}
		f\circ T_{l,k}^M\circ\Big(\id_A^{\ot l}\ot m_{p+1,q}\circ \Big( T_{i_1}\ot \cdots \ot T_{i_v}\ot \id_A\ot T_{i_{v+1}} \otimes   \cdots\ot T_{i_p}\ot T^M_{r,t} \ot T_{j_{1}}\ot\cdots\\
		&  \quad\quad\quad\quad\quad\quad\quad\quad\quad\quad  \cdots\ot T_{j_q}\Big)\ot \id_A^{\ot k}\Big)(b_1\otimes\cdots\otimes b_n\otimes \id_M\otimes  a_1\otimes\cdots\otimes a_m),
\end{align*}}
where 
\[\theta=(\sum_{s=1}^m|a_s|)(\sum_{s=1}^n|b_s|)+|f|(\sum_{s=1}^m|a_s|+m+n+1)+(m+n+1)(n+1).\]

Taking the sum, one can easily see that 
\[{\bf{(I)+(II)+(III)+(IV)}}=0,\] 
since $\{T_{i,j}^M\}_{i,j\geqslant 0}$ subjects to Equation~\eqref{Defn: homotopy Rota-Baxter module}.
Thus  $\{T_{i,j}^{M^\vee}\}_{i,j\geqslant 0}$ satisfies Equation~\eqref{Defn: homotopy Rota-Baxter module}.
\end{proof}

\bigskip

	\section{Proof of   Lemma~\ref{Lem: A_infinity structures on homotopy RB dg algebras}}\label{Appendix: Proof of Lemma: A_infinity structures on homotopy RB dg algebras}
	
Before proving Lemma~\ref{Lem: A_infinity structures on homotopy RB dg algebras}, we first introduce the following  lemma, which will be used extensively in the proof.
		\begin{lem}\label{Lemma: kappa morphism}
		Let $(A,d_A,\cdot)$ be a dg algebra and $ (B,{\rhd})$   a left  dg $A$-module.	  Then  
		\[ \kappa:B\otimes B^\vee\rightarrow A^\vee\]
   is a dg $A$-bimodule morphism. 
   
   Moreover, if  $(A, B)$ is an interactive pair, then $\kappa$ is also a right dg $B$-module morphism.
	\end{lem}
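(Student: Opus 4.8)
The plan is to establish the three defining conditions of a dg $A$-bimodule morphism separately—compatibility with differentials, left $A$-linearity, and right $A$-linearity—and then, under the interactive-pair hypothesis, the right $B$-linearity. Throughout I would make the relevant bimodule structures explicit: $A^\vee$ is the dg $A$-bimodule dual to $A$, with left and right actions determined by $(a\rhd\phi)(x)=\pm\phi(x\cdot a)$ and $(\phi\lhd a)(x)=\pm\phi(a\cdot x)$; whereas $B\otimes B^\vee$ carries its left $A$-action through the $B$-factor (via $\rhd$) and its right $A$-action through the $B^\vee$-factor (the right action on $B^\vee$ dual to $\rhd$), so that $a\rhd(b\otimes f)=(a\rhd b)\otimes f$ and $(b\otimes f)\lhd a=b\otimes(f\lhd a)$. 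Since $\kappa$ has degree $0$, each check amounts to evaluating both sides on a homogeneous generator $b\otimes f$ at a test element $a\in A$ and comparing.

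For the chain-map property I would expand $\kappa\circ d_{B\otimes B^\vee}$ and $d_{A^\vee}\circ\kappa$ on $b\otimes f$ and evaluate at $a$. The only nonformal ingredient is the dg-module Leibniz rule $d_B(a\rhd b)=d_A(a)\rhd b+(-1)^{|a|}\,a\rhd d_B(b)$, which pairs the two summands of $d_{B\otimes B^\vee}(b\otimes f)$ against the single term produced by $d_{A^\vee}$ acting through precomposition with $d_A$; the residual discrepancy is a Koszul sign that cancels once the definition $\kappa(b\otimes f)(a)=(-1)^{|b|(|f|+|a|)}f(a\rhd b)$ is unwound. For left $A$-linearity I would compare $\kappa(a\rhd(b\otimes f))=\kappa((a\rhd b)\otimes f)$ with $a\rhd\kappa(b\otimes f)$, the essential algebraic identity being associativity of the action read as $x\rhd(a\rhd b)=(x\cdot a)\rhd b$; right $A$-linearity is entirely parallel, using instead $a\rhd(x\rhd b)=(a\cdot x)\rhd b$ together with the definition of the right $A$-action on $B^\vee$. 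In both cases the signs on the two sides collapse to the common factor $(-1)^{(|a|+|b|)(|f|+|x|)}$, so the identities hold on the nose.

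Finally, for the interactive-pair assertion, I would equip $A^\vee$ with the right $B$-action $\btl$ dual to the left $B$-action $\btr$ on $A$, namely $(\phi\btl b)(a)=\pm\phi(b\btr a)$, and $B\otimes B^\vee$ with the right $B$-action on its $B^\vee$-factor. Evaluating $\kappa\big((b'\otimes f)\btl b\big)$ at $a$ yields, after unwinding, the scalar $f\big(b*(a\rhd b')\big)$, while $\kappa(b'\otimes f)\btl b$ evaluated at $a$ yields $f\big((b\btr a)\rhd b'\big)$; these two expressions agree precisely by the compatibility condition $(b\btr a)\rhd b' = b*(a\rhd b')$ of Definition~\ref{Definition: Interactive Pair}, with matching Koszul prefactors $(-1)^{|b'|(|f|+|b|+|a|)}$ on each side. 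This last equality is the conceptual core of the statement, since it is exactly the interactive-pair axiom that makes $\kappa$ respect the right $B$-structure. I expect the only genuine obstacle across the whole argument to be the consistent bookkeeping of the Koszul signs in the four verifications, rather than any substantive difficulty, and the full computation is routine enough that it can be carried out directly.
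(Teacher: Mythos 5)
Your proposal is correct and follows essentially the same route as the paper: a direct verification on homogeneous generators evaluated at a test element, using the dg-module Leibniz rule and associativity of the action for the $A$-bimodule part, and invoking the interactive-pair axiom $(b\btr a)\rhd b' = b*(a\rhd b')$ for the right $B$-linearity, exactly as in the paper's computation. The paper likewise writes out only the left $A$-action and the right $B$-action cases in full and dismisses the remaining checks as analogous, so your level of detail matches its proof.
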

	
	\begin{proof}
		For any $b\in B$, $f\in B^\vee$ and $a_1,a_2\in A$,
		\begin{eqnarray*}\kappa\left( (a_2{\rhd}b)\otimes f\right) (a_1)&=&(-1)^{(|a_2|+|b|(|f|+|a_1|)}f\left( (a_1\cdot a_2){\rhd}b\right)\\&=&(-1)^{|a_2|(|b|+|f|+|a_1|)}\kappa(b\otimes f)(a_1\cdot a_2)\\&=&\left(a_2\rhd\kappa(b\ot f) \right)(a_1).  
		\end{eqnarray*}
		Similarly, we also have \[\kappa(b\otimes (f{\lhd}a_2))= \kappa(b\otimes f)\lhd a_2, d_{A^\vee}\left(\kappa(b\otimes f)\right)= \kappa\left( d_B(b)\otimes f \right)+(-1)^{|b|}\kappa\left(b\otimes d_{B^\vee}(f) \right).\]
		Thus, $\kappa$ is a dg $A$-bimodule morphism.
	
		Now, we assume that $(A, B)$ is an interactive pair. For any $b_1,b_2\in B$, $f\in B^\vee$ and $a\in A$,
			\begin{eqnarray*}\kappa\left(b_1\otimes f\btl b_2\right) (a)&=&(-1)^{|b_1|(|f|+|b_2|+|a|}(f \btl b_2)\left( a{\rhd}b_1\right)\\&=&(-1)^{|b_1|(|f|+|b_2|+|a|}f\left(b_2*( a{\rhd}b_1)\right)\\
				&=&(-1)^{|b_1|(|f|+|b_2|+|a|}f\left((b_2\btr a{\rhd})b_1\right)\\&=&\left(\kappa(b_1\otimes f)\btl b_2\right) (a),  
		\end{eqnarray*}
		where ``$*$" stands for the multiplication on $B$ and ``$\btl$" stands for the induced right action of $B$ on $A^\vee$.
	Thus,  $\kappa$ is also a right dg $B$-module morphism.
	\end{proof}

	\begin{proof}[Proof of Lemma~\ref{Lem: A_infinity structures on homotopy RB dg algebras}]  We proceed to verify that the Stasheff identities for the operations $\{m_n\}_{n\geqslant  1} $, introduced in Lemma~\ref{Lem: A_infinity structures on homotopy RB dg algebras}, hold trivially in every case. We divide it into the following five cases.

			\textbf{Case I}: for $b_1,\dots,b_{n+1}\in B$ and $f_1,\dots,f_n\in B^{\vee} $, by Lemma~\ref{Lemma: kappa morphism} , we have
{\small{
		\begin{align*}
			&\sum_{    i+j +k= 2n+1,\atop i,k\geqslant 0,j\geqslant 1}(-1)^{i+jk}m_{i+k+1}(\Id^{\otimes i}\otimes m_j\otimes \Id^{\otimes k})(b_1\otimes s^{-1}f_1\otimes \cdots\otimes s^{-1}f_n\otimes b_{n+1})\\
		=&\sum_{    i+j = n,\atop {s+k=2i,
				\atop i, j ,k\geqslant  0}  }  m_{2i+1}\circ\Big(\id^{\ot s}\ot m_{2j+1}\ot \id^{\ot k}\Big)(b_1\otimes s^{-1}f_1\otimes \cdots\otimes s^{-1}f_n\otimes b_{n+1})\\
			 =&\sum_{    i+j = n,\atop  
				{i, j  \geqslant  1,\atop i-1\geqslant p\geqslant 0} } (-1)^{p+\sum_{k=1}^p(|b_k|+|f_k|)}m_{2i+1}\Big(b_1\otimes s^{-1}f_1\otimes \cdots\otimes b_p\otimes s^{-1}f_p\otimes m_{2j+1}\big(b_{p+1}\otimes s^{-1}f_{s+j}\otimes b_{s+j+1}\big)\\
				&\quad \quad\quad \quad\otimes s^{-1}f_{p+j+1}\otimes \cdots\otimes b_{n+1} \Big) \\
			&+\sum_{     i+j = n,\atop  
				{i, j  \geqslant  1,\atop i-1\geqslant p\geqslant 0} } (-1)^{p+\sum\limits_{k=1}^p(|b_k|+|f_k|)+|b_{p+1}|}m_{2i+1}\Big(b_1\otimes s^{-1}f_1\otimes \cdots\otimes s^{-1}f_{p }\otimes b_{p+1}\otimes m_{2j+1}\big(  s^{-1}f_{p+1}\otimes \cdots\otimes s^{-1}f_{p+j+1}\big) \\
				&\ \quad\qquad \quad \otimes b_{p+j+2}\otimes \cdots\otimes b_{n+1}\Big)\\
			&+\sum_{    i+j = n,\atop  
				i, j  \geqslant  1  }(-1)^{i+\sum\limits_{k=1}^i(|b_k|+|f_k|)}m_{2i+1}\Big(b_1\otimes s^{-1}f_1\otimes \cdots\otimes b_i\otimes s^{-1}f_i\otimes m_{2j+1}(b_{i+1}\otimes s^{-1}f_{i+1}\otimes\cdots\otimes s^{-1}f_{n-1}\otimes b_{n+1})  \Big) \\
			&+\sum_{      
				0\leqslant p\leqslant n-1  } (-1)^{p+\sum\limits_{k=1}^p(|b_k|+|f_k|)+1}m_{2n+1}\Big(b_1\otimes s^{-1}f_1\otimes \cdots\otimes b_p\otimes  s^{-1}f_p\otimes d_B(b_{p+1})\otimes s^{-1}f_{p +1}\otimes \cdots\otimes b_{n+1}\Big) \\
			&+\sum_{     
				0\leqslant p\leqslant n-1} (-1)^{p+\sum\limits_{k=1}^p(|b_k|+|f_k|)+|b_{p+1}|}m_{2n+1}\Big(b_1\otimes s^{-1}f_1\otimes \cdots\otimes s^{-1}f_{p }\otimes b_{p+1}\otimes  s^{-1} d_{B^{\vee} }(  f_{p+1} )\otimes \cdots\otimes b_{n+1}\Big)\\
			&-d_B m_{2n+1}\Big(b_1\otimes s^{-1}f_1\otimes \cdots\otimes s^{-1}f_n\otimes b_{n+1}\Big)+(-1)^{n-1+\sum\limits_{k=1}^n(|b_k|+|f_k|)} m_{2n+1}\Big(b_1\otimes s^{-1}f_1\otimes\cdots \otimes s^{-1}f_n\otimes d_B(b_{n+1})\Big)\\
			=&\sum_{    i+j = n;\atop  
				{ i, j \geqslant 1;\atop  i-1\geqslant p\geqslant 0  }} (-1)^{\gamma_1}m_{2i+1}\Big(b_1\otimes s^{-1}f_1\otimes  \cdots\otimes b_p\otimes s^{-1}f_p\otimes T_{j}\Big(\kappa(b_{p+1}\otimes f_{p+1})\otimes\cdots\otimes\kappa(b_{p+j}\otimes f_{p+j})\Big) {\rhd}b_{p+j+1}\\
				& \qquad \qquad \otimes s^{-1}f_{p+j+1} \otimes \cdots\otimes b_{n+1}\Big)\\
			&+\sum_{      i+j = n;\atop  
					{ i, j \geqslant 1;\atop  i-1\geqslant p\geqslant 0  }} (-1)^{\gamma_2} m_{2i+1}\Big(b_1\otimes s^{-1}f_1\otimes \cdots\otimes  s^{-1}f_p\otimes b_{p+1}\otimes s^{-1}f_{p+1} {\lhd}T_{j}\Big(\kappa(b_{p+2}\otimes f_{p+2})\otimes  \cdots\otimes \kappa(b_{p+j+1}\otimes f_{p+j+1})\Big) \\
				&\qquad \qquad  \otimes s^{-1}f_{p+j+2} \otimes \cdots\otimes b_{n+1}\Big)\\
		&+	\sum_{    i+j = n,\atop  
				i, j  \geqslant  1  }(-1)^{\gamma_3} m_{2i+1}\left(b_1\otimes s^{-1}f_1\otimes\cdots\otimes b_i\otimes s^{-1}f_i\otimes T_{j}(\kappa(b_{i+1}\otimes f_{i+1})\otimes \cdots\otimes \kappa(b_{n}\otimes f_{n})){\rhd}b_{n+1}\right)\\
	&	+	\sum_{      
				0\leqslant p\leqslant n-1  } (-1)^{n-1+\sum\limits_{k=1}^p(|b_k|+|f_k|)  +\gamma}  T_n\left(\kappa(b_1\otimes f_1)\otimes\cdots\otimes\kappa(d_B(b_{p+1})\otimes f_{p+1})\otimes\cdots \otimes\kappa(b_n,f_n) \right) {\rhd}b_{n+1}\\
	&	+	\sum_{      
				0\leqslant p\leqslant n-1  } (-1)^{n-1+\sum\limits_{k=1}^p(|b_k|+|f_k|)  +\gamma+|b_{p+1}|} T_n\Big(\kappa(b_1\otimes f_1)\otimes \cdots\otimes \kappa(b_{p+1}\otimes d_{B^{\vee} }(f_{p+1}))\otimes \cdots\otimes \kappa(b_n\otimes f_n) \Big) {\rhd}b_{n+1}\\
			&- 
			(-1)^{\gamma}d_B\Big( T_n\left(\kappa(b_1\otimes f_1)\otimes\cdots\otimes \kappa(b_n\otimes f_n) \right){\rhd}b_{n+1} \Big)	\\
	&+(-1)^{n-1+\sum\limits_{k=1}^n(|b_k|+|f_k|)+\gamma} T_n\Big(\kappa(b_1\otimes f_1)\otimes\cdots\otimes \kappa(b_n\otimes f_n) \Big){\rhd}d_B(b_{n+1}) \\
		=	&\sum_{     i+j = n;\atop  
				{ i, j \geqslant 1;\atop  i-1\geqslant p\geqslant 0  }}   (-1)^{\gamma_4} T_i\Big(\kappa(b_1\otimes f_1)\otimes\cdots\otimes\kappa(b_s\otimes  f_s)\otimes m^l\Big( T_{j}\Big(\kappa(b_{s+1}\otimes f_{s+1})\otimes\cdots \otimes \kappa (b_{s+j}\otimes f_{s+j})\Big)\otimes\kappa (b_{s+j+1}\otimes f_{s+j+1})\Big)\\
				&\qquad\qquad \otimes\cdots\otimes\kappa(b_n\otimes f_n)\Big){\rhd}b_{n+1} \\
	&	+	\sum_{    i+j = n,\atop  
				{i, j  \geqslant  1\atop {i-1 \geqslant s\geqslant 0  }}} (-1)^{\gamma_5}T_i\Big(\kappa(b_1\otimes f_1)\otimes\cdots\otimes\kappa(b_s\otimes  f_s)\otimes m^r \Big(\kappa (b_{s+1}\otimes f_{s+1})\otimes   T_{j}\Big(\kappa(b_{s+2}\otimes f_{s+2})\otimes\cdots\\
			& \quad\qquad\qquad\qquad\cdots\otimes \kappa(b_{s+j+1},f_{s+j+1})\Big) \Big)  \otimes\cdots \otimes\kappa(b_n\otimes f_n)\Big){\rhd}b_{n+1}\\
			&-\sum_{    i+j = n,\atop  
				i, j  \geqslant  1  }(-1)^{\gamma_6}m \Big(T_i\Big(\kappa(b_1\otimes f_1)\otimes\cdots\otimes\kappa(b_i\otimes  f_i)\Big)\otimes    T_{j}\Big(\kappa(b_{i+1}\otimes  f_{i+1})\otimes \cdots\otimes\kappa(b_{n}\otimes f_{n})\Big)\Big)   {\rhd}b_{n+1} \\
	&-	(-1)^{\gamma}d_A\Big(T_n\Big(\kappa(b_1\otimes f_1)\otimes\cdots\otimes \kappa(b_n\otimes f_n) \Big) \Big) {\rhd}b_{n+1} 	\\
		&+(-1)^{n-1+\sum\limits_{k=1}^s(|b_k|+|f_k|)+\gamma}T_n\Big(\kappa(b_1\otimes f_1)\otimes\cdots\otimes d_{A^{\vee} }(\kappa(b_s\otimes f_s))\otimes\cdots\otimes\kappa(b_n\otimes f_n) \Big){\rhd}b_{n+1} \\
		= 	& (-1)^{\gamma} \Bigg(\Big( \sum_{s+k+j+1=n} (-1)^{s+(j-1)(i-s)} T_{i}\circ\left( \id^{\ot s} \ot m^l\circ(T_{j}\otimes \id)\ot\id^{\ot k}\right)\\
		&\qquad+\sum_{s+k+j+1=n} (-1)^{s+(j-1)(i-s)}(-1)^{1-j} T_{i}\circ\big( \id^{\ot s} \ot m^r\circ( \id\otimes T_{j})\ot\id^{\ot k}\big) -\sum_{i+j=n}(-1)^{1+i} m\circ\big(T_{i}\ot  T_{j}\big) \\
			&\qquad + \sum_{s+k+1=n} (-1)^{n-1}T_n\circ\big( \id^{\ot s}\otimes  d_{A^{\vee} }\otimes\id^{\ot k}\big)-d_A\circ T_{n}\Big)\\
			&    \qquad \Big(\kappa(b_1\otimes f_1)\otimes\cdots\otimes \kappa(b_n\otimes f_n) \Big) \Bigg) {\rhd}b_{n+1}\\
			=&0,
		\end{align*}
	}}
where 
{\small
\begin{align*}
	\gamma =& \sum\limits_{k=1}^n (n - k + 1)|b_k| + \sum_{k=1}^n (n - k)|f_k|;\\
	 \gamma_1=&p+\sum\limits_{k=1}^p(|b_k|+|f_k|)+\sum\limits_{k=p+1}^{p+j}(p+j-k+1)|b_k|+\sum\limits_{k=p+1}^{p+j}(p+j-k)|f_k|;\\
	 \gamma_2=&p+\sum_{k=1}^{p+1}(|b_k|+|f_k|)+\sum_{k=p+2}^{p+j+1}(p+j-k+1)|b_k|+\sum_{k=p+1}^{p+j}(p+j-k)|f_k|;\\ 
	 \gamma_3=&i+\sum\limits_{k=1}^i(|b_k|+|f_k|)+\sum\limits_{k=i+1}^n(n-k+1)|b_k|+\sum\limits_{k=i+1}^n(n-k)|f_k|;\\
	 \gamma_4=&p+(j-1)(i-p)+(j-1)(\sum_{k=1}^p(|b_k|+|f_k|))+\sum_{k= 1}^{p+j}(n-k+1)|b_k|+\sum_{k= 1}^{p+j}(n-k)|f_k|;\\
	 \gamma_5=&s+(j-1)(i-s-1)+(j-1)(\sum_{k=1}^s(|b_k|+|f_k|))+\sum_{k= 1}^{s+j}(n-k+1)|b_k|+\sum_{k= 1}^{s+j}(n-k)|f_k|;\\
	 \gamma_6=&i+1+(j-1)(\sum_{k=1}^i(|b_k|+|f_k|))+\sum_{k=i+1}^n(n-k+1)|b_k|+\sum_{k=i+1}^n(n-k)|f_k|.
\end{align*}}

\smallskip

\textbf{Case II}: for brevity, we omit the detailed calculations, which are analogous to those in Case I.	For $b_1,\dots,b_{n+1}\in B$ and $f_0,\dots,f_n\in B^{\vee} $,
	{\small	\begin{eqnarray*}	 
			&&	\sum_{    i+j = n,\atop s+k=2i,
				i, j ,k\geqslant  0  }  m_{2i+1}\circ\Big(\id^{\ot s}\ot m_{2j+1}\ot \id^{\ot k}\Big)(s^{-1}f_0\otimes b_1\otimes s^{-1}f_1\otimes\cdots\otimes b_n\otimes s^{-1}f_n ) \\
			&&=  (-1)^{\gamma} s^{-1}f_0{\lhd}\Bigg( \Big(- \sum_{s+k+j+1=n} (-1)^{s+(j-1)(i-s)}  T_{i}\circ\left( \id^{\ot s} \ot m_l\circ(T_{j}\otimes \id)\ot\id^{\ot k}\right)    \\
			&& -  \sum_{s+k+j+1=n} (-1)^{s+(j-1)(i-s-1)} T_{i}\circ\left( \id^{\ot s} \ot m_r\circ( \id\otimes T_{j})\ot\id^{\ot k}\right)+\sum_{i+j=n}(-1)^{1+i} m\circ\Big(T_{i}\ot  T_{j}\Big)  \\
			&&   -\sum_{s+k+1=n} (-1)^{n-1}T_n\circ\left( \id^{\ot s}\otimes  d_{A^{\vee} }\otimes\id^{\ot k}\right)+d_A\circ T_{n}\Big) \Big( \kappa(b_1\otimes f_1)\otimes \cdots\otimes  \kappa(b_n\otimes f_n)\Big) \Bigg)  \\
			&&=0.
		\end{eqnarray*}}
	Since $(A,B)$ is an interactive pair, by Lemma~\ref{Lemma: kappa morphism},  then     
	\begin{align}\label{Eq: Left interactive pair on Ainfinty alg}
		 \kappa(b_1\otimes f){\btl} b_2=\kappa\big(b_1\otimes sm_2(s^{-1}f\otimes b_2)\big),~\forall b_1,b_2\in B, ~f\in B^\vee.
	\end{align}
 Next,	we will use Equation~(\ref{Eq: Left interactive pair on Ainfinty alg}) to  verify three cases where the Stasheff identity holds with a nontrivial $m_2$ involved.
	
	\smallskip
	
	\textbf{Case III}: for  $n\geqslant1$, $b_1,\dots,b_{n+2}\in B$  and $f_1,\dots,f_{n}\in B^\vee$, 
	\begin{align*}	
		&\sum\limits_{    i+j+k= 2n+2,\atop
			i, k\geqslant  0, j\geqslant  1 } (-1)^{i+jk}m_{i+1+k}\circ\Big(\id^{\ot i}\ot m_j\ot \id^{\ot k}\Big)\Big( b_1\otimes s^{-1}f_1\otimes\dots\otimes s^{-1} f_{n}\otimes b_{n+1}\otimes   b_{n+2}\Big)\\
		=&\Big( m_{2n+1}\circ (\id^{\otimes 2n}\otimes m_2)-m_{2n+1}\circ (\id^{\otimes 2n-1}\otimes m_2\otimes \id)-m_2(m_{2n+1}\otimes \id)\Big)\\
		&\qquad\Big( b_1\otimes s^{-1}f_1\otimes\dots\otimes s^{-1} f_{n}\otimes b_{n+1}\otimes   b_{n+2}\Big) \\
		=&  (-1)^{\gamma}T_n\Big(\kappa(b_1\otimes f_1)\otimes \cdots\otimes  \kappa(b_n\otimes f_n) \Big){\rhd}m_2(b_{n+1}\otimes b_{n+2}) \\
		&-(-1)^{\gamma}T_n\Big(\kappa(b_1\otimes f_1)\otimes \cdots\otimes    \kappa(b_n\otimes sm_2(s^{-1}f_n\otimes  b_{n+1}) )    \Big){\rhd}b_{n+2} \\
		&-(-1)^{\gamma}m_2\Big(   T_n\big(\kappa(b_1\otimes f_1)\otimes \cdots\otimes  \kappa(b_n\otimes f_n) \big){\rhd} b_{n+1} \otimes b_{n+2}\Big)  \\
		=&  (-1)^{\gamma}T_n\Big(\kappa(b_1\otimes f_1)\otimes \cdots\otimes  \kappa(b_n\otimes f_n) \Big){\rhd}(b_{n+1}*b_{n+2}) \\
		&-(-1)^{\gamma}T_n\Big(\kappa(b_1\otimes f_1)\otimes \cdots\otimes    \kappa(b_n\otimes f_n\btl b_{n+1} )    \Big){\rhd}b_{n+2} \\
		&-(-1)^{\gamma}   \Big( T_n\Big(\kappa(b_1\otimes f_1)\otimes \cdots\otimes  \kappa(b_n\otimes f_n) \Big){\rhd} b_{n+1}\Big)* b_{n+2} \\
		=&  (-1)^{\gamma}T_n\Big(\kappa(b_1\otimes f_1)\otimes \cdots\otimes  \kappa(b_n\otimes f_n) \Big){\rhd}(b_{n+1}* b_{n+2}) \\
		&	-(-1)^{\gamma}T_n\Big(\kappa(b_1\otimes f_1)\otimes \cdots\otimes    \kappa(b_n\otimes  f_n    ) {\btl}  b_{n+1}  \Big){\rhd}b_{n+2} \\
		&-(-1)^{\gamma}   \Big( T_n\Big(\kappa(b_1\otimes f_1)\otimes \cdots\otimes  \kappa(b_n\otimes f_n) \Big){\rhd} b_{n+1}\Big)* b_{n+2} .
	\end{align*}
	Thus, the Stasheff identity holding for the element $a_1\otimes s^{-1}f_1\otimes\dots\otimes s^{-1} f_{n}\otimes b_{n+1}\otimes   b_{n+2} $  is equivalent to that $ T_n$  is an $n$-derivation relative to $B$.  
	
	\smallskip
	
		\textbf{Case IV}: for   $b_0,\dots,b_{n+1}\in B$  and $f_1,\dots, f_{n} \in B^\vee$,
	\begin{eqnarray*}	
		&&\sum\limits_{    i+j+k= 2n+2,\atop
			i, k\geqslant  0, j\geqslant  1 } (-1)^{i+jk}m_{i+1+k}\circ\Big(\id^{\ot i}\ot m_j\ot \id^{\ot k}\Big)( b_0\otimes b_1\otimes  s^{-1}f_1\otimes b_2\otimes s^{-1}f_2 \dots b_n\otimes s^{-1} f_{n}\otimes    b_{n+1})\\
		=&&\Big( -m_{2n+1}\big(\id\otimes m_2\otimes \id^{\otimes 2n-1}\big)+m_{2n+1}\big(m_2\otimes \id^{\otimes 2n}\big)-m_2\big(\id\otimes m_{2n+1}\big)\Big)\\
		&&( b_0\otimes b_1\otimes  s^{-1}f_1\otimes b_2\otimes s^{-1}f_2 \otimes\cdots b_n\otimes s^{-1} f_{n}\otimes    b_{n+1}) \\
		=&&-(-1)^{\gamma+n|b_0|}T_n\Big(\kappa(b_0\otimes b_1\lhd s^{-1}f_1)\otimes \kappa( b_2\otimes s^{-1}f_2)\otimes\cdots\otimes\kappa( b_n\otimes s^{-1}f_n)\Big)\rhd b_{n+1}\\
		&&+(-1)^{\gamma+n|b_0|}T_n\Big(\kappa(b_0*b_1\otimes s^{-1}f_1)\otimes \kappa( b_2\otimes s^{-1}f_2)\otimes\cdots\otimes\kappa( b_n\otimes s^{-1}f_n)\Big)\rhd b_{n+1}\\
		&&-(-1)^{\gamma+|b_0|}\Big( b_0\btr T_n\Big(\kappa(b_1\otimes s^{-1}f_1)\otimes \kappa( b_2\otimes s^{-1}f_2)\otimes\cdots\otimes\kappa( b_n\otimes s^{-1}f_n)\Big)\Big) \rhd b_{n+1}
	\end{eqnarray*}
	Thus, the Stasheff identity holding for the element $b_0\otimes b_1\otimes  s^{-1}f_1\otimes b_2\otimes s^{-1}f_2 \otimes\cdots b_n\otimes s^{-1} f_{n}\otimes    b_{n+1} $  is equivalent to that $ T_n$  satisfies Equation~(\ref{Eq:Cyclic Leibniz identity for homotopy RB at the first componet}) .

\smallskip
	
	\textbf{Case V}: for $1<   l\leqslant n$, $b_1,\dots,b_{n+2}\in B$  and $f_1,\dots,f_{n}\in B^\vee$,
	\begin{align*}
		&\sum\limits_{    i+j+k= 2n+2,\atop
			i, k\geqslant  0, j\geqslant  1 }(-1)^{i+j k} m_{i+1+k} \circ\Big(\mathrm{Id}^{\otimes i} \otimes m_j \otimes \mathrm{Id}^{\otimes k}\Big)\Big(b_1 \otimes s^{-1} f_1 \otimes \cdots\otimes s^{-1} f_{l-1} \otimes b_l \otimes b_{l+1} \otimes s^{-1} f_l \otimes \cdots \otimes s^{-1} f_n \otimes b_{n+2}\Big)\\
		=&\Big( -m_{2n+1}\big(\id^{\otimes 2l-3}\otimes m_2\otimes\id^{\otimes 2(n-l)+2}\big)+ m_{2n+1}\big(\id^{\otimes 2l-2}\otimes m_2\otimes\id^{\otimes 2(n-l)+1}\big)-m_{2n+1}\big(\id^{\otimes 2l-1}\otimes m_2\otimes\id^{\otimes 2(n-l)}\big)\Big)\\
		&\Big(b_1 \otimes s^{-1} f_1 \otimes \cdots \otimes b_l \otimes b_{l+1} \otimes s^{-1} f_l \otimes \cdots \otimes s^{-1} f_n \otimes b_{n+2}\Big)\\
			=&-m_{2n+1}\Big(b_1\otimes \cdots\otimes m_2(s^{-1}f_{l-1}\otimes b_l)\otimes b_{l+1}
		\otimes \cdots\otimes s^{-1}f_n\otimes b_{n+2}\Big)\\
		&+m_{2n+1}\Big(b_1\otimes \cdots\otimes s^{-1}f_{l-1}\otimes m_2(b_l\otimes b_{l+1})\otimes 
		s^{-1}f_{l}\otimes \cdots\otimes s^{-1}f_n\otimes b_{n+2}\Big)\\
		&-m_{2n+1}\Big(b_1\otimes \cdots\otimes b_l\otimes m_2(b_{l+1}\otimes s^{-1}f_{l})\otimes b_{l+2}
		\cdots\otimes s^{-1}f_n\otimes b_{n+2}\Big)\\
		=&-(-1)^{\gamma+\sum\limits_{k=l+1}^{n+1}|b_k|}T_n\Big(\kappa(b_1\otimes f_1)\otimes\cdots\otimes\kappa(b_{l-1}\otimes f_{l-1})\btl b_l\otimes\kappa (b_{l+1}\otimes f_{l})\otimes\cdots\otimes\kappa (b_{n+1}\otimes f_n) \Big)\rhd b_{n+2}\\
		&+(-1)^{\gamma+\sum\limits_{k=l+1}^{n+1}|b_k|}T_n\Big(\kappa(b_1\otimes f_1)\otimes\cdots\otimes\kappa(b_{l-1}\otimes f_{l-1})\otimes\kappa (b_l*b_{l+1}\otimes f_{l})\otimes\cdots\otimes\kappa (b_{n+1}\otimes f_n) \Big)\rhd b_{n+2}	 \\
		&-(-1)^{\gamma+\sum\limits_{k=l+1}^{n+1}|b_k|}T_n\Big(\kappa(b_1\otimes f_1)\otimes\cdots\otimes\kappa(b_{l-1}\otimes f_{l-1})\otimes\kappa (b_l \otimes b_{l+1}\btr f_{l})\otimes\cdots\otimes\kappa (b_{n+1}\otimes f_n) \Big)\rhd b_{n+2}.
	\end{align*}	
	So, we can see that for each $1<  l\leqslant n$ the Stasheff identity holding for the element $b_1 \otimes s^{-1} f_1 \otimes \cdots \otimes s^{-1} f_{l-1}\otimes b_l \otimes b_{l+1} \otimes s^{-1} f_l \otimes \cdots \otimes s^{-1} f_n \otimes b_{n+2} $ is equivalent to that $ T_n$ satisfies Equation~\eqref{Eq:Cyclic Leibniz identity for homotopy RB} for $l$.

	In conclusion,     $(\partial_{-1}B, \{m_n\}_{n\geqslant1}) $ is an $A_\infty$ algebra.
	\end{proof}
	\bigskip

\end{document}